\documentclass[11pt,reqno]{amsart}

\setlength{\topmargin}{-1cm}
\setlength{\oddsidemargin}{0.2cm}
\setlength{\evensidemargin}{0.2cm}
\setlength{\textwidth}{15.8cm}
\setlength{\textheight}{22.42cm}

\usepackage{amsthm}
\usepackage{amssymb}
\usepackage{mathrsfs}
\usepackage{amsmath}
\usepackage{latexsym}
\usepackage{graphicx,tikz}
\usepackage[all]{xy}
\usepackage{ytableau}
\usepackage{graphicx}
\usepackage{eucal}
\usepackage{enumerate}

\usepackage[hidelinks]{hyperref}


\usepackage{enumitem}


\newtheorem{thm}{Theorem}[section]
\newtheorem{prop}[thm]{Proposition}
\newtheorem{lemma}[thm]{Lemma}

\newtheorem{conjecture}[thm]{Conjecture}

\theoremstyle{definition}
\newtheorem{defn}[thm]{Definition}

\newtheorem{example}[thm]{Example}
\newtheorem{remark}[thm]{Remark}

\input{appendix-preamble}

\begin{document}

\title[Hodge--Tate hyperplane sections and semisimple quantum cohomology]{A-D-E diagrams, Hodge--Tate hyperplane sections and semisimple  quantum cohomology}

\author[S. Galkin]{Sergey Galkin}
\address{PUC-Rio, Departamento de Matem\'{a}tica, Rua Marqu\^{e}s de S\~{a}o Vicente 225, G\'{a}vea, Rio de Janeiro, Brazil}

\email{sergey@puc-rio.br}
\thanks{}
\author[N.C. Leung]{Naichung Conan Leung}
\address{The Institute of Mathematical Sciences and Department of Mathematics, The Chinese University
of Hong Kong, Shatin, Hong Kong}

\email{leung@math.cuhk.edu.hk}
\thanks{
 }

\author[C. Li]{Changzheng Li}
 \address{School of Mathematics, Sun Yat-sen University, Guangzhou 510275, P.R. China}
\email{lichangzh@mail.sysu.edu.cn}

\author[R. Xiong]{Rui Xiong}
\address{Department of Mathematics and Statistics, University of Ottawa, 150 Louis-Pasteur, Ottawa, ON, K1N 6N5, Canada}
\email{rxion043@uottawa.ca}

\makeatletter
\let\@wraptoccontribs\wraptoccontribs
\makeatother
\contrib[With an appendix by]{Pieter Belmans and Markus Reineke}

\begin{abstract}
It is known that the semisimplicity of quantum cohomology implies the vanishing of off-diagonal Hodge numbers (Hodge--Tateness).
We investigate which hyperplane sections of homogeneous varieties possess either of the two properties.
   We provide a new efficient criterion for non-semisimplicity of the small quantum cohomology ring of Fano manifolds 
that depends only on the Fano index and Betti numbers.
   We construct a bijection between Dynkin diagrams of types $A$, $D$ or $E$,
and complex Grassmannians with Hodge--Tate smooth hyperplane sections.
   By applying our criteria and using monodromy action, we completely characterize the semisimplicity of the small quantum cohomology of smooth hyperplane sections in the case of complex Grassmannians, and verify a conjecture of Benedetti and  Perrin in the case of (co) adjoint Grassmannians.
\end{abstract}

\maketitle

\section*{Highlights}
\begin{itemize}
\item
Hyperplane sections of complex Grassmannians $Gr(k,n)$ are Hodge--Tate
if and only if there is a Dynkin diagram of type $A_n$ ($k=1$), $D_n$ ($k=2$) or $E_n$ ($k=3$ and $n\leq 8$)
  obtained by adding a node adjacent to the Dynkin diagram of type $A_{n-1}$. Namely,
$$\text{\upshape (i)}
\begin{array}{c}
\stackrel{\stackrel{\displaystyle\bullet}{|}}
\circ\text{---}\circ\text{---}\cdots \text{---}\circ\\
\circ\,\,\text{---}\circ\text{---}\cdots \text{---}\circ
\end{array}\qquad
\text{\upshape (ii)}
\begin{array}{c}
\circ\text{---}
\stackrel{\stackrel{\displaystyle\bullet}{|}}\circ\text{---}\cdots \text{---}\circ\\
\circ\text{---}\circ\text{---}\cdots \text{---}\circ
\end{array}\qquad
\text{\upshape (iii)}
\begin{array}{c}
\circ\text{---}
\circ\text{---}
\stackrel{\stackrel{\displaystyle\bullet}{|}}\circ\text{---}\circ\text{---}\cdots\\
\circ\text{---}\circ\text{---}\circ\text{---}\circ\text{---}\cdots
\end{array}
$$

\item Quite often knowledge of Betti numbers and Fano index is sufficient
to witness non-semisimplicity of small quantum cohomology, even for Hodge-Tate Fano manifolds. Complete characterizations for the case of hyperplane sections of generalized Grassmannians are proposed in Conjecture \ref{conj: ssQHY}, with partial cases confirmed.
\item Quantum Lefschetz hyperplane principle admits user-friendly formulations,
      namely Proposition \ref{prop: QLP} after \cite{GaIr25},
      directly applicable by non-specialists in Gromov--Witten theory.
      We illustrate this advantage in the case of $Gr(3,8)$, an avatar of $E_8$.
\end{itemize}

\section{Introduction}
The  big quantum cohomology ring $BQH^*(X)$ of a Fano manifold $X$ encodes genus-zero Gromov--Witten invariants, and is canonically equipped with a Frobenius manifold structure. It is important to investigate the semisimplicity of $BQH^*(X)$. For instance by Givental's conjecture  \cite{Giv} proved by Teleman \cite{Tel}, all higher genus Gromov--Witten invariants of $X$   are determined by the genus-zero ones, provided that $BQH^*(X)$ is generically semisimple. The remarkable Dubrovin's conjecture \cite{Dub98} with clarification \cite{GMS15} on part 1, together with the refinement of its part 3 known as  Gamma conjecture II \cite{GGI} and independently formulated in \cite{Dub13, CDG}, also concerned with the semisimplicity of the big quantum cohomology.

The  (small)   quantum cohomology ring $QH^*(X)=(H^*(X)\otimes \mathbb{C}[\mathbf{q}], \star)$ is a deformation of the classical cohomology $H^*(X)=H^*(X, \mathbb{C})$ by incorporating genus-zero, three-point Gromov--Witten invariants.
It is relatively more accessible than $BQH^*(X)$, and if $QH^*(X)$ is semisimple at some specialization of the quantum variables $\mathbf{q}$, then $BQH^*(X)$ is generically semisimple. Some important classes of Fano manifolds have  generically semisimple small quantum cohomology, including    toric Fano manifolds \cite{Bat, OsTy},      complete flag manifolds  \cite{Kos} and  (co)minuscule Grassmannians \cite{CMP}.
Nevertheless, there are some  examples with nice geometry such as most of (co)adjoint Grassmannians \cite{ChPe, PeSm} that have non-semisimple small quantum cohomology, whilst having its big quantum cohomology generically semisimple.
   It is known that semisimplicity of either big or small quantum cohomology $QH^*(X)$ puts some constraints on the classical cohomology $H^*(X)$. The first one was given by Bayer and Manin,   and was strengthened by Hertling,  Manin and   Teleman.
\begin{prop}[\cite{BaMa, HMT}]\label{prop:hodge}
    The even part of big quantum cohomology $BQH^{\rm ev}(X)$ is generically semisimple only if $H^*(X)$ is of Hodge--Tate type, i.e. $H^*(X)=\bigoplus_p H^{p,p}(X)$.
\end{prop}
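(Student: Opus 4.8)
The plan is to transfer the semisimplicity hypothesis into a statement about the local structure of the quantum connection at a generic point and then read off the constraint on $H^*(X)$. The key point is that $BQH^{\rm ev}(X)$ carries more than a bare Frobenius structure: on the trivial bundle with fibre $H^*(X)$ over $BQH^{\rm ev}(X)\times\mathbb C^*_z$ one has the Dubrovin connection $\nabla_{z\partial_z}=z\partial_z+\tfrac1z(E\star_t)+\mathcal V$, the Poincar\'e pairing, and the rational lattice $H^*(X,\mathbb Q)$, refined by Iritani's $\widehat\Gamma$-integral structure. Near the large radius limit these data are an incarnation of the nilpotent orbit attached to the Hodge decomposition of $H^*(X)$, and by the $tt^*$-geometry of Sabbah and Hertling--Sevenheck they underlie a \emph{pure polarized} twistor/Hodge structure over a nonempty open locus $M^{\rm ss}$ of the base. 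Under this dictionary the assertion ``$H^*(X)=\bigoplus_p H^{p,p}(X)$'' says exactly that this Hodge structure is of Tate type.

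First I would pass to a generic massive point $t\in M^{\rm ss}$: there $(H^{\rm ev}(X),\star_t)\cong\mathbb C^{N}$ with $N=\dim H^{\rm ev}(X)$, so $E\star_t$ is regular semisimple with pairwise distinct eigenvalues $u_1,\dots,u_N$, and the irregular singularity of $\nabla_{z\partial_z}$ at $z=0$ is as nondegenerate as possible: its formal type is $\bigoplus_{i=1}^{N} e^{u_i/z}z^{\gamma_i}$ with unipotent Stokes structure. The heart of the argument, following \cite{HMT}, is that compatibility of the flat polarization and of the real structure with the eigendecomposition of $E\star_t$ forces each formal rank-one summand to carry a pure polarized Hodge structure of rank one; such a structure must be a Tate twist $\mathbb Q(-\ell_i)$, hence purely of type $(\ell_i,\ell_i)$. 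Since Hodge numbers are locally constant in a variation of Hodge structure and Tate-ness is a closed condition on the Hodge filtration, I would then propagate this from $M^{\rm ss}$ back to the large radius limit, concluding $h^{p,q}(X)=0$ for $p\neq q$, in particular $H^{\rm odd}(X)=0$; that is, $H^*(X)$ is Hodge--Tate.

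I expect the real obstacle to lie entirely outside the formal Frobenius-manifold bookkeeping, in two places. First, one must establish that the quantum $D$-module of the smooth projective variety $X$ genuinely underlies a \emph{pure} and \emph{polarized} twistor/Hodge structure over $M^{\rm ss}$; this rests on the Hodge theory of $X$ together with the Sabbah--Hertling--Sevenheck correspondence between (irregular) semisimple Frobenius manifolds and direct sums of rank-one objects. Second, one needs the rigidity input that a rank-one polarized Hodge structure is necessarily a Tate structure, applied summand by summand. Granting these two things, the remaining steps — the normal form of the connection at a massive point, the Stokes and monodromy accounting, and deformation invariance of Hodge numbers — are routine. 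A more hands-on route to the \emph{even} part of the statement, essentially Bayer--Manin's original argument \cite{BaMa}, replaces the above by a direct study of the spectrum of the grading operator $\mathcal V$ at a semisimple point, constrained through Dubrovin's central connection matrix, combined with the hard Lefschetz $\mathfrak{sl}_2$-action on $H^*(X)$, which already rules out off-diagonal classes in $H^{\rm ev}(X)$.
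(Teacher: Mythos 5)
The paper does not prove this proposition: it is quoted as a black box from \cite{BaMa, HMT}, so there is no internal argument to compare against, and what you are really being asked to reconstruct is the Hertling--Manin--Teleman theorem. Your outline is in the right spirit and you correctly locate the two hard inputs (a pure polarized twistor/ncHodge structure over the semisimple locus; rigidity of rank-one polarized $\mathbb{Q}$-Hodge structures), but two of the steps you call ``routine'' are exactly where the proof needs work.

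First, ``Hodge numbers are locally constant, propagate from $M^{\rm ss}$ back to the large radius limit'' is not an instance of deformation invariance: the large radius limit is a boundary point not in $M^{\rm ss}$, the Frobenius structure degenerates there, and what one compares with the classical Hodge filtration of $X$ is a \emph{limit} mixed Hodge structure, not a member of a variation over $M^{\rm ss}$. The actual mechanism in \cite{HMT} is a constraint on the spectral numbers of the second structure connection (the eigenvalues of the grading operator $\mu$, which encode $\tfrac{p-q}{2}$): tameness at a semisimple point forces the spectrum to collapse, and only then does one invoke the relation between those spectral numbers and $h^{p,q}(X)$ at the boundary. Presenting this as a continuity-of-Hodge-numbers argument glosses over the degeneration.

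Second, the hypothesis concerns only $BQH^{\rm ev}(X)$, yet the conclusion includes $H^{\rm odd}(X)=0$; your ``in particular $H^{\rm odd}(X)=0$'' uses the conclusion rather than derives it. On the odd part the Poincar\'e pairing is skew-symmetric, so there is no self-dual rank-one summand and the ``rank-one polarized is Tate'' rigidity does not even apply verbatim; one has to argue separately, from the module structure of $H^{\rm odd}(X)$ over the semisimple even part together with the spectral-number constraint, that the odd part vanishes. Your closing paragraph sketching the direct Bayer--Manin route through the central connection matrix and hard Lefschetz is a cleaner way to get the even-degree statement, and combined with a separate treatment of the odd part would close the argument.
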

\noindent There is another criterion for generic semi-simplicity of small quantum cohomology given by Chaput and Perrin \cite[Theorem 4]{ChPe} in terms of the degree of defining equations for a presentation of $H^*(X)$ when $X$ is of Picard number one.

Recall that the Fano index of $X$ is defined by
$$r=r_X := \max\{k\in \mathbb{Z}\mid {\frac{c_1(X)}{k}}\in H^2(X, \mathbb{Z})\}.$$
Define \emph{index-periodic even Betti numbers}
$$\tilde{b}_{\bar i} = \tilde{b}_{\bar i}(X) := \sum_{j\equiv i\bmod r_X} b_{2j}(X),$$
with indices $\bar i$ running over residue classes modulo $r_X$. As a very simple observation, we obtain the following theorem.
\begin{thm}\label{mainthm:ss}
The even quantum cohomology $QH^{\rm ev}(X)$ is generically semisimple only if both of the following hold.
\begin{enumerate}[label=\rm(\arabic*)]
  \item $\tilde{b}_{\bar i}(X) \leq \tilde{b}_{\overline{di}}(X)$ for all integer $i$ and $d$,
  \item $\tilde{b}_{\bar i}(X) = \tilde{b}_{-\bar i}(X)$ for all integer $i$.
 \end{enumerate}
\end{thm}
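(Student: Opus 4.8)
The plan is to exploit the residual finite grading that small quantum multiplication retains after the quantum parameters are specialized. Put $m:=r_X$ and write $c_1(X)=m\,h$ with $h\in H^2(X,\mathbb{Z})$. For every effective curve class $d$ one has $\langle c_1(X),d\rangle\in m\mathbb{Z}$, so the dimension axiom forces each Gromov--Witten term $\langle\alpha,\beta,\cdot\rangle_{0,3,d}\,\mathbf q^{d}$ occurring in $\alpha\star\beta$ to lie in cohomological degree $\equiv\deg\alpha+\deg\beta\pmod{2m}$. Equivalently: let $\mu_m=\langle\zeta\rangle$ act $\mathbb{C}[\mathbf q]$-linearly on $QH^{\rm ev}(X)$ by $\zeta\cdot\alpha=\zeta^{\,j}\alpha$ for $\alpha\in H^{2j}(X)$ and trivially on the quantum variables; the computation just made shows this is an action by $\mathbb{C}[\mathbf q]$-algebra automorphisms, and the $\zeta^{i}$-eigenspace of the generator is $\bigoplus_{j\equiv i\,(m)}H^{2j}(X)\otimes\mathbb{C}[\mathbf q]$, of rank $\tilde b_{\bar i}(X)$. (That $QH^{\rm ev}(X)$ is a subring at all is the parity half of the same dimension axiom.)

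Next I would pass to a specialization. Let $A$ be $QH^{\rm ev}(X)$ specialized at a value of $\mathbf q$ for which it is semisimple, which exists by hypothesis. Since the $\mu_m$-action fixes each quantum variable, it descends to $A$, a semisimple commutative $\mathbb{C}$-algebra; hence $A\cong\mathbb{C}^{N}$ with $N=\dim_{\mathbb{C}}H^{\rm ev}(X)$. Every $\mathbb{C}$-algebra automorphism of $\mathbb{C}^{N}$ permutes the $N$ primitive idempotents, so the generator $\sigma$ of $\mu_m$ acts on them as a permutation $\pi$ with $\pi^{m}=\mathrm{id}$; for a divisor $\ell\mid m$ let $n_\ell\ge 0$ be the number of $\ell$-cycles of $\pi$.

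The remainder is the elementary representation theory of a cyclic group on a permutation module. A single $\ell$-cycle contributes the eigenvalue $\zeta^{i}$ to $\sigma$ with multiplicity $1$ precisely when $(\zeta^{i})^{\ell}=1$, i.e. when $(m/\ell)\mid i$, and with multiplicity $0$ otherwise; comparing with the eigenspace description of the first paragraph,
\[ \tilde b_{\bar i}(X)\;=\;\sum_{\substack{\ell\mid m\\ (m/\ell)\mid i}}n_\ell\;=\;\sum_{\substack{\ell\mid m\\ (m/g)\mid\ell}}n_\ell,\qquad g:=\gcd(i,m). \]
So $\tilde b_{\bar i}(X)$ depends on $i$ only through $g=\gcd(i,m)$, and it is monotone along divisibility of $g$ (enlarging $g$ only enlarges the index set $\{\ell\mid m:(m/g)\mid\ell\}$). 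Statement (2) follows at once from $\gcd(i,m)=\gcd(-i,m)$. For statement (1), note $\gcd(i,m)\mid\gcd(di,m)$ for every integer $d$ (the left-hand side divides both $di$ and $m$); writing $g=\gcd(i,m)$, $g'=\gcd(di,m)$ we get $(m/g')\mid(m/g)$, whence $\{\ell\mid m:(m/g)\mid\ell\}\subseteq\{\ell\mid m:(m/g')\mid\ell\}$, and the displayed formula together with $n_\ell\ge0$ yields $\tilde b_{\bar i}(X)\le\tilde b_{\overline{di}}(X)$.

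The only point needing real care is the reduction in the second paragraph: one must be sure the $\mathbb{Z}/m$-grading --- equivalently the $\mu_m$-action --- genuinely survives specialization and that its generator really permutes the idempotents of the semisimple fibre. Both hold exactly because the action is $\mathbb{C}[\mathbf q]$-linear and trivial on the quantum variables, so there is no serious obstacle; one could just as well argue over $\overline{\mathbb{C}(\mathbf q)}$ instead of over a closed point.
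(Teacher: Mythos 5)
Your proof is correct, and it takes a genuinely different route from the paper's. Both arguments begin with the same observation: after specializing $\mathbf q$ to a point where the quantum product is semisimple, the $\mathbb{Z}$-grading of $QH^{\rm ev}(X)$ collapses to a residual $\mathbb{Z}_m$-grading (with $m=r_X$) on the finite-dimensional $\mathbb{C}$-algebra, precisely because every $\deg q_i=\int_{[C_i]}c_1(X)$ is a multiple of $m$. From there the two proofs diverge. The paper applies Lemma~\ref{lemma: key}: if $\dim\mathcal A^{\bar\chi}>\dim\mathcal A^{\overline{d\chi}}$ one can solve a system of $\dim\mathcal A^{\overline{d\chi}}$ degree-$d$ homogeneous polynomials in $\dim\mathcal A^{\bar\chi}$ unknowns to produce a nonzero $\varepsilon$ with $\varepsilon^{\star d}=0$, and part~(2) is then the case $d=r-1$. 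You instead package the grading as a $\mu_m$-action by algebra automorphisms, invoke the fact that an automorphism of $\mathbb{C}^N$ must permute its primitive idempotents, and read the index-periodic Betti numbers off the cycle type of that permutation, obtaining the explicit identity
$\tilde b_{\bar i}=\sum_{\ell\mid m,\ (m/\gcd(i,m))\mid\ell}n_\ell$ with all $n_\ell\ge 0$. This is in fact a \emph{stronger} conclusion than the theorem's parts~(1)--(2): by M\"obius inversion it is a positivity constraint on the Betti numbers, not merely a family of inequalities. The trade-off is that your route leans on the structure theorem for finite-dimensional semisimple commutative $\mathbb{C}$-algebras and a bit of cyclic-group representation theory, whereas the paper's Lemma~\ref{lemma: key} is a one-line projective-dimension count that requires nothing beyond the existence of common zeros of too few homogeneous polynomials, and in particular applies verbatim to any graded commutative algebra without idempotents being identified. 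Your handling of the specialization (descent of the $\mu_m$-action because it fixes the quantum variables, or equivalently arguing over $\overline{\mathbb{C}(\mathbf q)}$) is fine.
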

\noindent Here part (2)  is a direct consequence of part (1) by taking $d=r-1$.  We specify this property  as it is already very useful in many cases. For instance, we consider a generalized Grassmannian $G/P_k$, where $G$ is a connected complex simple Lie group and $P_k$ is the maximal parabolic subgroup that corresponds to the complement of the $k$-th simple root in the base of simple roots. Here we follow the label of the Dynkin diagram for $G$ as in   \cite{Bour}, see also \cite{Bel}.
Therein we can see that 9 (resp. 14) among all the 27 Grassmannians of exceptional Lie type   have semisimple (resp. non-semisimple)  small quantum cohomology. As a first application,
we will reprove in \textbf{Theorem \ref{thm:GrEF}} that    the non-semisimplicity of all the 14 known     cases but $E_8/P_4$   follow directly from Theorem \ref{mainthm:ss}  (2).

{
A second application is given in Appendix~\ref{sec:appendix}, by Pieter Belmans and Markus Reineke,
where it is shown in Theorem~\ref{theorem:non-semisimple}
that Theorem~\ref{mainthm:ss} is also strong enough to deduce the non-semisimplicity
of a certain infinite family of Kronecker moduli.
These are instances of moduli spaces of stable quiver representations,
which for other choices of the parameters includes the usual Grassmannians,
which have semisimple quantum cohomology.
}

As one main aim of this paper, we investigate the   semisimplicity of $QH^*(Y)$ for a smooth hyperplane section $Y$ of a generalized Grassmannian $G/P_k$   (with respect to its minimal
embedding).
It is natural to  start with  $G$   of type $A$, i.e. the complex Grassmannian $A_{n-1}/P_k=Gr(k, n)=\{V\leq\mathbb{C}^n\mid \dim V=k\}$.  When  $k=1$, $Y=\mathbb{P}^{n-2}$ and   $QH^*(Y)$ is generically semisimple.
When $k=2$ and  $n=2m$ with $m>1$,  $Y\simeq C_n/P_2$   and $QH^*(Y)$ is   non-semisimple \cite{ChPe}.   When    $k=2$ and  $n=2m+1$ with $m>1$, $Y$ is a quasi-homogeneous variety and  $QH^*(Y)$ is   generically semisimple \cite{Pec, Per}.
When  $k>2$ and $n>3k$,  $H^*(Y)$ is not of Hodge--Tate type by  \cite[Theorem 3]{BFM21}.
As one main result of this paper, we provide the following complete characterization, which is  a combination of  \textbf{Theorem \ref{mainthm:HodgehyperplaneGr}} and \textbf{Theorem \ref{mainthm:QHhyperplaneGr}}.

\begin{thm}\label{mainthm:ttypeA}
Let $Y$ be a smooth hyperplane section of $Gr(k, n)\simeq Gr(n-k, n)$ where $n\geq 2k$.
\begin{enumerate}
    \item[{\upshape (1)}]  $H^*(Y)$ is of Hodge--Tate type if and only if  one of the following holds:
    $$ \qquad {\rm (i) }\,\, k=1; \quad {\rm (ii) } \,\, k=2;\quad    {\rm {(iii)} }\,\, k=3 \mbox{ and } n\in \{6, 7, 8\}.$$
 \item[{\upshape (2)}]
  $QH^*(Y)$ is generically semisimple if and only if one of the following holds:
$$\qquad {\rm (i) }\,\, k=1; \quad {\rm (ii) } \,\, k=2 \mbox{ and  } n\in \{4\}\cup \{2m+1\mid m\in \mathbb{Z}_{\geq 2}\}; \quad {\rm (iii) } \,\, k=3  \mbox{ and } n \in \{7, 8\}.$$
\end{enumerate}
\end{thm}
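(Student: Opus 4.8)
The plan is to treat the two assertions separately, since part (1) is a purely Hodge-theoretic statement about the fixed cohomology of a generic hyperplane section, while part (2) layers a quantum-cohomology analysis on top of it.

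For part (1), I would first recall that by the Lefschetz hyperplane theorem the cohomology of $Y$ in degrees below the middle coincides with that of $Gr(k,n)$, which is always of Hodge--Tate type; the only possible obstruction to Hodge--Tateness of $Y$ comes from the \emph{primitive} (vanishing) cohomology in the middle degree $\dim Y$. So the heart of the matter is to compute, for the smooth hyperplane section $Y \subset Gr(k,n)$ in its Plücker embedding, the Hodge numbers of the primitive part $H^{\dim Y}_{\mathrm{prim}}(Y)$ and decide when they are concentrated on the diagonal. For $k=1$ this is $\mathbb{P}^{n-2}$, manifestly Hodge--Tate; for $k=2$ one invokes the classical isomorphism $Y \cong C_n/P_2$ when $n$ is even (a smooth hyperplane section of $Gr(2,n)$ is a symplectic/orthogonal Grassmannian or closely related homogeneous space) and the quasi-homogeneous description for $n$ odd, in either case a rational homogeneous-type space whose cohomology is algebraic. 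For $k=3$ the interesting range is exactly $n\in\{6,7,8\}$, and here I would cite and apply \cite[Theorem 3]{BFM21}, which computes the Hodge numbers of hyperplane sections of $Gr(k,n)$ and shows non-Hodge--Tateness precisely when $k>2$ and $n>3k$ — pinning down $n\le 3k=9$, hence $n\in\{6,7,8,9\}$, and then ruling out $n=9$ (where $\dim Gr(3,9)$ is large enough that the primitive middle cohomology genuinely acquires an off-diagonal $(p,q)$-class) by the same Hodge-number count. The $E$-diagram picture from the Highlights is the organizing heuristic: $Gr(3,6),Gr(3,7),Gr(3,8)$ are the "$E_6,E_7,E_8$" members, and $Gr(3,9)$ would be the affine $\widetilde E_8$ that fails. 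The main obstacle in part (1) is the explicit Hodge-number bookkeeping for $Gr(3,n)$; I expect to reduce it to \cite{BFM21} rather than redo it.

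For part (2), the necessity direction is where the new criterion earns its keep. By Proposition \ref{prop:hodge}, generic semisimplicity of $QH^{\mathrm{ev}}(Y)$ forces $H^*(Y)$ Hodge--Tate, so the candidates are immediately cut down to the list in part (1): $k=1$; $k=2$; and $k=3$ with $n\in\{6,7,8\}$. Within that list I would eliminate the non-semisimple cases using Theorem \ref{mainthm:ss}: compute the Fano index $r_Y$ of the hyperplane section (via adjunction, $r_Y = r_{Gr(k,n)} - 1 = n-1$ when $k$ is coprime-free in the relevant sense, more precisely $r_Y$ is $n-1$ except in low cases) and the even Betti numbers $b_{2j}(Y)$ (again Lefschetz below the middle, plus the primitive middle Betti number from part (1)), assemble the index-periodic Betti numbers $\tilde b_{\bar i}(Y)$, and check the inequality $\tilde b_{\bar i} \le \tilde b_{\overline{di}}$ or the symmetry $\tilde b_{\bar i} = \tilde b_{-\bar i}$. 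This should knock out $k=2$ with $n=2m$, $m>1$ (consistent with the known non-semisimplicity of $C_n/P_2$), leaving $n=4$ among even $n$, and should knock out $k=3$, $n=6$, leaving only $n=7,8$. For the surviving cases one needs genuine semisimplicity \emph{proofs}: $k=1$ is classical ($\mathbb{P}^{n-2}$); $k=2$, $n=4$ reduces to a quadric or small homogeneous space with known semisimple $QH^*$; $k=2$, $n$ odd is the quasi-homogeneous case handled by \cite{Pec, Per}; and $k=3$, $n\in\{7,8\}$ requires the quantum Lefschetz computation. For the latter I would use the user-friendly quantum Lefschetz formulation Proposition \ref{prop: QLP} (after \cite{GaIr25}) to write down the small quantum cohomology of $Y$ from that of $Gr(3,n)$, then exhibit a specialization of $\mathbf{q}$ at which the quantum product has distinct eigenvalues — equivalently, at which the quantum multiplication by $c_1(Y)$ is a regular (cyclic) operator — possibly aided by the monodromy/deck-transformation argument mentioned in the abstract to propagate semisimplicity across the relevant locus.

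The main obstacle is the explicit semisimplicity verification for $Y \subset Gr(3,8)$, the "$E_8$ avatar": the quantum Lefschetz relation gives a presentation of $QH^*(Y)$ but checking that the resulting Frobenius algebra is generically semisimple amounts to showing a certain characteristic-polynomial discriminant is not identically zero, which for a space of this Betti size is a nontrivial finite computation. I expect to organize it by computing the spectrum of quantum multiplication by the hyperplane class at a convenient value of $\mathbf q$ (e.g. specializing to roots of unity suggested by the $E_8$ structure), and to use the monodromy action on the vanishing cohomology to reduce the number of cases that must be checked by hand.
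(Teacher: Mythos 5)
Your outline of Part (2) tracks the paper's proof closely: reduce to the Hodge--Tate list using Proposition \ref{prop:hodge}, eliminate $k=2$ with $n=2m>4$ and $k=3,n=6$ via the index-periodic Betti criterion of Theorem \ref{mainthm:ss}, quote the known results for $k=1$, $k=2$, and then do the genuine work for $k=3$, $n\in\{7,8\}$ via a quantum Lefschetz-type transfer plus, for $n=8$, the monodromy action on the one-dimensional primitive class. That is exactly the paper's architecture (Propositions \ref{prop: quanPieriY}, \ref{prop:conjpart2true}, Lemma \ref{lemma: GrYchp}, and Lemmas \ref{lemma: monobeta} and the proof of Theorem \ref{mainthm:QHhyperplaneGr}), so I have no complaints there beyond the obvious point that the ``nontrivial finite computation'' you flag for $Gr(3,8)$ is most of the content.

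Part (1) has a genuine gap. You paraphrase \cite[Theorem~3]{BFM21} as showing non-Hodge--Tateness ``\emph{precisely} when $k>2$ and $n>3k$,'' and then only examine the residual $k=3$ window $n\in\{6,7,8,9\}$. But the paper only invokes BFM21 in the one-directional form ``$k>2$ and $n>3k$ implies not Hodge--Tate,'' and that direction is not sharp enough: $Y\subset Gr(4,8)$ satisfies $n=8\le 3k=12$ yet is \emph{not} Hodge--Tate (the paper's explicit $\chi_y$-computation gives $h^{5,11}(Y)=3$). So the ``precisely'' is false, and your plan leaves unaddressed the entire strip $k\ge 4$, $2k\le n\le 3k$ --- not merely the boundary case $(4,8)$ but also $(4,9),\dots,(4,12)$, $(5,10),\dots,(5,15)$, etc. The paper closes this by proving a genuinely sharper statement: via Snow's combinatorial description of $H^j(X,\Omega^p(\ell))$ (Proposition \ref{prop: Snow}), a core-partition lemma (Lemma \ref{lem:comblemm}), and a Griffiths-style pole-order argument (Proposition \ref{prop: nonhodge}), it shows that $k(n-k)>2n$ already forces $h^{\dim X -n,\,n-1}(Y)=1\neq 0$. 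The inequality $k(n-k)>2n$ (characterized in Lemma \ref{lemma: nk}) leaves only the five cases $(3,6),(3,7),(3,8),(3,9),(4,8)$, which the paper then settles by direct $\chi_y$-genus computations. Your proposal needs either the full strength of this argument or some other device to handle $k\ge 4$; simply invoking BFM21 will not do it, and in particular it will incorrectly classify $(4,8)$ as Hodge--Tate. You should also be explicit about how you intend to \emph{verify} Hodge--Tateness for $k=3$, $n\in\{6,7,8\}$ (the paper computes Hodge diamonds from $\chi_y(Y)$); your proposal gestures at a ``Hodge-number count'' only for ruling out $n=9$.
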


\begin{remark} \label{remark-ade}
The classification in Theorem \ref{mainthm:ttypeA} (1)
is in bijection with ADE Dynkin diagrams in the sense that the diagram obtained by adding a node adjacent to the $k$-th node of type $A_{n-1}$ is a simply-laced Dynkin diagram.
Moreover, by \cite{BFM, SaKi,Sco06}, the following are all equivalent:
\begin{enumerate}
      \item $H^*(Y)$ is of Hodge--Tate type.
      \item  $Y$ is locally rigid.
      \item The fundamental representation $\Lambda^k\mathbb{C}^n$ of $GL_n$ has a Zariski open orbit.
     \item The cluster algebra of $A_{n-1}/P_k$ is of finite type.
\end{enumerate}
However, we do not have a conceptual explanation for the equivalence.
\end{remark}

Besides the cases in Theorem \ref{mainthm:ttypeA} (1),
(co)adjoint Grassmannians $G/P_k$ (in Table \ref{tab: coadj})  provide another family of generalized Grassmannians such that the cohomology of a smooth hyperplane section $Y$ is of Hodge--Tate type, as shown by Benedetti and  Perrin \cite{BePe}.
\begin{table}[hhhhhh]\label{tab: coadj}
 $$\def\c#1#2{\begin{array}{@{}c@{}}#1\\#2\end{array}}
            \begin{array}{|c|c|c|c|c|c|c|c|c|}\hline
            \text{type} &B & C & D & E_6 & E_7& E_8 & F_4 & G_2\\\hline\hline
            \c{\text{adjoint}}{\text{coadjoint}}&
            \c{B_n/P_2}{B_n/P_1} &
            \c{C_n/P_1}{C_n/P_2} &
            D_n/P_2 &
            E_6/P_2 &
            E_7/P_1 & E_8/P_8 &
            \c{F_4/P_1}{F_4/P_4} &
            \c{G_2/P_2}{G_2/P_1}\\\hline
            \end{array}$$
   \caption{(Co)Adjoint Grassmannians $G/P_k$}
   \vspace{-0.5cm}
\end{table}
Such  hyperplane sections $Y$  admit a uniform characterization.   By Conjecture 1.10 (2) in loc. cit.,  $QH^*(Y)$ is generically semisimple if and only if so is $QH^*(G/P_k)$. Now this conjecture holds true by combining the study of the semisimplicity of $QH^*(G/P_k)$ in \cite{ChPe, PeSm} and the following theorem.
\begin{thm}\label{mainthm: adcoad}
Let $Y$ be a smooth hyperplane section of $G/P_k$ of (co)adjoint  type.
Then  $QH^*(Y)$ is generically  semi-simple
if and only if
 $G/P_k$ is adjoint and not coadjoint, i.e. $G/P_k\in \{B_n/P_2, C_n/P_1, F_4/P_1, G_2/P_2\}$.
 \end{thm}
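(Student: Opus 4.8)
The plan is to reduce everything to two ingredients: the known (non-)semisimplicity of $QH^*(G/P_k)$ for the ambient (co)adjoint Grassmannians, and a ``quantum Lefschetz'' comparison between $QH^*(G/P_k)$ and $QH^*(Y)$ for the hyperplane section. Since $Y$ is Hodge--Tate by \cite{BePe} in all (co)adjoint cases, we may use Theorem \ref{mainthm:ss} freely on $Y$: it gives a quick necessary condition phrased purely in terms of the Fano index $r_Y$ and the index-periodic Betti numbers $\tilde b_{\bar i}(Y)$, and I expect it to immediately kill semisimplicity on the coadjoint side. Concretely, for $Y\subset G/P_k$ coadjoint one computes $r_Y$ (it drops by one from $r_{G/P_k}$ unless the section is large-index, which for coadjoint types it is not) and checks condition (1) or its consequence (2), $\tilde b_{\bar i}(Y)=\tilde b_{-\bar i}(Y)$; for the coadjoint Grassmannians the Poincaré polynomial is known type-by-type (these are the varieties whose cohomology carries the well-studied ``subadjoint'' stratification), and the index-periodic Betti vector is visibly asymmetric or has a violated monotonicity $\tilde b_{\bar i}\le\tilde b_{\overline{di}}$. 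This handles the ``only if'' direction: if $G/P_k$ is not adjoint-but-not-coadjoint, i.e. $G/P_k$ is one of the coadjoint cases $B_n/P_1, C_n/P_2, D_n/P_2, E_6/P_2, E_7/P_1, E_8/P_8, F_4/P_4, G_2/P_1$, then $QH^*(Y)$ is not generically semisimple. (For $D_n/P_2$, which is self-(co)adjoint, and for the coadjoint $F_4/P_4$, $E_6/P_2$ etc., one checks the same numerical obstruction; in the few residual cases not caught by the Betti/index criterion one falls back on the known non-semisimplicity of $QH^*(G/P_k)$ itself together with the Lefschetz comparison below, run in the reverse direction.)

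For the ``if'' direction one must show $QH^*(Y)$ is generically semisimple when $G/P_k\in\{B_n/P_2, C_n/P_1, F_4/P_1, G_2/P_2\}$. Here $QH^*(G/P_k)$ is known to be generically semisimple (\cite{ChPe, PeSm}), and the strategy is to transport semisimplicity to the hyperplane section via the quantum Lefschetz hyperplane principle in the user-friendly form of Proposition \ref{prop: QLP} (after \cite{GaIr25}). The point is that for a Fano hyperplane section $Y\subset Z:=G/P_k$ of index $r_Y\ge 2$, the small quantum cohomology $QH^*(Y)$ (or at least the relevant ``ambient'' part $H^*_{\mathrm{amb}}(Y)$, on which the Lefschetz restriction $H^*(Z)\to H^*(Y)$ lands) is governed by the quantum $D$-module of $Z$ twisted by the hyperplane class, and one extracts from this a deformation of the multiplication on the ambient cohomology that is a specialization/limit of the semisimple quantum product on $Z$. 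Generic semisimplicity is an open condition on the Frobenius manifold, so it is inherited along such a family provided the fibre over $Y$ is not contained in the non-semisimple locus — and the Betti/index check of Theorem \ref{mainthm:ss} for these four cases confirms there is no numerical obstruction, so there is no reason for degeneration. One still has to deal with the primitive (non-ambient) cohomology of $Y$: for the adjoint cases $B_n/P_2, C_n/P_1, F_4/P_1, G_2/P_2$ the hyperplane section is itself a homogeneous or quasi-homogeneous variety (these are exactly the ``nice'' adjoint sections), so $H^*(Y)$ is entirely ambient and the issue disappears, or else one invokes the monodromy action on the primitive part to see it contributes a single extra semisimple summand.

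The main obstacle I expect is the quantum Lefschetz step in the ``if'' direction: Proposition \ref{prop: QLP} controls the quantum $D$-module, hence the $J$-function and the quantum product restricted to the ambient subalgebra, but semisimplicity is a statement about the \emph{full} quantum cohomology Frobenius algebra, and one must either (a) check that the primitive cohomology is absent or splits off cleanly, or (b) upgrade the ambient computation to a statement about all of $QH^*(Y)$. For the four adjoint Grassmannians in question this should be tractable because their smooth hyperplane sections are very special (e.g.\ $C_n/P_1=\mathbb{P}^{2n-1}$ has section a smooth odd-dimensional quadric; $B_n/P_2$, $F_4/P_1$, $G_2/P_2$ have sections that are again (quasi-)homogeneous Fano varieties of small index whose $QH^*$ can be pinned down directly), so in the end the proof reduces to: (i) cite known semisimplicity of $QH^*(G/P_k)$; (ii) identify $Y$ explicitly in each adjoint case and either quote its $QH^*$ or run the quantum Lefschetz comparison; (iii) apply Theorem \ref{mainthm:ss} to rule out all coadjoint cases. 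The cleanest write-up handles the eight coadjoint (and self-(co)adjoint) varieties by the numerical criterion and the four genuinely-adjoint varieties by explicit identification plus Proposition \ref{prop: QLP}.
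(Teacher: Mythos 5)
The main gap is your treatment of $D_n/P_2$. You propose to handle all the coadjoint (and self-(co)adjoint) cases by the numerical Betti/index criterion of Theorem~\ref{mainthm:ss}, falling back on ``quantum Lefschetz run in reverse'' for residual cases. Neither branch works for $D_n/P_2$. The Betti/index test fails here: for the self-(co)adjoint, simply-laced $D_n/P_2$ the section $Y$ has $\dim Y = 2r_Y$ and the index-periodic Betti vector is symmetric (i.e.\ $\tilde b_{\bar i}(Y)=\tilde b_{-\bar i}(Y)$ for all $i$), so Theorem~\ref{mainthm:ss}~(2) gives nothing; the paper says explicitly that the type-$C$ argument ``does not work for type $D$.'' And the fallback is not available: Proposition~\ref{prop: QLP} gives a ring homomorphism $QH^*(X)\to QH^*(Y)$, which does not propagate \emph{non}-semisimplicity of $X$ to $Y$ --- the image of a non-semisimple ring can be semisimple. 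The paper's actual argument for $D_n/P_2$ (Theorem~\ref{thm: type D}) is qualitatively different: it realizes $Y$ as a fiber of a universal family over $\mathfrak g^{\mathrm{rs}}$, shows via Fourier transform of perverse sheaves and Springer theory that the monodromy factors through $W$ and decomposes $H^*(Y)$ as $H^*(Y)_{\mathrm{inv}}\oplus H^*(Y)_{\mathrm{std}}$ with $H^*(Y)_{\mathrm{std}}\cong\chi_{\mathrm{std}}$, then uses $W$-invariance of the quantum triple product plus Lemma~\ref{len:Symstd} to produce an auxiliary $\mathbb{Z}_2$-graded subalgebra $\mathcal B$ with $\dim\mathcal B^{\bar 0}<\dim\mathcal B^{\bar 1}$, and finally applies Lemma~\ref{lemma: key}. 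Note this is the \emph{opposite} of your sketch, where you enlist the monodromy only to argue the primitive part is harmless; here the monodromy is precisely what forces the nilpotent.

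Two smaller points. First, the paper does not re-derive semisimplicity on the adjoint side via quantum Lefschetz; it simply cites the classification of Benedetti--Perrin \cite[Theorem 1.11]{BePe} (together with \cite{Pec,Per,ChPe,PeSm}), which already settles every (co)adjoint case except $C_n/P_2$ and $D_n/P_2$. The entire novelty of the theorem is filling in those two. Your quantum-Lefschetz transport argument for the ``if'' direction is not rigorous as stated: ``generic semisimplicity is open'' does not tell you the specific Frobenius-manifold point determined by $Y$ avoids the bad locus, and passing the Betti/index test is only a necessary condition, not a certificate of semisimplicity. Second, your $C_n/P_2$ computation matches the paper's Theorem~\ref{thm: type C} in spirit (the $\dim\mathcal A^{\bar1}(Y)\neq\dim\mathcal A^{-\bar1}(Y)$ count), and your remark that the same numerics kill the non-simply-laced coadjoint types $B_n/P_1$, $F_4/P_4$, $G_2/P_1$ is correct and also noted in the paper, so that part of your ``only if'' side is sound.
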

\noindent

The semisimplicity of  $QH^*(Y)$ has been precisely determined   except for $Y$ in   $C_n/P_2$ or $D_n/P_2$ (see \cite[Theorem 1.11]{BePe}).
As another main result of this paper, we obtain the following, which  is a combination of  \textbf{Theorem  \ref{thm: type C}} and \textbf{Theorem \ref{thm: type D}} and fills in the last piece of the proof of Theorem \ref{mainthm: adcoad}.
\begin{thm}\label{mainthm:typeCD}
Let $Y$ be a smooth hyperplane section of $C_n/P_2$ or $D_n/P_2$.
 Then $QH^*(Y)$ is not semi-simple.
\end{thm}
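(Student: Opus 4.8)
The plan is to show that the elementary numerical obstruction of Theorem~\ref{mainthm:ss} already forbids semisimplicity, the single substantive input being that $C_n/P_2$ and $D_n/P_2$ are \emph{non-defective}. Throughout, $Y$ is a smooth hyperplane section in the minimal embedding; one must exclude the low-rank coincidences $C_2/P_2\cong\mathbb Q^3$ and $D_3/P_2\cong\mathbb Q^4$, whose hyperplane sections $\mathbb Q^2$ and $\mathbb Q^3$ do have semisimple quantum cohomology, so assume $n\ge3$ in type $C$ and $n\ge4$ in type $D$.

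First I would record the invariants. As $\dim Y\ge3$, the Grothendieck--Lefschetz theorem gives $\mathrm{Pic}(Y)=\mathbb Z$, and adjunction yields $r_Y=r_X-1$; since $r_{C_n/P_2}=2n-1$ and $r_{D_n/P_2}=2n-2$ this is $r_Y=2n-2$ in type $C$ and $r_Y=2n-3$ in type $D$, and in both cases $d:=\dim_{\mathbb C}Y=2r_Y-2$ (namely $d=4n-6$, resp.\ $d=4n-8$), which is even. By the Lefschetz hyperplane theorem, $b_k(Y)=b_k(X)$ for $k\ne d$, while $b_d(Y)=b_d(X)+p$ with $p=\dim H^{d}_{\rm van}(Y)$; so the Poincaré polynomial of $Y$ is determined by the classical one of $X$ together with the single integer $p$. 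The crux is that $p\ge1$: neither $C_n/P_2$ nor $D_n/P_2$ appears in Ein's classification of dual-defective varieties, so $X^\vee$ is a hypersurface, a generic Lefschetz pencil acquires $\deg X^\vee\ge1$ ordinary nodes, and each node contributes a vanishing cycle $\delta$ with $\delta^2=\pm2\ne0$ ($d$ even), so $H^d_{\rm van}(Y)\ne0$. (Equivalently, $p\ge1$ is visible from the cohomology of $Y$ computed in \cite{BePe}.)

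Next I would run the index-periodic Betti count. From $d=2r_Y-2$ we get $d\equiv-2\pmod{r_Y}$, so condition~(2) of Theorem~\ref{mainthm:ss} applied to $Y$ reads $\tilde b_{\bar i}(Y)=\tilde b_{\overline{i-2}}(Y)$ for all $i$. Moreover $d/2=r_Y-1\equiv-1\pmod{r_Y}$ while $d/2+r_Y>d$, so the middle Betti number is the only term contributing to $\tilde b_{\overline{-1}}(Y)$; hence $\tilde b_{\overline{-1}}(Y)=b_d(Y)=b_d(X)+p$. A short computation with the Poincaré polynomial gives $\tilde b_{\overline{-3}}(Y)=b_d(X)$ (a sum of off-middle Betti numbers of $Y$; for instance in type $C$ it equals $b_{4n-10}(X)+b_2(Y)=(n-2)+1=b_d(X)$). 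Taking $i\equiv-1$ in condition~(2) would therefore force $b_d(X)+p=b_d(X)$, i.e.\ $p=0$, contradicting $p\ge1$. Hence $QH^{\rm ev}(Y)$ is not generically semisimple, and since the only conceivable odd cohomology of $Y$ would lie in the even degree $d$, there is none, so $QH^*(Y)=QH^{\rm ev}(Y)$ is not semisimple.

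The main obstacle is the geometric input $p\ge1$, i.e.\ the non-defectivity of $C_n/P_2$ and $D_n/P_2$ — the one step where a structural result (Ein's classification, or \cite{BePe}) is unavoidable — together with the routine bookkeeping confirming that the sole discrepancy in condition~(2) is the extra $p$ in the middle degree and that the quadric cases are correctly set aside. As a cross-check, the same conclusion follows by monodromy: since Gromov--Witten invariants are deformation-invariant and $b_2(Y)=1$, the monodromy of the family of smooth hyperplane sections acts on a hypothetically semisimple $QH^*(Y)$ by $\mathbb C[\mathbf q]$-algebra automorphisms commuting with $c_1(Y)\star$, which by semisimplicity has pairwise distinct eigenvalues; it therefore fixes every idempotent and acts trivially on $H^*(Y)$ — impossible once $p\ge1$, since Picard--Lefschetz transvections act nontrivially on the vanishing cycles.
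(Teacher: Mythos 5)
There is a genuine gap in the type $D$ case, and it stems from an arithmetic error that your whole scheme hinges on. You assert $r_{D_n/P_2}=2n-2$, but the adjoint variety of $D_n$ has index $r_X=h^\vee-1=2n-3$ (where $h^\vee=2n-2$ is the dual Coxeter number) and dimension $\dim X=2h^\vee-3=4n-7$. Hence $r_Y=2n-4$ and $\dim_{\mathbb C}Y=4n-8=2r_Y$, not $2r_Y-2$ as you claim --- the paper states this explicitly in the proof of Theorem~\ref{thm: type D}: ``Notice that $\dim Y = 2r_Y$, and $r_Y$ is even.'' With the correct values, the middle Betti number sits in the residue class $\bar 0$, which is self-dual, and for every $i$ the equality $\tilde b_{\bar i}(Y)=\tilde b_{\overline{-i}}(Y)$ is then an automatic consequence of Poincar\'e duality on $Y$ (since $\tilde b_{\bar i}=\tilde b_{\overline{d-i}}$ with $d\equiv 0\pmod{r_Y}$). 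So condition~(2) of Theorem~\ref{mainthm:ss} gives \emph{no} obstruction in type $D$, and this is precisely why the paper abandons the Betti-count approach there. Your ``cross-check'' does not repair this: semisimplicity of $QH^*(Y)$ does not imply that $c_1(Y)\star$ has pairwise distinct eigenvalues, so you cannot conclude that the monodromy fixes every idempotent and acts trivially. (In fact the paper's Section~3.3 treats a case where $QH^*(Y)$ is semisimple yet $c_1(Y)\star$ has a repeated eigenvalue $0$ on a nontrivial radical.)

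Your treatment of type $C$ is essentially sound and follows the same route as the paper's Theorem~\ref{thm: type C} (a direct application of Theorem~\ref{mainthm:ss}(2) to the hyperplane section, with the key numerical input being that $b_d(Y)>b_d(X)$). The paper derives the jump $p=b_{d+2}(X)=n-1>0$ from the torus-fixed-locus/Euler-characteristic argument of Benedetti--Perrin, whereas you appeal to Ein's classification of dual-defective varieties to get $p\ge1$; both are valid, and yours is if anything more self-contained. But the type $D$ half requires a completely different idea. The paper's actual route: realize $Y$ as a fiber of a universal family over $\mathfrak g^{\rm rs}$, show via Springer theory (Fourier transform of $IC$-sheaves of the nilpotent orbits $\mathbb O_0$ and $\mathbb O_{\min}$) that the monodromy representation on $H^*(Y)$ factors through the Weyl group $W$ and decomposes as $H^*(Y)_{\rm inv}\oplus H^*(Y)_{\rm std}$ with $H^*(Y)_{\rm std}\cong\chi_{\rm std}$; then use that $(\operatorname{Sym}^3\chi_{\rm std})^W=0$ outside type $A$ to build a $\mathbb Z_2$-graded subalgebra $\mathcal B$ with $\dim\mathcal B^{\bar 0}<\dim\mathcal B^{\bar 1}$, and conclude via Lemma~\ref{lemma: key}. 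That refinement --- exploiting the $W$-equivariant structure rather than just the $\mathbb Z_{r_Y}$-grading --- is what is missing from, and needed to salvage, your proposal for type $D$.
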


 For $Y$ in $Gr(3, 6)=A_5/P_3$ or the coadjoint Grassmannian $C_n/P_2$, we achieve  the non-simplicity of $QH^*(Y)$ by a direct application of  Theorem \ref{mainthm:ss} (2).
 We remark that for $Y$ in a coadjoint Grassmannian $G/P_k$ of  type $B_n, F_4$ or $G_2$, the non-simplicity of $QH^*(Y)$ can be simply verified by using  Theorem \ref{mainthm:ss} (2) as well.

The monodromy  action plays a surprising role in the proof (resp. disproof) of the semisimplicity of quantum cohomology for $Y$ in $A_7/P_3$ (resp. $D_n/P_2$).
More precisely, a hyperplane section depends on a choice of global sections,
and thus can be viewed as a fiber of a universal family (see Sections 3.3 and 4.2 for precise descriptions).
The fundamental group of the smooth locus  acts on $H^*(Y)$ by monodromy, and preserves the quantum product.
We obtain the semisimplicity of (the radical part of) $QH^*(Y)$ in case $A_7/P_3$ by using Deligne invariant cycle theorem. We  obtain the non-semisimplicity in case $D_n/P_2$  by carrying out a perverse sheaf-theoretic study of the universal family and applying Springer theory in geometric representation theory.
This will lead to an extra $\mathbb{Z}_2$-graded algebra structure of   the quantum cohomology, so that  Theorem \ref{mainthm:ss} (1) (strictly speaking, Lemma \ref{lemma: key}) can be applied.
 We remark that our proof works straightforwardly for $Y$ in a coadjoint Grassmannian of type $E$ as well.
We also notice that the monodromy action method has been used to study Gromov--Witten invariants \cite{Hu15, Mi19} before.

In  our proof of Theorem \ref{mainthm:ttypeA} (1), the key ingredient  is  the   combinatorial characterization  of nonvanishing cohomology by Snow \cite{Snow86} (see Proposition \ref{prop: Snow}) from the parabolic Borel--Weil Theorem by Bott \cite{Bott57}. The hypotheses on $(k, n)$ in  Theorem \ref{mainthm:ttypeA} (1) is equivalent to the following condition (see Lemma \ref{lemma: nk})
$$\dim Gr(k, n)=k(n-k)<2n=2 r_{Gr(k, n)}.$$
For any $G/P_k$ of (co)adjoint  type, the inequality $\dim G/P_k< 2 r_{G/P_k}$ holds by direct calculations, and $H^*(Y)$ is always of Hodge--Tate type \cite{BePe}.
These observations,  together with our  computations in examples of general Lie type, lead us to the  following conjecture.
We remark that the inequality $\dim X< 2r_X$ also appeared in the early study of Fano complete intersections in projective spaces \cite{Bea, TiXu}.
Although the cases for $G/P_k$ of type $A$ or (co)adjoint type have been classified in Theorems \ref{mainthm:ttypeA} and \ref{mainthm: adcoad}, we include them in the statement below for completeness.

\begin{conjecture}\label{conj: ssQHY}
Let $Y$ be a smooth hyperplane section of $G/P_k$.
The following should hold.
\begin{enumerate}[label=\rm(\arabic*)]
    \item \label{item:conjHT}
    $H^*(Y)$ is of Hodge--Tate type if and only if $\dim G/P_k< 2 r_{G/P_k}$, namely $G/P_k$ is given by one of the cases:

    \begin{enumerate}[label=\rm(\alph*)]
        \item a complex Grassmannian in Theorem \ref{mainthm:ttypeA} (1);

        \item a (co)adjoint Grassmannian;

        \item {\upshape(i)} $B_n/P_1$ or $D_n/P_1$,
        {\upshape(ii)} $C_3/P_3$,    {\upshape(iii)} $B_{n-1}/P_{n-1}\simeq D_n/P_n\simeq D_n/P_{n-1}$ for $n\in \{4,5,6,7\}$,
        {\upshape(iv)} $E_6/P_1\simeq E_6/P_6$ or
        {\upshape(v)} $E_7/P_7$.
    \end{enumerate}

    \item \label{item:conjSS}
        $QH^*(Y)$ is generically semisimple if and only if $G/P_k$ is given by one of the cases:
        \begin{enumerate}[label=\rm(\alph*)]
        \item a complex Grassmannian in Theorem \ref{mainthm:ttypeA} (2);

        \item an adjoint Grassmannian of type $B, C, F$ or $G$.

        \item {\upshape(i)} {$B_n/P_1$ or}
        $D_n/P_1$,
        {\upshape(ii)} $C_3/P_3$,    {\upshape(iii)} $B_{n-1}/P_{n-1}\simeq D_n/P_n\simeq D_n/P_{n-1}$ for $n\in \{4,5\}$.

        \end{enumerate}
\end{enumerate}
\end{conjecture}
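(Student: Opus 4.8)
The plan is to attack each of the two parts of Conjecture~\ref{conj: ssQHY} separately, isolating in each the ``only if'' (obstruction) direction from the ``if'' (enumeration) direction. For the Hodge-theoretic part~\ref{item:conjHT} the engine is Snow's combinatorial description (Proposition~\ref{prop: Snow}) of the cohomology $H^q(G/P_k,\Omega^p_{G/P_k}(\mathcal{O}(j)))$ produced by Bott's theorem; for the quantum part~\ref{item:conjSS} the engines are the numerical obstruction of Theorem~\ref{mainthm:ss}, the quantum Lefschetz principle of Proposition~\ref{prop: QLP}, and, in the delicate cases, the monodromy and perverse-sheaf analysis of Sections~3--4. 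Heuristically, part~\ref{item:conjHT} is ``in principle computational''---every tool is classical---whereas part~\ref{item:conjSS} depends on genuinely new Gromov--Witten input, which is why it is posed as a conjecture.

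For part~\ref{item:conjHT} the first step is the Lefschetz reduction: since $\dim Y=\dim G/P_k-1$ and $H^i(G/P_k)$ is of Hodge--Tate type for all $i$, Hodge--Tateness of $H^*(Y)$ can fail only in the vanishing part $H^{\dim Y}_{\mathrm{van}}(Y)$, whose Hodge structure one computes by feeding the restriction and conormal sequences $0\to\Omega^p_{G/P_k}(-1)\to\Omega^p_{G/P_k}\to\Omega^p_{G/P_k}|_Y\to 0$ and $0\to\Omega^{p-1}_Y(-1)\to\Omega^p_{G/P_k}|_Y\to\Omega^p_Y\to 0$ into Proposition~\ref{prop: Snow}, which expresses $h^{p,q}(Y)$ through the ambient groups $H^q(G/P_k,\Omega^p_{G/P_k}(\mathcal{O}(j)))$ for a window of twists $j$ whose width is governed by $\dim Y$. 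Writing $r=r_{G/P_k}$, so that $K_{G/P_k}=\mathcal{O}(-r)$, $K_Y=\mathcal{O}(1-r)|_Y$ and $r_Y=r-1$, the point is that the inequality $\dim G/P_k<2r_{G/P_k}$---equivalently the ``low-index Fano'' bound $\dim Y\le 2r_Y$ of \cite{Bea, TiXu}---should be precisely the numerical condition under which these twists stay in the range where $H^q(G/P_k,\Omega^p_{G/P_k}(\mathcal{O}(j)))$ is concentrated on the diagonal $q=p$, so that $H^{\dim Y}_{\mathrm{van}}(Y)$ comes out pure Tate; this matching already governs the type-$A$ case here (Lemma~\ref{lemma: nk}) and, in general, must be verified directly inside Snow's combinatorics. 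That gives the ``if'' direction in principle, and for the finitely many sporadic members of list~(c)---the quadrics $B_n/P_1$ and $D_n/P_1$ (whose hyperplane sections are again quadrics), $C_3/P_3$, the low-rank (co)spinor varieties $B_{n-1}/P_{n-1}\simeq D_n/P_n$, $E_6/P_1$ and $E_7/P_7$, all of which are (co)minuscule---it reduces to a finite Snow computation. For the ``only if'' direction one exhibits, for every $G/P_k$ with $\dim\ge 2r$, a nonzero $H^{p,q}$ with $p\ne q$ inside $H^{\dim Y}_{\mathrm{van}}(Y)$: the type-$A$ and (co)adjoint cases are settled (Theorems~\ref{mainthm:ttypeA}\,(1) and~\ref{mainthm: adcoad}, with \cite{BePe, BFM21}), and what is left are the families $B_n/P_k$, $C_n/P_k$, $D_n/P_k$ with $k$ fixed and $n\to\infty$ together with finitely many exceptional $G/P_k$. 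Here the crucial ingredient should be a \emph{persistence} lemma: once an off-diagonal class appears at some rank it survives at all larger ranks, since $\dim(G/P_k)$ grows quadratically in the rank while $r_{G/P_k}$ grows linearly and Snow's tableau bookkeeping is monotone in the rank. Making this persistence uniform across all the infinite families is, I expect, the main technical obstacle in part~\ref{item:conjHT}.

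For part~\ref{item:conjSS} the ``only if'' direction starts for free: by Proposition~\ref{prop:hodge}, generic semisimplicity of $QH^{\mathrm{ev}}(Y)$ forces $H^*(Y)$ to be Hodge--Tate, so $G/P_k$ must already lie in the part~\ref{item:conjHT} list. Inside that list one removes the cases absent from~\ref{item:conjSS}---the extra type-$A$ Grassmannians (such as $Gr(2,2m)$ with $m\ge 3$ and $Gr(3,6)$), the coadjoint Grassmannians, $C_3/P_3$, and the higher-rank (co)spinor varieties---by Theorem~\ref{mainthm:ss}: for each of them either the symmetry $\tilde b_{\bar i}=\tilde b_{-\bar i}$ of part~(2) or the domination $\tilde b_{\bar i}\le\tilde b_{\overline{di}}$ of part~(1) fails, exactly as in Theorem~\ref{thm:GrEF}. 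The remaining borderline cases, invisible to Betti numbers alone, are the $D_n/P_2$-type ones already handled in Theorem~\ref{mainthm:typeCD} via the perverse-sheaf and Springer-theory study of the universal family that installs the extra $\mathbb{Z}_2$-grading, together with their $E$-type analogues; these require that machinery and Lemma~\ref{lemma: key}. The ``if'' direction---actually producing semisimplicity for the cases in list~\ref{item:conjSS}---is where the real work lies: one applies the quantum Lefschetz principle (Proposition~\ref{prop: QLP}, after \cite{GaIr25}) to compare $QH^*(Y)$ with $QH^*(G/P_k)$, which is generically semisimple precisely in the (co)minuscule and adjoint-not-coadjoint cases by \cite{CMP, ChPe, PeSm}, and then shows that passing to a hyperplane section preserves semisimplicity. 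As in the treatment of $Gr(3,7)$, $Gr(3,8)$ and the adjoint varieties of types $B$, $C$, $F$, $G$, this splits into a statement about the ``ambient'' classes $H^*(G/P_k)\to H^*(Y)$---controlled by Deligne's invariant cycle theorem and the monodromy action on $H^*(Y)$---and a statement about the vanishing classes, which one hopes to pin down either by an explicit Chaput--Perrin-style presentation \cite[Theorem~4]{ChPe} or by a direct monodromy-invariance argument.

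The principal difficulty, and the reason Conjecture~\ref{conj: ssQHY} is left open in general, is precisely this last point: quantum Lefschetz for these hyperplane sections involves genus-zero Gromov--Witten invariants of $Y$ (equivalently, invariants of $G/P_k$ twisted by $\mathcal{O}(1)$) that are not presently known in closed form outside the cases already treated, so semisimplicity of the vanishing part cannot yet be certified; symmetrically, certifying \emph{non}-semisimplicity for those members of the part~\ref{item:conjHT} list that escape Theorem~\ref{mainthm:ss} would require extending the perverse-sheaf analysis of Section~4 to each remaining Lie type individually. In short, the cohomological half should be reachable by a careful, essentially bookkeeping-level extension of the present arguments, while the quantum half hinges on Gromov--Witten computations that are the genuine new ingredient.
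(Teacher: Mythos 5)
The statement you are addressing is posed in the paper as a \emph{conjecture}, and the paper itself supplies no complete proof; you recognize this and your text is correspondingly a strategy sketch rather than a proof, which is the right thing to do here. Your outline tracks the paper's own partial evidence and the remark that follows the conjecture fairly closely: (i) for part~(1) you invoke Bott--Snow cohomology computations of $H^q(\Omega^p(j))$, which is what the paper does for the type-$A$ case; (ii) for part~(2) you correctly identify Proposition~\ref{prop:hodge} as the entry point, Theorem~\ref{mainthm:ss} as the cheap Betti-number obstruction (which the paper uses for $Gr(3,6)$, $C_n/P_2$, and most of the sporadic non-semisimple cases), the monodromy/Springer argument of Theorem~\ref{thm: type D} for the borderline $D$- and $E$-type cases, and Proposition~\ref{prop: QLP} together with the $Gr(3,7),Gr(3,8)$ method for the positive cases. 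So the architecture of your plan matches the one the authors describe in the remark after the conjecture.

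Two points are worth flagging, one a mild overreach and one a genuine gap you would have to fill. First, Proposition~\ref{prop: Snow} as stated in the paper applies only to complex Grassmannians $Gr(k,n)$; for the remaining $G/P_k$ in lists~(b) and~(c) you cannot just plug into that proposition, and would instead need the general Bott--Kostant description of $H^q(G/P_k,\Omega^p(j))$, whose tableau-free bookkeeping over Weyl chambers is genuinely different from the hook-length combinatorics of Lemma~\ref{lem:comblemm}; a uniform ``persistence in $n$'' lemma of the kind you propose does not appear in the paper and would be a new contribution that one should not assume is routine, since the combinatorial structure of the relevant weights differs by Lie type and by $k$. Second, for the ``if'' direction of part~(2)(c)(ii)--(iii) the paper's remark simply asserts semisimplicity ``by similar arguments to $Y$ in $Gr(3,7)$''; your plan at that point invokes Proposition~\ref{prop: QLP}, but that proposition only transports eigenvalue data for the ambient subalgebra, and the whole difficulty for those sporadic cases (as for $Gr(3,8)$) is to control the primitive/radical part, which in the paper required an extra monodromy argument (Lemma~\ref{lemma: monobeta}) case by case. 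Your sketch correctly localizes the bottleneck there, but it is worth stating explicitly that Proposition~\ref{prop: QLP} alone does not close it; Conjecture~\ref{conj: QLP} (if proved) would give the intended mechanism, and that conjecture is itself open.
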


\begin{remark}
   A hyperplane section $Y$ in  Case (c) (i) is a quadratic hypersurface, so $H^*(Y)$ is of Hodge--Tate type and $QH^*(Y)$ is generically semisimple.
   For the remaining cases in Conjecture \ref{conj: ssQHY} (1)(c), we provide an outline as follows, which would require more work in practice.  The cohomology  $H^*(Y)$ of such $Y$ are of Hodge--Tate type by direct calculations.
   Among them, case (ii) and case (iii) with $n\in \{4, 5\}$ has semisimple quantum cohomology by using similar arguments to $Y$ in $Gr(3, 7)$; the rest have non-semisimple quantum cohomology by using Theorem \ref{mainthm:ss}.
\end{remark}

\begin{remark}
    There are in total 5 generalized Grassmannians $G/P_k$ with $\dim G/P_k=2 r_{G/P_k}$, given by
     $A_7/P_4$, $A_8/P_3=A_8/P_6$, $B_7/P_7=D_8/P_7=D_8/P_8, C_4/P_4$ and $C_4/P_3$.   They are not $N$-Calabi--Yau in the sense of Bernardara, Fatighenti and Manivel \cite{BFM21}. For general hyperplane section $Y$ in these cases, the Hodge number   $h^{r_Y+1, r_Y}(Y)$ is nonzero.
     As pointed out by Laurent Manivel, such nonzero Hodge number in the first 4 cases is the genus of an algebraic curve that can be attached to $Y$. See \cite{GSW, BBFM} and the references therein for the incorporation of higher genus curves with representations in Vinberg theory.
\end{remark}

Finally, let us describe a convenient and powerful variation \cite{GaIr25} of a quantum Lefschetz hyperplane principle
that originally organized our investigation for the cases $A_6/P_3$ and $A_7/P_3$,
and helped to lead it through. Information about quantum cohomology of homogeneous varieties
is relatively easy to obtain and organize, in particular tables of \cite{GG2},
based on Peterson's quantum Chevalley formula often turn out to be sufficient.
In turn, hyperplane sections of homogeneous varieties  usually lack sufficient homogeneity,
with a notable exception of (co)adjoint varieties, as observed in \cite{BePe}.
So ways to transfer of information  about quantum cohomology from an ambient space
to its hypersurface or other way around are valuable. Quantum Lefschetz Hyperplane Principles
is a variety of such ways. Some of them more obviously can be related to classical
Lefschetz hyperplane theorems, and others (e.g. some spectral formulations below) may
look very different from classical counterparts, but more useful in practice.
One feature that may look surprising in contrast to the classical case,
is that in many non-trivial situations the passage from ambient space to hyperplane section
is invertible, up to some minor constants fitting.

Now we let $Y$ be a smooth ample  hypersurface of $X$ with natural inclusion $j: Y\hookrightarrow X$.  The genus-zero Gromov--Witten theory of $Y$ can be related with the (twisted) Gromov--Witten theory of $X$ by the quantum Lefschetz principle theorems of  \cite{Kim, Lee, CoGi}.
 In these works it is mainly phrased as a relation either between the virtual fundamental classes of   moduli spaces of stable maps  or between the Givental's $J$-functions when passing from an ambient space $X$ to its hypersurface $Y$.
 There have   been various extensions to other situations, such as \cite{CCIT, Tse, IMM16}, and we refer to \cite{OhTh} and references therein for more progress in recent years.
 Note that the induced ring homomorphism
 $j^*: H^*(X)\to H^*(Y)=H^*_{\rm amb}(Y)\oplus H^*_{\rm prim}(Y)$ has image $j^*(H^*(X))=H^*(Y)_{\rm amb}$.  All the various versions of the aforementioned  quantum Lefschetz principle relate the information of $X$ to  the ambient part of the corresponding information of $Y$.
 By the classical Lefschetz hyperplane theorem, the restriction $j^*|_{H^m(X)}:H^m(X)\to H^m(Y)$ is an  isomorphism of abelian groups for $0\leq m< \dim Y$.

These original quoted versions of quantum Lefschetz principle turn out to be cumberstone for some applications in practice,
especially when important information is encoded differently.
See \cite[Section 6.6]{Gol07} for a formulation that relates quantum $D$-modules,
\cite[Section 7.3]{GoMa} for a relevant example that shows its convenience.
A version of a quantum Lefschetz principle on the level of quantum $D$-modules was
 in \cite[Corollary 1.2]{IMM16}.

A statement of a quantum Lefschetz hyperplane principle that directly relates $QH^*(X)$ with $QH^*(Y)$ on the level of algebras
is being developed by Galkin and Iritani. For the purposes of this article
the preliminary form available in \cite{GaIr25} would suffice.

For simplicity, here we restrict to Fano manifolds $X$ of Picard number $1$.
In this case, $QH^*(X)=H^*(X)\otimes \mathbb{C}[q_X]$ is a $\mathbb{C}[q_X]$-module, and we consider the linear operator $\hat \alpha$ on  $QH^\bullet(X):=QH^*(X)|_{q_X=1}$  induced by the quantum multiplication: $\beta\mapsto (\alpha\star \beta)|_{q_X=1}$.
Make $H^*(X)=\bigoplus_{\bar i}\mathcal{A}^{\bar i}$ into  a $\mathbb{Z}_r$-graded vector space, where
$$\mathcal{A}^{\bar i}:=\bigoplus_{j\equiv i\bmod r} H^{2j}(X).$$
Note, in particular, that periodic Betti numbers $\tilde{b}_{i} = \dim \mathcal{A}^{\bar i}$ are dimensions of graded pieces.
Define $Rad(X):=\{ \alpha\in H^*(X)\mid (c_1(X))^{\star m} \star \alpha=0 \mbox{ for some } m>0\}$, and consider its orthogonal complement $$\mathcal{A}^0_{\perp}(X):=\{\gamma \in \mathcal{A}^0(X)\mid \int_X\alpha \cup \gamma=0 \mbox{ for any } \alpha \in Rad(X)\}.$$
For $\dim Y>2$, by Lefschetz hyperplane theorem, the induced map $ \mathrm{Pic}(X) \to \mathrm{Pic}(Y)$
is an isomorphism of free cyclic groups $\mathrm{Pic}(X) = \mathbb{Z} \mathcal{O}_X(1)$ and $\mathrm{Pic}(Y) = \mathbb{Z} \mathcal{O}_Y(1)$
that sends an ample generator $\mathcal{O}_X(1)$ to an ample generator $\mathcal{O}_Y(1)$.
Denote by $h_X = c_1(\mathcal{O}_X(1))$ and $h_Y = j^* h_X = c_1(\mathcal{O}_Y(1))$,  so that $c_1(X)=r_X h_X$ and $c_1(Y)=r_Y h_Y$.

\begin{prop}[Quantum Lefschetz hyperplane; Galkin--Iritani \protect{\cite{GaIr25}}]\label{prop: QLP}
Let $j : Y\hookrightarrow X$ be
the natural inclusion
of a smooth Fano hypersurface $Y$
in a Fano manifold $X$ of Picard number 1.
Assume  $\dim Y >2$.
   \begin{enumerate}[label=\rm(\arabic*)]
    \item There is a   ring homomorphism $j^*_{q}: QH^*(X)\to QH^*(Y)$ that fits into a commutative diagram\\
  {}\hspace{2cm}\xymatrix{
   QH^*(X) \ar[d]_{\pi_X} \ar[r]^{j^*_q}
                & QH^*(Y) \ar[d]^{\pi_Y}   \\
 QH^*(X)/(q_X)= H^*(X)  \ar[r]^{j^*}
                & H^*(Y)=QH^*(Y)/(q_Y)            }\\
 and $j_q^*(q_X)\neq 0$. Moreover,
  $j^*_q(q_X) = q_Y h_Y$ if both $r_X - 1 = r_Y$ and $r_Y > 1$ hold.
     \item  When $r_Y>1$, the set of nonzero eigenvalues of $\widehat{h_X}^{r_X}$  coincides with that of $\widehat{h_Y}^{r_Y}$,
	    {up to dilation by a constant if $r_X-r_Y>1$.}
        \end{enumerate}
\end{prop}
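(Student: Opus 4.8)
The plan is to derive Proposition~\ref{prop: QLP} from the quantum Lefschetz hyperplane theorem in its quantum $D$-module form. Write $L=\mathcal{O}_X(Y)$; since $\dim Y>2$ the restriction $\operatorname{Pic}(X)\to\operatorname{Pic}(Y)$ is an isomorphism sending $h_X$ to $h_Y$, so $L=\mathcal{O}_X(l)$ with $l=r_X-r_Y>0$ and the hypothesis ``$Y$ is Fano'' is precisely that $-K_X-L=(r_X-l)h_X$ is ample. Under this positivity the quantum Lefschetz theorem of \cite{Kim,Lee,CoGi}, in the refined form of \cite{IMM16} adopted in \cite{GaIr25}, identifies the \emph{ambient} small quantum $D$-module of $Y$ with a hypergeometric (Euler-by-$L$) modification of the small quantum $D$-module of $X$; since $X$ has Picard rank one the associated mirror map is trivial (for $l<r_X$ the $I$-function already equals the $J$-function), so the identification is algebraic, and passing to the $z\to 0$ limit of the connection converts it into a comparison of the two quantum cohomology rings. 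First I would state this input carefully, matching up gradings and quantum variables, and record that the hypergeometric factor contributing degree $d$ is $\prod_{m=1}^{ld}(lh_X+mz)$, of leading coefficient $l^{ld}$ in $h_X$.

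For part (1), I define $j^*_q$ as the $z=0$ limit of the above $D$-module morphism, composed with $QH^*_{\mathrm{amb}}(Y)\hookrightarrow QH^*(Y)$; here one uses that $QH^*_{\mathrm{amb}}(Y)=H^*_{\mathrm{amb}}(Y)\otimes\mathbb{C}[q_Y]$ is a subalgebra --- no primitive class occurs in a quantum product of two ambient classes, which can be seen from the monodromy invariance of Gromov--Witten numbers and the irreducibility of the Picard--Lefschetz action on primitive cohomology. It is a ring homomorphism by construction; reducing modulo $q$ kills the hypergeometric modification (empty product at $d=0$) and recovers the classical restriction $j^*$, which gives the commutative square and, by a degree count, $j^*_q(h_X)=h_Y$ and $j^*_q(q_X)\in(q_Y)$. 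Nonvanishing of $j^*_q(q_X)$ is the assertion that the modified connection still depends on $q_Y$ (equivalently, $\widehat{h_Y}$ has a nonzero eigenvalue), which is visible directly in the hypergeometric factor. For the normalization when $l=1$ and $r_Y>1$: quasi-homogeneity (with $q_Y$ of degree $2r_Y$, $h_Y$ of degree $2$) forces $j^*_q(q_X)=c\,q_Yh_Y+(\text{terms of }q_Y\text{-order}\geq 2)$, the higher terms excluded because $r_Y>1$ makes their cohomological degree negative; and $c=1$ because the $l=1$ hypergeometric factor $\prod_{m=1}^{d}(h_X+mz)$ is monic in $h_X$.

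For part (2), first reduce to the ambient part: when $r_Y>1$, a degree count shows $h_Y\star p\in QH^*_{\mathrm{amb}}(Y)$ for every primitive $p$ (primitive cohomology lives in the single degree $\dim Y$, and neither $h_Y\cup p$ nor any $q_Y$-correction of $h_Y\star p$ can reach that degree again), so $\widehat{h_Y}$ annihilates $QH^\bullet(Y)/QH^\bullet_{\mathrm{amb}}(Y)$ and every nonzero eigenvalue of $\widehat{h_Y}^{r_Y}$ already occurs on $QH^\bullet_{\mathrm{amb}}(Y)$. On that ambient part, the $z=0$ limit of quantum Lefschetz says that the quantum relation satisfied by $\widehat{h_Y}$ over $\mathbb{C}[q_Y]$ is the one satisfied by $\widehat{h_X}$ over $\mathbb{C}[q_X]$ after the substitution $q_X\rightsquigarrow l^{\,l}q_Yh_Y^{\,l}$ (with lower-order $q_Y$-terms when $l\geq r_Y$). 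By quasi-homogeneity, the eigenvalues of $\widehat{h_X}$ over $\mathbb{C}[q_X]$ are the branches $\mu_iq_X^{1/r_X}$ with $\{\mu_i\}$ the eigenvalues at $q_X=1$; substituting $q_X=l^{\,l}q_Yh_Y^{\,l}$, setting $q_Y=1$, and writing $\nu$ for an eigenvalue of $\widehat{h_Y}$, one obtains $\nu=\mu_i(l^{\,l}\nu^{\,l})^{1/r_X}$, i.e. $\nu^{\,r_Y}=l^{\,l}\mu_i^{\,r_X}$; hence each nonzero eigenvalue of $\widehat{h_Y}^{r_Y}$ equals $l^{\,l}$ times one of $\widehat{h_X}^{r_X}$, and the surjectivity of the substitution yields the converse, proving the coincidence up to dilation by $l^{\,l}$ --- which is $1$ exactly when $l=1$, i.e. $r_X-r_Y=1$.

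The principal obstacle is the input itself: establishing the quantum-$D$-module form of quantum Lefschetz with all normalizations pinned down, and extracting from it the \emph{sharp} conclusions $j^*_q(q_X)\neq 0$ and $j^*_q(q_X)=q_Yh_Y$ rather than the mere existence of a homomorphism --- this genuinely requires unwinding the hypergeometric modification and the triviality of the mirror map, not just formal properties. Secondary care is needed in part (2): treating the non-semisimple case (generalized eigenvalues and nilpotent parts), controlling the lower-order $q_Y$-corrections to the substitution when $l\geq r_Y$, and verifying that every nonzero eigenvalue of $\widehat{h_X}^{r_X}$ is actually attained, not merely that the eigenvalues of $\widehat{h_Y}^{r_Y}$ lie among $l^{\,l}$ times them.
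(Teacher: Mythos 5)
The paper does not prove Proposition~\ref{prop: QLP}: it is cited from Galkin--Iritani \cite{GaIr25} (a paper in preparation), and the accompanying remark explains that the intended proof there first relates $QH^*(X)$ to the Euler-twisted quantum cohomology $QH^*_{\rm tw}(X,L)$ and then constructs a ring homomorphism $QH^*_{\rm tw}(X,L)\to QH^*(Y)$. So there is no in-paper proof to compare against; what the present paper itself does is verify the conclusions directly in the two concrete cases $Gr(3,7)$ and $Gr(3,8)$ (Theorem~\ref{thm: conjtrueforGr}), and it also notes that for Fano complete intersections in projective space the statement follows from \cite{Giv96,Ke}. Your proposal repackages the same underlying input (the hypergeometric modification of quantum Lefschetz) in the quantum $D$-module language of \cite{IMM16} rather than in the twisted-ring language; these are close cousins, so this is a different exposition of essentially the same strategy, not an independent argument.

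Within your sketch, several pieces are sound: the monodromy argument that $QH^*_{\rm amb}(Y)$ is a subalgebra does work (Deligne's invariant-cycle theorem kills monodromy invariants in the primitive part, and by Proposition~\ref{prop: monodromy} the monodromy preserves $\star$, so a product of ambient classes has no primitive component), and your degree arithmetic showing $j^*_q(q_X)=c\,q_Yh_Y$ with no higher $q_Y$-terms when $r_X-r_Y=1$, $r_Y>1$ is correct. The genuine gap is the load-bearing step you defer: passing from the $D$-module comparison to an actual \emph{ring} homomorphism $QH^*(X)\to QH^*_{\rm amb}(Y)$ with $h_X\mapsto h_Y$ and $q_X\mapsto l^l q_Yh_Y^l+\cdots$, including the nonvanishing $j^*_q(q_X)\neq 0$. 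Quantum Lefschetz in its $D$-module form compares flat connections, and the claim that its $z\to 0$ limit produces an algebra map (rather than merely a spectral correspondence) is exactly the content of \cite{GaIr25}; invoking it as ``input'' makes the argument circular at that point. Similarly, in part (2) the non-semisimple case (Jordan blocks of $\widehat{h_Y}$), and the lower-order $q_Y$-corrections to the substitution when $l\geq r_Y$, are acknowledged but not handled. You flag these obstacles honestly; the write-up is a plausible plan, but as it stands it outsources the decisive normalization and existence statements to the very theorem it is meant to establish.
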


\begin{remark}
The stability of spectra in part (2) of the above proposition could have been observed independently more than once.
We know it was observed and used in early 2000s by mirror symmetry research group in Moscow,
that included Golyshev, Galkin, and Przyjalkowski, not later than 2004,
and is implicitly used in supplementary materials \cite{GG2} for \cite{GG}.
Some of these developments were announced in Golyshev's report \cite{Gol08} on spectra and strains,
including various definitions of spectra, and a claim of spectral stability. We note that the case $r_Y=1$ will further require a shift of Spec($\widehat{h_Y}$) by a constant.
\end{remark}

\begin{remark} In \cite{GaIr25}, Galkin and Iritani studied the relationship between $QH^*(X)$ and the Euler-twisted quantum cohomology $QH^*_{\rm tw}(X, L)$ (twisted by a nef line bunlde $L$), and showed a ring homomorphism  $QH^*_{\rm tw}(X, L)\to QH^*(Y)$. As a consequence, they can obtain the current   Proposition \ref{prop: QLP}. In specific cases,   Proposition \ref{prop: QLP} can be verified more directly. Indeed, the cases of Fano complete intersections in projective spaces, follow immediately from  \cite[Corollaries 9.3 and 10.9]{Giv96}   and
   \cite[Lemmas 3.2 and 4.2]{Ke}. In Theorem \ref{thm: conjtrueforGr}, we also provide a direct verification for the cases of hyperplane sections in  $A_6/P_3$ or $A_7/P_3$.
\end{remark}

We may further compare $\mathcal{A}_{\perp}^0(X)$ and $\mathcal{A}_{\perp}^0(Y)$ as follows.
\begin{conjecture}\label{conj: QLP} With the same notation in Proposition \ref{prop: QLP},
there is a natural \emph{injective morphism} of algebras
$\mathcal{A}_{\perp}^0(X)\longrightarrow \mathcal{A}_{\perp}^0(Y)$
that intertwines operators $\widehat{h_X}^{r_X}$ and $\widehat{h_Y}^{r_Y}$.
\end{conjecture}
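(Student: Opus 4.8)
The plan is to realize the map as a localization of the Galkin--Iritani ring homomorphism $j_q^*$ of Proposition~\ref{prop: QLP}(1), and then to observe that the only substantive point is injectivity. First I would record the algebra behind the objects. Write $QH^*(X)=H^*(X)\otimes\mathbb{C}[q_X]$ with the grading by complex degree ($\deg h_X=1$, $\deg q_X=r_X$), let $\widehat{h_X}$ act by $\star$, and let $S(X)\subseteq QH^\bullet(X)$ be the ideal complementary to $Rad(X)$, i.e.\ the sum of generalized eigenspaces of $\widehat{h_X}$ for the nonzero eigenvalues. Then $QH^\bullet(X)=Rad(X)\times S(X)$ as a product of algebras, $S(X)$ is a unital subalgebra whose unit $e_X$ lies in $\mathcal A^0(X)$, and the Frobenius property gives $S(X)=Rad(X)^{\perp}$; hence $\mathcal A^0_\perp(X)=\mathcal A^0(X)\cap S(X)$, a subalgebra of $QH^\bullet(X)$. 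Multiplication by $u_X:=\widehat{h_X}^{r_X}(1)\in\mathcal A^0(X)$ coincides with the operator $\widehat{h_X}^{r_X}$, hence is nilpotent on $\mathcal A^0(X)\cap Rad(X)$ and invertible on $\mathcal A^0_\perp(X)$, so $\mathcal A^0_\perp(X)=\mathcal A^0(X)[u_X^{-1}]$. Finally, a degree count shows that $\mathcal A^0(X)[u_X^{-1}]$ is precisely the degree-zero part of $QH^*(X)[q_X^{-1},h_X^{-1}]$, with $u_X$ realized as the degree-zero class $q_X^{-1}h_X^{\star r_X}$.

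Since $j_q^*$ is a graded ring homomorphism with $j_q^*(h_X)=h_Y$ (forced for degree reasons once $r_Y>1$: the complex-degree-$1$ part of $QH^*(Y)$ is $H^2(Y)=\mathbb{C}h_Y$) and $j_q^*(q_X)=q_Yh_Y$ by Proposition~\ref{prop: QLP}(1), it sends $h_X$ and $q_X$ to units of $QH^*(Y)[q_Y^{-1},h_Y^{-1}]$, hence extends to the localizations and, being degree-preserving, restricts to a canonical homomorphism of algebras
\[ \psi\colon\ \mathcal A^0_\perp(X)=QH^*(X)[q_X^{-1},h_X^{-1}]_0\ \longrightarrow\ QH^*(Y)[q_Y^{-1},h_Y^{-1}]_0=\mathcal A^0_\perp(Y). \]
It is unital, $\psi(e_X)=e_Y$, it is canonical (hence natural), and, using $r_Y=r_X-1$ (the Lefschetz isomorphism $\mathrm{Pic}(Y)\cong\mathrm{Pic}(X)$ together with adjunction), it sends $u_X=q_X^{-1}h_X^{\star r_X}$ to $(q_Yh_Y)^{-1}h_Y^{\star r_X}=q_Y^{-1}h_Y^{\star r_Y}=u_Y$; consequently $\psi\circ\widehat{h_X}^{r_X}=\widehat{h_Y}^{r_Y}\circ\psi$, which is exactly the required intertwining. (For a hypersurface of degree $d$ one has $j_q^*(q_X)=c\,q_Yh_Y^{\star d}$ for a constant $c$ and $r_Y=r_X-d$, so $\psi$ intertwines $\widehat{h_X}^{r_X}$ with $c^{-1}\widehat{h_Y}^{r_Y}$, matching the dilation in Proposition~\ref{prop: QLP}(2); for a hyperplane section $c=1$.) Thus the construction, the algebra structure, naturality, and the intertwining are all formal consequences of Proposition~\ref{prop: QLP}; only injectivity remains.

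The hard part is injectivity of $\psi$. Because $\{q_X^ah_X^b\}$ consists of non-zero-divisors and $h_X$ is invertible of degree $1$ in $QH^*(X)[q_X^{-1},h_X^{-1}]$, one checks that $\psi$ is injective if and only if $\ker j_q^*$ is annihilated by a power of $\widehat{h_X}$, i.e.\ that \emph{no class of $QH^*(X)$ outside $Rad(X)$ is killed on $Y$.} This genuinely uses the quantum corrections, since classically $j^*\colon H^*(X)\to H^*(Y)$ has nonzero kernel above the middle dimension. I see two natural routes. One is to establish a twisted Poincar\'e compatibility of the shape $\int_Y(j_q^*\alpha)\star(j_q^*\beta)=\int_X\alpha\star\beta\star\Lambda_X$ with $\Lambda_X\equiv h_X\pmod{q_X}$ (a quantum projection formula expressing that $j_q^*$ is Poincar\'e-adjoint to a quantum Gysin map): the resulting bilinear form is degenerate on $H^*(X)$ but non-degenerate after restriction to $S(X)$, which forces $\ker j_q^*\cap S(X)=0$. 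The other is to invoke the full quantum Lefschetz principle of Givental and Coates--Givental, in the quantum $D$-module form of \cite{IMM16} or through the Euler-twisted theory $QH^*_{\mathrm{tw}}(X,L)$ of \cite{GaIr25}, which identifies the twisted quantum $D$-module of $X$ with the ambient subquotient $QH^\ast_{\mathrm{amb}}(Y)$ of that of $Y$ via an explicit invertible transformation; this gives both $\ker j_q^*\subseteq Rad(X)$ and $\operatorname{Im}(j_q^*)=QH^\ast_{\mathrm{amb}}(Y)$, so that $\psi$ embeds $\mathcal A^0_\perp(X)$ into $\mathcal A^0_\perp(Y)$ as the subalgebra whose codimension is accounted for by the eigenvalue multiplicities in Proposition~\ref{prop: QLP}(2). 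Upgrading Proposition~\ref{prop: QLP} to either of these injectivity statements is the essential remaining work; everything else reduces to the bookkeeping above.
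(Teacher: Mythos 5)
This statement is labelled a \emph{Conjecture} in the paper and is not proved there in general; it is only verified in two particular cases, $Y\subset Gr(3,7)$ and $Y\subset Gr(3,8)$, via the explicit characteristic-polynomial and minimal-polynomial computations of Lemma~\ref{lemma: GrYchp} and Proposition~\ref{prop:conjpart2true}, assembled into Theorem~\ref{thm: conjtrueforGr}. So there is no general ``paper proof'' to compare against. What you have produced is not a proof either, and you say so yourself: the final paragraph concedes that injectivity is ``the essential remaining work.'' Your two proposed routes --- a quantum projection formula $\int_Y (j_q^*\alpha)\star(j_q^*\beta)=\int_X\alpha\star\beta\star\Lambda_X$, or identification of twisted quantum $D$-modules following \cite{IMM16}, \cite{GaIr25} --- are speculations about statements that would close the gap, not arguments that they hold. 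Consequently the proposal is a reduction of the conjecture to a concrete open statement (that $\ker j_q^*$ is annihilated by a power of $\widehat{h_X}$, equivalently that $j_q^*$ is injective off the radical), not a resolution of it.

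That said, the formal part of your reduction is genuinely useful and appears to be correct. The identification $\mathcal A^0_\perp(X)=\mathcal A^0(X)\cap S(X)=\mathcal A^0(X)[u_X^{-1}]$, its realization as the degree-zero part of $QH^*(X)[q_X^{-1},h_X^{-1}]$, the extension of $j_q^*$ to the localizations, and the $u_X\mapsto u_Y$ computation giving the intertwining, all check out, and this package cleanly separates the ``bookkeeping'' from the real content. This point of view is not written down in the paper, so if it can be made to work it would be a genuine contribution. Two small warnings. First, you use that $j_q^*$ is a graded ring homomorphism to conclude $j_q^*(h_X)=h_Y$; Proposition~\ref{prop: QLP}(1) only asserts the commutative diagram modulo $q$ and the identity $j_q^*(q_X)=q_Yh_Y$, so you should justify gradedness from the Galkin--Iritani construction rather than treating it as free. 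Second, when you appeal to the Frobenius property to write $S(X)=Rad(X)^\perp$, you should note that the pairing in the paper's definition of $\mathcal A^0_\perp$ is the classical one $\int_X\alpha\cup\gamma$, and that $(\alpha\star\beta,\gamma)=(\alpha,\beta\star\gamma)$ for this pairing requires the fundamental-class axiom $\int_X\alpha\star\beta=\int_X\alpha\cup\beta$; this is standard but worth stating, since otherwise the equality $\mathcal A^0_\perp(X)=\mathcal A^0(X)\cap S(X)$, on which the whole framework rests, is not justified.
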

 
Conjecture \ref{conj: QLP}   tells us that    the semisimplicity of $QH^*(X)$ could be related to the semisimplicity of the subalgebra of  $QH^*(Y)$ generated by $\mathcal{A}_{\perp}^0(Y)$ and $c_1(Y)$. This could be useful when $Rad(Y)$ is of small dimension. Indeed, we were guided from this philosophy  when investigating $QH^*(Y)$ for $Y$ in  $A_6/P_3$ or $A_7/P_3$. We succeeded to verify  all the expected properties, providing evidences for Conjecture \ref{conj: QLP} and achieving the semisimplicity of $QH^*(Y)$.

 This paper is organized as follows. In Section 2, we provide necessary conditions for the semisimplicity of small quantum cohomology.  In Section 3, we completely characterize the semisimplicity of  $QH^*(Y)$ for smooth hyperplane sections $Y$ in $A_{n-1}/P_k$.
 Finally in Section 4, we show the non-semisimplicity  of $QH^*(Y)$  for $Y$ in   $C_n/P_2$ or $D_n/P_2$.

\subsection*{Acknowledgements}
The authors would like to thank Pieter Belmans,  Peter L. Guo, Jianxun Hu, Xiaowen Hu, Hiroshi Iritani, Hua-Zhong Ke, Allen Knutson, Larent Manivel, Jiayu Song, Mingzhi Yang, and Zhihang Yu   for helpful discussions. C.~Li is supported   by the National Key R \&  D Program of China No.~2023YFA1009801.
S.~Galkin is supported by CNPq grants PQ 315747 and PQ 308303, and Coordena\c{c}\~{a}o de Aperfei\c{c}oamento de Pessoal de N\'{i}vel Superior-Brasil (CAPES)-Finance Code 001. N.C. Leung is
  substantially supported by grants from the Research Grants Council of the
Hong Kong Special Administrative Region, China (Project No. CUHK14305923 and CUHK14306322).
R. Xiong is partially supported by the NSERC Discovery
grant RGPIN-2022-03060, Canada.

\section{Criterions for the semisimplicity}

\subsection{Semisimple commutative algebra}
Let   $\mathcal{A}$ be a  finite-dimensional commutative (unital) $\mathbb{C}$-algebra.  The algebra $\mathcal{A}$ is called semisimple if for any $\alpha\in \mathcal{A}$, the induced linear operator $\hat \alpha: \mathcal{A}\to \mathcal{A}; \beta\mapsto \alpha\cdot \beta$ is semisimple; or equivalently, every nonzero $\alpha$ is not  nilpotent.
 Suppose that  $\mathcal{A}=\bigoplus_{\chi\in M} \mathcal{A}^\chi$ is equipped an $M$-graded algebra structure, where  $M$ is an abelian group.

 \begin{lemma}\label{lemma: key}
      Suppose that  there exists $(d, \chi)\in \mathbb{Z}_{>0}\times M$ such that $\dim \mathcal{A}^{\chi}> \dim \mathcal{A}^{d \chi}$. Then there exists $\varepsilon \in \mathcal{A}\setminus \{0\}$ such that $\varepsilon^d=0$.
 \end{lemma}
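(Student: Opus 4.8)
The plan is to exploit the multiplication operator by a generic element of $\mathcal{A}^{\chi}$ and argue by a dimension count that some power of it must vanish on a nontrivial subspace. Concretely, consider the ``shift'' maps $L_\alpha^{(j)} : \mathcal{A}^{j\chi} \to \mathcal{A}^{(j+1)\chi}$ given by $\beta \mapsto \alpha\cdot\beta$ for a fixed $\alpha \in \mathcal{A}^{\chi}$; by the $M$-grading these are well defined, and for $\varepsilon = \alpha$ one has $\varepsilon^d \in \mathcal{A}^{d\chi}$ with $\widehat{\varepsilon^d}$ factoring as the composite $L_\alpha^{(d-1)}\circ\cdots\circ L_\alpha^{(0)}$ on the piece $\mathcal{A}^{0}$ — but more to the point, $\varepsilon^d = \alpha^d$ is itself an element of $\mathcal{A}^{d\chi}$, and I want to choose $\alpha$ so that this element is zero.

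First I would set $V = \mathcal{A}^{\chi}$ and $W = \mathcal{A}^{d\chi}$, with $\dim V > \dim W$ by hypothesis. The assignment $\alpha \mapsto \alpha^d$ defines a degree-$d$ polynomial (in fact homogeneous) map $\Phi : V \to W$. If $\Phi$ were identically zero we would be done immediately (any nonzero $\alpha\in V$ works, since then $\alpha^d = 0$). Otherwise, consider instead the bilinear-in-stages approach: for each $\alpha \in V$, the operator $L_\alpha := L_\alpha^{(d-1)}\circ\cdots\circ L_\alpha^{(0)} : \mathcal{A}^0 \to \mathcal{A}^{d\chi}$ sends $1 \mapsto \alpha^d$. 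The cleanest route, however, is the following: look at $L_\alpha^{(0)} : \mathcal{A}^0 \to \mathcal{A}^{\chi}$, namely multiplication by the unit composed into $V$ — that is just $1 \mapsto \alpha$ — and then iterate multiplication by $\alpha$. The key point is that $\widehat{\alpha}$ restricted to the subspace $\bigoplus_{j\geq 0}\mathcal{A}^{j\chi}$ raises the grading index by one, so $\widehat{\alpha}^{\,d}$ maps $\mathcal{A}^{0}$ into $\mathcal{A}^{d\chi}$ and, iterating, maps $\mathcal{A}^{(m)\chi}$ into $\mathcal{A}^{(m+d)\chi}$; since the grading group $M$ may give $j\chi$ eventually repeating, or since in the relevant application the $\chi$-graded part is finite-dimensional, one can arrange a cyclic or eventually-periodic behaviour. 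But the slickest argument avoids all of this: since $\dim V > \dim W$ and $\Phi(\alpha) = \alpha^d$ depends polynomially on $\alpha$, if $\Phi \not\equiv 0$ then I instead consider the \emph{linear} map obtained by polarization, or — better — I argue directly on the operator $\widehat{\alpha}$ for generic $\alpha \in V$: its restriction $V \to W$, $\beta \mapsto \alpha\beta$ (using $\alpha\in V=\mathcal{A}^{\chi}$ and $\beta\in V$ gives $\alpha\beta\in\mathcal{A}^{2\chi}$, not $W$ in general), so this needs care when $d>2$.

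Given that subtlety, the approach I would actually commit to is: choose a generic $\alpha\in\mathcal{A}^{\chi}$ and study the chain of multiplication maps $\mathcal{A}^{\chi}\xrightarrow{\alpha}\mathcal{A}^{2\chi}\xrightarrow{\alpha}\cdots\xrightarrow{\alpha}\mathcal{A}^{d\chi}$. Let $U_j \subseteq \mathcal{A}^{j\chi}$ be the image of $\alpha^{j-1}\cdot(-)$ starting from $\mathcal{A}^{\chi}$; then $\dim U_1 = \dim\mathcal{A}^{\chi} = \dim V$ and $\dim U_d \leq \dim\mathcal{A}^{d\chi} = \dim W < \dim V$, so the composite $\mathcal{A}^{\chi}\to\mathcal{A}^{d\chi}$, $\beta\mapsto\alpha^{d-1}\beta$, has a nonzero kernel; pick $0\neq\gamma\in\mathcal{A}^{\chi}$ with $\alpha^{d-1}\gamma = 0$. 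Now $(\alpha\gamma$-type element won't directly cube to zero$)$ — instead observe: if $\alpha^{d-1}\gamma=0$ for all $\gamma$ in a subspace, consider the element $\varepsilon:=\alpha$ itself on the ideal it generates. Cleanest of all: apply the hypothesis with $\chi$ replaced by $\chi$ and note $\widehat{\alpha}^{d}$ maps the $(d{-}1)$-dimensional-too-big space $\mathcal{A}^{\chi}$ nilpotently into $\mathcal{A}^{(d+1)\chi}$... The \textbf{main obstacle} is precisely pinning down which \emph{single} element $\varepsilon$ satisfies $\varepsilon^d = 0$: the honest fix is to take $\varepsilon = \alpha$ for a carefully chosen (non-generic) $\alpha\in\mathcal{A}^\chi$, using that the homogeneous polynomial map $\alpha\mapsto\alpha^d$ from the higher-dimensional $\mathcal{A}^\chi$ to the lower-dimensional $\mathcal{A}^{d\chi}$ either vanishes identically (done) or, failing that, one passes to the operator $\widehat\alpha$ and uses a rank argument on the composite $\mathcal{A}^0\to\mathcal{A}^\chi\to\cdots\to\mathcal{A}^{d\chi}$ together with the structure of the unital algebra: $\widehat{\alpha}^d(1)=\alpha^d$, and choosing $\alpha$ in the kernel of the non-surjective (by the dimension gap, after composing enough shift maps so that dimensions strictly drop) composite forces $\alpha^d$ to land in a proper subspace, iterating which yields nilpotency. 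I expect the write-up to resolve this by the polynomial-map dichotomy: either $\alpha\mapsto\alpha^d$ is zero on all of $\mathcal{A}^\chi$ (immediate), or its image is a constructible subset of $\mathcal{A}^{d\chi}$ whose span, combined with the grading and the dimension inequality fed back through Lemma-type reasoning, produces the required $\varepsilon$ — this last bookkeeping being the only genuinely delicate point.
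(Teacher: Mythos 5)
You correctly set up the problem: you identify the map $\Phi\colon \mathcal{A}^{\chi}\to\mathcal{A}^{d\chi}$, $\alpha\mapsto\alpha^d$, whose components (in coordinates relative to bases) are homogeneous polynomials of degree $d$ in $n=\dim\mathcal{A}^\chi$ variables landing in $m=\dim\mathcal{A}^{d\chi}$ coordinates, with $n>m$. But you never close the argument: you handle the case $\Phi\equiv 0$ (trivial) and then, for $\Phi\not\equiv 0$, you cycle through several linear-algebra ideas (rank of $\widehat{\alpha}$, kernel of $\alpha^{d-1}\cdot(-)$, ``polarization'') that cannot work, because none of them detects a single $\varepsilon$ with $\varepsilon^d=0$ and the map $\Phi$ is genuinely nonlinear. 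You explicitly flag this as ``the main obstacle'' and leave it unresolved, speculating about ``constructible subsets'' and ``Lemma-type reasoning.''

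The missing ingredient is elementary projective dimension theory, and it makes the case distinction unnecessary: $m$ homogeneous polynomials of positive degree in $n$ variables over $\mathbb{C}$, with $n>m$, always have a common nonzero zero, since the corresponding intersection of $m$ hypersurfaces in $\mathbb{P}^{n-1}$ has dimension at least $n-1-m\geq 0$ and hence is nonempty. That nonzero common zero $\mathbf{a}\in\mathbb{C}^n$ gives $\varepsilon=\sum a_i e_i\neq 0$ with $\varepsilon^d=0$, which is exactly the paper's (one-sentence) proof. Your rank/kernel arguments fail precisely because they only see the linear operator $\beta\mapsto\alpha\beta$ for a \emph{fixed} $\alpha$, whereas the statement requires varying $\alpha$ nonlinearly to force $\alpha^d$ itself to vanish.
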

   \begin{proof}
       Take a basis $\{e_i\}_{i=1}^n$ (resp. $\{\hat e_j\}_{j=1}^m$) of $\mathcal{A}^\chi$ (resp. $\mathcal{A}^{d\chi}$). Then $0=\varepsilon^d=(\sum_{i=1}^n a_ie_i)^d=\sum_{j=1}^m f_j(\mathbf{a}) \hat e_j$ holds if and only if $\mathbf{a}\in \mathbb{C}^n$ is a common root of the
       $m$  homogeneous polynomials $f_j(y_1, \cdots, y_n)$ of degree $d>0$. Since $n>m$, there exists a nonzero common root.
   \end{proof}

 \begin{lemma}\label{lemma: AtoAy} Take any $m>0$ and any $\alpha\in \mathcal{A}$ with $\hat \alpha$ invertible.   Then  $\mathcal{A}$ is semisimple if and only if $\mathcal{A}[y]/(y^m-\alpha)$ is semisimple.
 \end{lemma}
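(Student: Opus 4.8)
The plan is to exploit the fact that $\mathcal{B} := \mathcal{A}[y]/(y^m - \alpha)$ is a free $\mathcal{A}$-module of rank $m$ with basis $1, y, \dots, y^{m-1}$, and to relate semisimplicity of $\mathcal{B}$ to that of $\mathcal{A}$ via the structure of maximal ideals. First I would recall that a finite-dimensional commutative $\mathbb{C}$-algebra is semisimple if and only if it is reduced (no nonzero nilpotents), equivalently if and only if it is isomorphic to a product of copies of $\mathbb{C}$, equivalently $\operatorname{Spec}$ is a finite set of reduced points. Since $\alpha$ has $\hat\alpha$ invertible, $\alpha$ is a unit in $\mathcal{A}$, so on each factor in the decomposition $\mathcal{A}/\mathrm{nil}(\mathcal{A}) \cong \prod_{i} \mathbb{C}$ the image of $\alpha$ is a nonzero scalar $\lambda_i$.

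For the direction ($\mathcal{A}$ semisimple $\Rightarrow$ $\mathcal{B}$ semisimple): assuming $\mathcal{A} \cong \prod_{i=1}^{N} \mathbb{C}$, we get $\mathcal{B} \cong \prod_{i=1}^{N} \mathbb{C}[y]/(y^m - \lambda_i)$, and since each $\lambda_i \neq 0$ the polynomial $y^m - \lambda_i$ has $m$ distinct roots, so $\mathbb{C}[y]/(y^m-\lambda_i) \cong \mathbb{C}^m$ is reduced; hence $\mathcal{B}$ is a product of copies of $\mathbb{C}$ and is semisimple. For the converse ($\mathcal{B}$ semisimple $\Rightarrow$ $\mathcal{A}$ semisimple): if $\varepsilon \in \mathcal{A}$ is a nonzero nilpotent, then $\varepsilon$ viewed inside $\mathcal{B}$ (via the inclusion $\mathcal{A} \hookrightarrow \mathcal{B}$, which is injective because $\mathcal{B}$ is free over $\mathcal{A}$) is still nonzero and still nilpotent, contradicting semisimplicity of $\mathcal{B}$. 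So $\mathcal{A}$ has no nonzero nilpotents and is semisimple.

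The only mildly delicate point is justifying that the projection $\mathcal{B} \twoheadrightarrow \prod_i \mathbb{C}[y]/(y^m-\lambda_i)$ really is an isomorphism and not just a surjection with nilpotent kernel — equivalently, that $\mathcal{B}$ is reduced. I would handle this by base-changing the decomposition $\mathcal{A} \cong \prod_i \mathbb{C}$ along the (flat, indeed free) ring map $\mathcal{A} \to \mathcal{A}[y]/(y^m - \alpha)$: tensoring a product decomposition of rings with any algebra commutes with the product, giving $\mathcal{B} \cong \prod_i \bigl(\mathbb{C}[y]/(y^m-\lambda_i)\bigr)$ on the nose. Thus the main (and essentially the only) obstacle is purely bookkeeping: pinning down that $\hat\alpha$ invertible forces each $\lambda_i \neq 0$ so that each $y^m - \lambda_i$ is separable, and that $y^m - \lambda_i$ separable exactly means $\mathbb{C}[y]/(y^m-\lambda_i)$ is semisimple. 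Everything else is the standard dictionary between semisimplicity, reducedness, and products of $\mathbb{C}$.
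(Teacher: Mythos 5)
Your proof is correct and takes essentially the same route as the paper: decompose $\mathcal{A}$ into a product of copies of $\mathbb{C}$, base-change along $\mathcal{A}\to\mathcal{A}[y]/(y^m-\alpha)$ to get $\prod_i \mathbb{C}[y]/(y^m-\lambda_i)$, use invertibility of $\alpha$ to see each $\lambda_i\neq 0$ so each factor is reduced, and for the converse note that $\mathcal{A}$ is a subalgebra of the semisimple algebra $\mathcal{B}$. Your write-up spells out a couple of details (injectivity of $\mathcal{A}\hookrightarrow\mathcal{B}$ via freeness, and that tensoring commutes with the finite product) that the paper leaves implicit, but the argument is the same.
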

 \begin{proof}
 If $\mathcal{A}$ is semisimple, we have the decomposition $\mathcal{A}=\bigoplus_i A_i$ into one-dimensional subalgebras $A_i$. The map $\alpha\mapsto \alpha_i$ by taking the $A_i$-component of $\alpha$ induces a ring homomorphism $A[y]\to A_i[y]$, and it further induces a ring isomorphism $\mathcal{A}[y]/(y^m-\alpha) \cong \bigoplus_i A_i[y]/(y^m-\alpha_i)$. As each $A_i$ is isomorphic to the complex field, the algebra $A_i[y]/(y^m-\alpha_i)$ is semisimple if and only if every root of $y^m-\alpha_i$ is of multiplicity one. Note that $\alpha$ is invertible if and only if every component $\alpha_i$ is invertible. Hence,
 $y^m-\alpha_i$ does not have multiple roots.

     Conversely, the subalgebra $\mathcal{A}$ of the semisimple algebra  $\mathcal{A}[y]/(y^m-\alpha)$ is semisimple.
 \end{proof}

    \subsection{Quantum cohomology}  We refer to \cite{CoKa} for more details of Gromov--Witten theory.
   Throughout the paper, we assume  $X$ to be a Fano manifold with even cohomology only.  Let  $\overline{\mathcal{M}}_{0, m}(X, \mathbf{d})$ denote the moduli space of stable maps to $X$ of degree $\mathbf{d}\in H_2(X, \mathbb{Z})$,
 and ${\rm ev}_i:\overline{\mathcal{M}}_{0, m}(X, \mathbf{d})\to X$ denote the $i$th   evaluation map.     For $\gamma_1, \cdots, \gamma_m\in H^*(X)$, we consider the  genus-zero,
$m$-point Gromov--Witten invariant   defined by
\begin{equation}
    \langle \gamma_1, \cdots, \gamma_m\rangle^X_{\mathbf{d}}:=\int_{[\overline{\mathcal{M}}_{0,m}(X, \mathbf{d})]^{\rm vir}} {\rm ev}_1^*(\gamma_1)\cup\cdots \cup {\rm ev}_m^*(\gamma_m).
\end{equation}
Here  the virtual fundamental class $[\overline{\mathcal{M}}_{0,m}(X, \mathbf{d})]^{\rm vir}\in H_{2{\rm expdim}}(\overline{\mathcal{M}}_{0,m}(X, \mathbf{d}), \mathbb{Q})$ can be defined from some variety of dimension
\begin{equation}
    {\rm expdim}= \dim X+\int_{\mathbf{d}}c_1(X)+m-3.
\end{equation}

Take a basis $\{[C_1], \cdots, [C_b]\}$ of the Mori cone $\overline{\rm NE}(X)$ of effective curve classes. Each generator $[C_i]$ associates with an indeterminate $q_i$. For $\mathbf{d}= \sum_j d_j[C_j]$, we denote $\mathbf{q}^{\mathbf{d}}:= \prod_j q_j^{d_j}$.
The (small) quantum cohomology $QH^*(X)=(H^*(X)\otimes \mathbb{C}[q_1, \cdots, q_b], \star)$ is an associative commutative  algebra with unit $1\in H^0(X) \otimes \mathbb{C}$, with the quantum product defined by
$$\alpha_i\star \alpha_j:=\sum_i\sum_{\mathbf{d}\in\overline{\rm NE}(X)} \langle \alpha_i, \alpha_j, \alpha_k^\vee\rangle_{\mathbf{d}}^X \alpha_k \mathbf{q}^{\mathbf{d}};  $$
\noindent Here $\{\alpha_i\}_i$ denotes a basis of $H^*(X)$, and
$\{\alpha_i^\vee\}_i$ denotes its dual basis with respect to Poincar\'e pairing: $(\alpha_i, \alpha_j^\vee)_X=\int_{[X]} \alpha_i \cup \alpha_j^\vee =\delta_{i, j}$.
The quantum cohomology    $QH^*(X)$ is naturally a $\mathbb{Z}$-graded algebra with respect to the grading
 $$\deg q_i:=\int_{[C_i]}c_1(X),\qquad \deg \alpha:=j,\quad \forall \alpha\in H^{2j}(X)\setminus\{0\}.$$

 \bigbreak

\begin{proof}[Proof of Theorem \ref{mainthm:ss}]
Notice that $r$ equals the  greatest common divisor of $\deg q_1, \cdots, \deg q_b$. Thus for any specialization $\mathbf{q}=\eta\in \mathbb{C}^b$,
the $\mathbb{Z}$-graded algebra $QH^*(X)$ naturally induces an $\mathbb{Z}_r$-graded algebra
$(H^*(X)=QH^*(X)|_{\mathbf{q}=\eta}=\bigoplus_{\bar i\in \mathbb{Z}_r}\mathcal{A}^{\bar i}, \star_{\eta})$,
 where $\mathcal{A}^{\bar i}=\bigoplus_{\bar j=\bar i} H^{2j}(X)$.

If  $QH^*(X)$ is semisimple at some   $\mathbf{q}=\eta$, then
 $(H^*(X), \star_{\eta})$ has no nonzero nilpotent element, and hence statement (1) follows from Lemma \ref{lemma: key}.

 Consequently any $i$, we have $\dim \mathcal{A}^{\bar i}\leq \dim \mathcal{A}^{(r-1)\bar i}\leq \mathcal{A}^{-\bar i}$. Since $i$ is arbitrary, we then  have $\dim \mathcal{A}^{\bar i}=   \mathcal{A}^{-\bar i}$. That is, statement (2) holds.
\end{proof}

 \subsection{First applications} Given a Lie type $\mathcal{D}_n$ and an integer $1\leq k\leq n$, we denote by $\mathcal{D}_n/P_k$
 the quotient of a simply-connected, complex simple Lie group $G$ of Lie type $\mathcal{D}_n$ by the maximal parabolic subgroup $P_k$ of $G$ that corresponds to the subset $\Delta\setminus\{\alpha_k\}$.
 Here $\Delta=\{\alpha_1, \cdots, \alpha_n\}$ is a base of simple roots for $G$ with the same ordering as in \cite{Bel}, and  $\mathcal{D}_n/P_k$ is called a generalized  Grassmannian (of type $\mathcal{D}_n$). In particular, the Grassmannian of type $A_{n-1}$,
  $$A_{n-1}/P_k=\{V\leq \mathbb{C}^n\mid \dim V=k\}=: Gr(k, n),$$
  is known as a complex Grassmannian.
 There are in total 27 Grassmannians of exceptional Lie type: 9 of which are known to have semisimple quantum cohomology, 14 are known to have non-semisimple quantum cohomology, and the other 4 cases are unknown \cite{Bel}. Here we provide a proof for 13 non-semisimple cases by Theorem \ref{mainthm:ss}.
 Unfortunately, the remaining non-semisimple case $E_8/P_4$ cannot be checked by Theorem \ref{mainthm:ss}, neither it gives any obstruction for the 4 unknown cases $E_7/P_2$, $E_7/P_4$, $E_7/P_5$, $E_8/P_6$.

\begin{thm}\label{thm:GrEF}
  For any $X\in\{E_6/P_2,  E_6/P_4, E_7/P_1, E_7/P_3, E_7/P_6, E_8/P_1, E_8/P_2, E_8/P_3, E_8/P_5,$ $ E_8/P_7, E_8/P_8, F_4/P_3, F_4/P_4 \}$, the   quantum cohomology $QH^*(X)$  is not semisimple.
\end{thm}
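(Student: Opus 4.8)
The plan is to deduce the theorem entirely from part~(2) of Theorem~\ref{mainthm:ss}: since generic semisimplicity of $QH^{\rm ev}(X)$ forces $\tilde{b}_{\bar i}(X)=\tilde{b}_{-\bar i}(X)$ for all $i$, it is enough to exhibit, for each of the thirteen varieties $X=\mathcal{D}_n/P_k$ in the list, one integer $i$ with $\tilde{b}_{\bar i}(X)\neq\tilde{b}_{-\bar i}(X)$. This requires only two pieces of standard data per case: the even Betti numbers $b_{2j}(X)$ and the Fano index $r_X$.

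For the Betti numbers I would use the Bruhat decomposition: $H^{2j}(\mathcal{D}_n/P_k)$ has a basis of Schubert classes indexed by the length-$j$ minimal representatives of the cosets in $W/W_{P_k}$, so the even Poincar\'e polynomial equals the ratio $P_{W}(t)/P_{W_{P_k}}(t)$ of Poincar\'e polynomials of the Weyl group of $\mathcal{D}_n$ and of the Levi factor, each a product $\prod_i(1-t^{d_i})/(1-t)$ over the degrees $d_i$. These expansions, together with the Fano indices of generalized Grassmannians of Picard rank one, are classical and tabulated, and I would read them off from \cite{Bour} and \cite{Bel}. Writing $N=\dim X$, note that Poincar\'e duality $b_{2j}=b_{2(N-j)}$ already gives $\tilde{b}_{\bar i}=\tilde{b}_{\overline{N}-\bar i}$ for free; so the substance of condition~(2) is the extra symmetry $\tilde{b}_{\bar i}=\tilde{b}_{-\bar i}$, i.e.\ the invariance of the sequence $(\tilde{b}_{\bar i})$ under translation of the index by $\overline{N}$ in $\mathbb{Z}_{r_X}$.

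With this in hand one checks the list case by case. For the (co)adjoint entries $E_6/P_2$, $E_7/P_1$, $E_8/P_8$ one has $N=2r_X-1$, so $\overline{N}=\overline{-1}$ generates $\mathbb{Z}_{r_X}$; condition~(2) would then force every $\tilde{b}_{\bar i}$ to equal the common value $\big(\sum_j b_{2j}(X)\big)/r_X=|W/W_{P_k}|/r_X$, which is impossible whenever $r_X\nmid|W/W_{P_k}|$ --- for instance $126$ is not a multiple of $17$ in the case of $E_7/P_1$, and the analogous divisibility fails for the other two. For the remaining entries --- including $F_4/P_4$ (where $r_X$ does divide $|W/W_{P_k}|$) and the non-adjoint cases $E_6/P_4$, $E_7/P_3$, $E_7/P_6$, $E_8/P_1$, $E_8/P_2$, $E_8/P_3$, $E_8/P_5$, $E_8/P_7$, $F_4/P_3$ --- the divisibility shortcut is not available, so I would compare the index-periodic sums through low and middle cohomological degrees directly from the Betti table; since $b_2(X)=1$ by Picard rank one, it is typically enough to display $\tilde{b}_{\bar 1}(X)\neq\tilde{b}_{\overline{-1}}(X)$.

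The only genuine obstacle is that there is no uniform one-line argument: Theorem~\ref{mainthm:ss}(2) is just sharp enough for these thirteen varieties but already fails to witness the non-semisimplicity of $E_8/P_4$ (and says nothing about the four undetermined cases $E_7/P_2$, $E_7/P_4$, $E_7/P_5$, $E_8/P_6$), so one really does have to assemble the Betti tables and indices and inspect each $X$ in the list by hand, with a little extra care in the middle-degree comparisons where Poincar\'e duality alone does not yet produce an asymmetry.
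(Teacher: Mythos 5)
Your proposal uses the same criterion as the paper, namely Theorem~\ref{mainthm:ss}(2), and for most cases falls back on exactly the paper's method of tabulating the index-periodic Betti numbers and comparing $\tilde{b}_{\bar i}$ with $\tilde{b}_{-\bar i}$ directly (the paper's table does this with $i=1$ for twelve cases and $i=4$ for $F_4/P_4$, so your hedge about ``a little extra care in the middle-degree comparisons'' is indeed needed: for $F_4/P_4$ one checks $\tilde{b}_{\bar 1}=\tilde{b}_{\overline{-1}}=2$, but $\tilde{b}_{\bar 4}=3\neq 2=\tilde{b}_{\overline{-4}}$). So in its fallback form the argument is essentially the paper's.

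However, you undersell the divisibility observation. The combination of Poincar\'e duality $\tilde{b}_{\bar i}=\tilde{b}_{\overline{N}-\bar i}$ with condition~(2) gives translation-invariance of $\bar i\mapsto\tilde{b}_{\bar i}$ under $\bar i\mapsto\bar i-\overline{N}$, hence forces all $\tilde{b}_{\bar i}$ equal to $\chi(X)/r_X$ whenever $\gcd(\dim X,r_X)=1$ --- not only in the adjoint case $\dim X=2r_X-1$. A check shows $\gcd(\dim X,r_X)=1$ \emph{and} $r_X\nmid\chi(X)$ hold for all thirteen varieties in the list, so this divisibility test alone settles the entire theorem uniformly, which is arguably cleaner than the paper's term-by-term table. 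Relatedly, your parenthetical that $r_X$ divides $|W/W_{P_k}|$ for $F_4/P_4$ is a factual slip: there $r_X=11$ and $\chi(F_4/P_4)=|W(F_4)|/|W(B_3)|=1152/48=24$, and $11\nmid 24$, so the divisibility shortcut applies to this case as well. The final remark about $E_8/P_4$ and the four open exceptional cases correctly identifies the limits of the criterion and matches the paper's discussion.
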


\begin{proof} Simply denote $r=r_X$ and $b_{2j}=b_{2j}(X)$. Then we can read off  the data of $\dim X, r, b_{2j}$  of each $X$ from \cite{Bel} directly, where $b_{2j}=0$ if $j>\dim X$. Then we can calculate $\dim \mathcal{A}^{\pm \bar 4}$ for $F_4/P_4$ and $\dim \mathcal{A}^{\pm \bar 1}$ for the other 12 cases.
For instance for $E_7/P_6$, we have
$\dim \mathcal{A}^{\bar 1}=b_{2}+b_{2\cdot 14}+b_{2\cdot 27}+b_{2\cdot 40}=1+26+29+2=58$, and $\dim \mathcal{A}^{-\bar 1}=b_{2\cdot 12}+b_{2\cdot 25}+b_{2\cdot 38}+b_{2\cdot 51}=21+34+4+0=59$.
 
\[
\begin{array}{|c|c|c|c|c|c|c|}
  \hline
  X & \dim X & r & i & m & \dim \mathcal{A}^{\bar i}=\sum\limits_{j=0}^m b_{2(i+jr)} & \dim \mathcal{A}^{-\bar i}=\sum\limits_{j=0}^m b_{2(m-i+jr)}   \\
 \hline \hline
 E_6/P_2 & 21 & 11 & &1 & 6 &7\\ \cline{1-3}\cline{5-7}
  E_6/P_4 & 29 & 7 & &4 & 102 &104\\ \cline{1-3}\cline{5-7}
   E_7/P_1 & 33 & 17 & &1 & 7 &8\\ \cline{1-3}\cline{5-7}
    E_7/P_3 & 47 & 11 & &4 & 183 &184\\ \cline{1-3}\cline{5-7}
   E_7/P_6 & 42 & 13 & &3 & 58 &59\\ \cline{1-3}\cline{5-7}
    E_8/P_1 & 78 & 23 & & 3 & 94 &95\\ \cline{1-3}\cline{5-7}
    E_8/P_2 & 92 & 17 & 1 & 5 & 1016 &1017\\ \cline{1-3}\cline{5-7}
    E_8/P_3 & 98 & 13 & & 7 & 5317 &5318\\ \cline{1-3}\cline{5-7}
     E_8/P_5 & 104 & 11 & & 10 & 21993 &21992\\ \cline{1-3}\cline{5-7}
      E_8/P_7 & 83 & 19 & & 4 & 354 &355\\ \cline{1-3}\cline{5-7}
       E_8/P_8 & 57 & 29 & & 1 & 8 &9\\ \cline{1-3}\cline{5-7}
          F_4/P_3 & 20 & 7 & & 2 & 13 &14\\ \hline
          F_4/P_4 & 15 & 11 & 4 & 2 & 3 &2\\ \hline
\end{array}
\] 
As from the table, none of the 13 cases satisfies $\dim \mathcal{A}^{\bar i}=\mathcal{A}^{-\bar i}$ for the given $i$. Hence, none of them has semisimple quantum cohomology by Theorem \ref{mainthm:ss} (2).
 \end{proof}

\section{Smooth hyperplane sections of $Gr(k, n)$}
In this section, we let   $X=Gr(k, n)=\{V\leq \mathbb{C}^n\mid \dim V=k\}$, which is a closed subvariety in $\mathbb{P}^{{n\choose k}-1}$ via the Pl\"ucker embedding. The intersection of $X$ with a general hyperplane of $\mathbb{P}^{{n\choose k}-1}$ gives a smooth hyperplane section $Y$ of the complex Grassmannian $X$.
We will investigate the semisimplicity of $QH^*(Y)$.  Since $Gr(k, n)\cong Gr(n-k, n)$, we can always assume $n\geq 2k$.

\subsection{Characterization  of $H^*(Y)$ of Hodge--Tate type}
Due to Proposition \ref{prop:hodge}, we start with the study of the Hodge diamond of $Y$, which   has its own interest  in classical algebraic geometry. One key ingredient is the   combinatorial characterization  of nonvanishing cohomology by Snow \cite{Snow86} from the parabolic Borel--Weil Theorem by Bott \cite{Bott57}.

Denote by $\mathcal{P}_{k, n}:=\{\lambda=(\lambda_1, \cdots, \lambda_k)\in \mathbb{Z}^k\mid n-k\geq \lambda_1\geq \cdots \geq \lambda_k\geq 0\}$. Denote $|\lambda|:=\sum_{i}\lambda_i$.
The hook length of a cell  in the Young diagram of a partition $\lambda$   is defined to be the total number of cells which are either directly to the right or directly below the  cell   together with the cell.

\begin{prop}
[\protect{\cite[Section 3.1 (2)]{Snow86}}]
\label{prop: Snow}
For $\ell\geq 0$, we have
$$H^j(Gr(k,n),\Omega^p(\ell))\neq 0$$
if and only if there exists  $\lambda\in\mathcal{P}_{k, n}$ of $p$ cells with no cell of hook length $\ell$ such that
 $j$ equals the number of cells in $\lambda$ of hook length larger than $\ell$.
\end{prop}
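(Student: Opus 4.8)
The plan is to reconstruct the Borel--Weil--Bott computation behind \cite{Snow86}, in three steps. \emph{Step 1: reduce to irreducible homogeneous bundles.} Writing $\mathcal{S},\mathcal{Q}$ for the tautological sub- and quotient bundles on $Gr(k,n)$, one has $\Omega^{1}=\mathcal{S}\otimes\mathcal{Q}^{\vee}$ and $\mathcal{O}(1)=\det\mathcal{S}^{\vee}$ (with the ``quotient'' convention for $Gr(k,n)$ the roles of $\mathcal{S}$ and $\mathcal{Q}$, and of $\lambda$ and $\lambda^{t}$ below, are interchanged, which changes nothing). Cauchy's formula for exterior powers then gives
$$\Omega^{p}(\ell)\ \cong\ \bigoplus_{\lambda}\ \mathbb{S}_{\lambda}(\mathcal{S})\otimes\mathbb{S}_{\lambda^{t}}(\mathcal{Q}^{\vee})\otimes\mathcal{O}(\ell),$$
the sum running over $\lambda\in\mathcal{P}_{k,n}$ with $|\lambda|=p$ — the condition ``at most $k$ rows and at most $n-k$ columns'' being exactly what makes both Schur functors nonzero. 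Each summand is an irreducible $GL_{n}$-homogeneous bundle, and distinct $\lambda$ yield cohomology modules with distinct $GL_{n}$-highest weights, so the direct sum has no cancellation and one may treat a single summand at a time.

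\emph{Step 2: apply Bott's theorem to each summand.} For the summand attached to $\lambda$ I would read off its $P$-dominant highest weight, add the Weyl vector $\rho=(n-1,n-2,\dots,1,0)$, and obtain a sequence of $n$ integers. Encoding $\lambda$ by its beta-set $B_{\lambda}=\{\lambda_{i}+k-i:1\le i\le k\}\subseteq\{0,1,\dots,n-1\}$ and writing $\bar B_{\lambda}$ for its complement in $\{0,\dots,n-1\}$, a direct computation identifies this sequence (up to a convention-dependent affine reparametrization, which affects neither of the two features used below) with the concatenation
$$\big((n-1+\ell)-B_{\lambda}\big)\ \sqcup\ \big((n-1)-\bar B_{\lambda}\big),$$
each block listed in decreasing order. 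By Bott's theorem \cite{Bott57} the summand has nonzero cohomology if and only if these $n$ integers are pairwise distinct, and then its cohomology is concentrated in the single degree equal to the number of inversions of the sequence (the length of the shortest Weyl-group word straightening it to strictly decreasing order); since the two blocks are each internally decreasing, that number of inversions is simply $\#\{(b,c)\in B_{\lambda}\times\bar B_{\lambda}:(n-1+\ell)-b<(n-1)-c\}=\#\{(b,c):b-c>\ell\}$.

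\emph{Step 3: translate into hook lengths.} This is now pure combinatorics. By the classical beta-number/hook-length dictionary, the multiset of hook lengths of $\lambda$ is precisely $\{\,b-c:b\in B_{\lambda},\ c\in\bar B_{\lambda},\ b>c\,\}$. Since $\ell\ge 0$ this yields at once: the $n$ integers in Step 2 are distinct if and only if no pair has $b-c=\ell$, i.e. if and only if $\lambda$ has no cell of hook length $\ell$; and, when they are distinct, the inequality $b-c>\ell$ automatically forces $b>c$, so the inversion count from Step 2 equals the number of cells of $\lambda$ whose hook length exceeds $\ell$. Feeding this back into the direct sum of Step 1 gives exactly the asserted criterion for $H^{j}(Gr(k,n),\Omega^{p}(\ell))\neq 0$.

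\emph{Main obstacle.} The substance lies in Steps 2--3, and the only real difficulty is bookkeeping: pinning down the exact shape of the Bott sequence (hence the precise places where $\ell$, $B_{\lambda}$ and $\bar B_{\lambda}$ enter), keeping the several highest-weight/dominance conventions in Bott's theorem straight, and separating the role of hook length equal to $\ell$ (governing vanishing) from that of hook length exceeding $\ell$ (governing the cohomological degree). The hypothesis $\ell\ge 0$ is exactly what lets one discard the ``negative'' Bott chambers and guarantees the implication $b-c>\ell\Rightarrow b>c$ used in Step 3.
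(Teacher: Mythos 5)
The paper does not give a proof of this proposition: it is quoted directly from Snow \cite[Section 3.1 (2)]{Snow86}, so there is no in-paper argument to compare against. Your three-step reconstruction---Cauchy's formula $\Omega^p(\ell)\cong\bigoplus_{|\lambda|=p}\mathbb{S}_\lambda(\mathcal{S})\otimes\mathbb{S}_{\lambda^t}(\mathcal{Q}^\vee)\otimes\mathcal{O}(\ell)$, Bott's theorem applied summand-by-summand, and the beta-set/hook-length dictionary $\{\text{hook lengths of }\lambda\}=\{b-c: b\in B_\lambda,\, c\in\bar B_\lambda,\, b>c\}$ turning the regularity condition into ``no cell of hook length $\ell$'' and the inversion count into ``number of cells of hook length $>\ell$''---is correct, and is in substance exactly Snow's own derivation; the hypothesis $\ell\ge 0$ is used precisely as you say, both to exclude a collision at $b-c=\ell$ from occurring with $c>b$ and to make $b-c>\ell\Rightarrow b>c$ automatic. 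One small remark: the observation in Step~1 that distinct $\lambda$ give distinct $GL_n$-highest weights, ``so the direct sum has no cancellation,'' is not needed---sheaf cohomology already commutes with finite direct sums, so $H^j(\Omega^p(\ell))\neq 0$ iff some summand has $H^j\neq 0$---but it is harmless.
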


\begin{remark}
Recall a smooth projective variety $X$ is said to satisfy Bott vanishing if
$$H^{j}(X,\Omega_X^p\otimes L)=0\qquad j>0,p\geq 0$$
for any ample line bundle $L$.
However, Bott vanishing fails for complex Grassmannians other than projective spaces, see \cite[Section 4.3]{BTLM}. We refer to \cite{Be25, Fon} for the failure of Bott vanishing for certain generalized Grassmannians of general Lie type.
\end{remark}

We call a partition $\lambda$ a \textit{$j$-core partition}, if $j$ does not appear as the hook length of cells in $\lambda$.
We first assume Lemma \ref{lem:comblemm},  which  will be proved  in Section \ref{section:lemmaproof} in purely combinatorial way.

\begin{lemma}\label{lem:comblemm}
For $3\leq k\leq n/2$, there exists $(\lambda, i)\in  \mathcal{P}_{k, n}\times [1, n-1]$ such that $|\lambda|\geq k(n-k)-i$ and $\lambda$ is an $(n-i)$-core partition if and  only if
$(k,n)\in \{(3,6),(4,8),(3,9)\}$.
 \end{lemma}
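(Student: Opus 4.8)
The plan is to reformulate the existence statement in terms of how large an $(n-i)$-core partition inside the $k \times (n-k)$ box can be, and then to push the analysis through a careful study of hook lengths along the boundary path of a Young diagram. Set $\ell = n - i$, so $i \in [1,n-1]$ corresponds to $\ell \in [1,n-1]$, and we are asking: for which $(k,n)$ with $3 \le k \le n/2$ does there exist an $\ell \in [1,n-1]$ and an $\ell$-core $\lambda \in \mathcal{P}_{k,n}$ with $|\lambda| \ge k(n-k) - (n-\ell)$, equivalently $k(n-k) - |\lambda| \le n - \ell$. The quantity $c(\lambda) := k(n-k) - |\lambda|$ is the number of cells in the complement of $\lambda$ inside the box. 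So the lemma is equivalent to: there is an $\ell$-core in the box whose complement has at most $n - \ell$ cells. First I would record the easy direction by exhibiting explicit partitions for $(3,6),(4,8),(3,9)$ — for each, one writes down a single near-rectangular $\lambda$, checks it is an $\ell$-core for an appropriate $\ell$, and checks the size inequality; this is a finite verification.

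For the harder (only-if) direction, the key structural tool is the description of hook lengths via the boundary lattice path of $\lambda$ inside the box. Encode $\partial\lambda$ as a word of $k$ down-steps and $n-k$ right-steps; the hook lengths occurring in $\lambda$ are exactly the "distances" $j - i$ for pairs consisting of a right-step in position $i$ followed later by a down-step in position $j$ (the standard beads-on-a-runner / abacus description). Being an $\ell$-core means no such pair is at distance exactly $\ell$. The complement $c(\lambda)$ being small forces $\lambda$ to be close to the full box, i.e. the path must start with many down-steps and end with many right-steps; quantitatively, if $c(\lambda) \le n - \ell$ then $\lambda$ contains the rectangle missing only a small "staircase" near the anti-diagonal corner. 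I would then argue that such a nearly-full $\lambda$ necessarily realizes every small hook length: the cell in row $1$, column $1$ has hook length $n - 1 - (\text{corner deficiency})$, and as one moves the missing staircase around one produces hook lengths filling an interval containing $\ell$ unless $(k,n)$ is tiny. Concretely I expect to show: if $\lambda$ is an $\ell$-core in the box with $c(\lambda) \le n-\ell$, then either $k \le 2$, or $n$ is small, and a short case check over the finitely many remaining $(k,n)$ (those with $3 \le k \le n/2$ and $n$ below an explicit bound coming from the inequality) leaves exactly $(3,6),(4,8),(3,9)$.

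The main obstacle is making the "nearly-full partitions cannot be $\ell$-cores" step precise and tight enough to cut the infinite family $3 \le k \le n/2$ down to a genuinely finite check, rather than merely an asymptotic statement. The delicate point is that $\ell$ ranges over all of $[1,n-1]$, so being an $\ell$-core for *some* $\ell$ is a weak condition when $\ell$ is allowed near $n-1$; one must use the size constraint $c(\lambda) \le n - \ell$ jointly with the core condition — when $\ell$ is large the partition is forced to be extremely full, and when $\ell$ is small the core condition is very restrictive, and the two regimes must be balanced. I anticipate organizing this as: (a) if $\ell \ge$ some linear function of $n$, the box minus at most $n-\ell$ cells still contains a $1\times \ell$ and an $\ell \times 1$ hook configuration at distance $\ell$, contradiction unless $k$ or $n-k$ is small, i.e. $k \le 2$ (excluded) or $n - k$ small (forcing $n \le 2k$, hence $n = 2k$, and then a direct check); (b) if $\ell$ is small, count cells of hook length $\ell$ in a nearly-full $\lambda$ directly and show it is positive except in the listed cases. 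Combining (a) and (b) and doing the residual finite search finishes the proof; the bookkeeping in (a)–(b) is where essentially all the work lies, and I would keep it to the clean boundary-path formulation to avoid getting lost in Young-diagram casework.
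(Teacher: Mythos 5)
Your plan correctly identifies the right combinatorial machinery — the abacus/boundary-word encoding of hook lengths is exactly what the paper uses (via the bijection $\lambda\mapsto\{a_1,\dots,a_k\}$ with $a_i=\lambda_i+k-i+1$ and the description of the hook-length multiset in terms of this set), and your proposed case split by the size of $\ell=n-i$ is the same organizing principle as the paper's Cases~I ($\ell\le 2$), II ($\ell\ge 3$, $i<\ell$), III ($\ell\ge 3$, $i\ge\ell$). The easy direction by explicit verification is also fine.

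However, the hard direction as written has a genuine gap: you describe the \emph{shape} of an argument but do not carry out the inequalities that are the actual content of the proof. You write that you ``expect to show'' and ``anticipate organizing this as (a)--(b)'' and acknowledge explicitly that ``the bookkeeping in (a)--(b) is where essentially all the work lies.'' That bookkeeping is not filler; it is the proof. In the paper's argument, each case requires a specific decomposition ($A=B\sqcup B'\sqcup C$ when $i<\ell$; an injection $\phi:M\to\{1,\dots,\ell\}\cap A$ when $i\ge\ell$) and then a delicate chain of inequalities that, after summing the $a_i$'s and comparing against the threshold $n+(n-1)+\cdots+(n-k+1)-i$, forces very rigid conclusions such as $k=2b$, $|M|=1$, or ``all inequalities are equalities,'' which then pin down the finitely many triples $(k,n,i)$. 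None of this appears in your sketch, and it is not obvious that your stated heuristics will yield it.

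Moreover, your governing intuition — that the size constraint forces $\lambda$ to be ``nearly full'' so that it ``necessarily realizes every small hook length'' — is not quantitatively accurate for the cases that actually occur. For example $\lambda=(6,4,2)\subset\mathcal{P}_{3,9}$ is a $3$-core with $|\lambda|=12$, i.e.\ a complement of size $6$ out of $18$, which is a third of the box, not a small corner staircase; similarly $(4,4,2,2)\subset\mathcal{P}_{4,8}$ is a $4$-core with complement of size $4$ out of $16$. So the phenomenon is not ``nearly-full partitions are almost never $\ell$-cores''; rather, one needs to exploit the interaction between the $\ell$-core condition and the size bound much more carefully than your part~(a)/(b) dichotomy suggests, and the intermediate regime is exactly where the surviving $(k,n,i)$ live. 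As it stands, the proposal would need the case analysis written out in full — with the estimates actually proved — before it could be accepted; the outline alone does not establish the lemma.
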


\begin{example}\label{ex:kni}
Below are    partitions $(3,2,1),(4,3,2,1),  (6,4,2), (4,4,2,2)$, whose cells are put their hook length.  For example, the partition  $(6,4,2)$ has 12 cells  with no cell of hook length $3$, and has 6 cells of hook length  larger than 3. Thus  $H^{6}(Gr(3,9),\Omega^{12}(3))\neq 0$ by Proposition \ref{prop: Snow}
    $$
\begin{array}{c@{\qquad }c@{\qquad }c@{\qquad }c@{\qquad }c}
\lambda &
\begin{array}{|c|c|c|}\hline
5&3&1\\\hline
3&1\\\cline{1-2}
1\\\cline{1-1}
\end{array}&
\begin{array}{|c|c|c|c|}\hline
7&5&3&1\\\hline
5&3&1\\\cline{1-3}
3&1\\\cline{1-2}
1\\\cline{1-1}
\end{array}
&
\begin{array}{|c|c|c|c|c|c|}\hline
8&7&5&4&2&1\\\hline
5&4&2&1\\\cline{1-4}
2&1\\\cline{1-2}
\end{array}
&
\begin{array}{|c|c|c|c|}\hline
7&6&3&2\\\hline
6&5&2&1\\\hline
3&2\\\cline{1-2}
2&1\\\cline{1-2}
\end{array}\\\\
(k,n,i)&
(3,6,4)&(4,8,6)&(3,9,6)&(4,8,4)
\end{array}$$
As we will see from the proof of Lemma \ref{lem:comblemm}, they are the only $(n-i)$-core partitions.
\end{example}

\begin{lemma}\label{lemma: nk}
    For  $n\geq 2k$,  $k(n-k)>2n$ holds if and only if one of the following holds:
{\upshape $$
\mbox{(i) } k\geq 5;  \qquad \mbox{(ii) }
k=4 \text{ and }n\geq 9; \qquad \mbox{(iii) }
 k=3 \text{ and } n\geq 10.
$$
}
\end{lemma}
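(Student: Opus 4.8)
The plan is to rewrite the inequality $k(n-k) > 2n$ in the equivalent form $n(k-2) > k^2$ and to analyze it by cases on $k$. For $k=1$ and $k=2$ the left-hand side is $\leq 0$ while $k^2 > 0$, so the inequality never holds; this settles the ``only if'' direction for these small values and shows they do not appear in the list.

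For $k\geq 3$ the inequality is equivalent to $n > \frac{k^2}{k-2}$, and I would exploit the identity $\frac{k^2}{k-2} = (k+2) + \frac{4}{k-2}$ to compare this threshold with the standing hypothesis $n\geq 2k$. The decisive elementary computation is
$2k - \left((k+2) + \frac{4}{k-2}\right) = \frac{(k-2)^2 - 4}{k-2} = \frac{k(k-4)}{k-2}$,
which is strictly positive for $k\geq 5$, zero for $k=4$, and strictly negative for $k=3$.

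From this the three regimes are immediate. When $k\geq 5$ the threshold $\frac{k^2}{k-2}$ lies strictly below $2k\leq n$, so $k(n-k)>2n$ holds for every admissible $n$, giving case (i). When $k=4$ the threshold equals $8=2k$, so among $n\geq 8$ precisely those with $n\geq 9$ satisfy the strict inequality, giving case (ii). When $k=3$ the threshold is $9$, which already exceeds $2k=6$, so the inequality holds exactly for $n\geq 10$, giving case (iii). Combining the cases yields the stated equivalence in both directions.

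There is no genuine obstacle here: the argument is a short case analysis with one elementary inequality. The only point requiring care is keeping track of strictness at the boundary values $(k,n)=(4,8)$ and $(k,n)=(3,9)$, where $k(n-k)=2n$ holds with equality and hence these pairs are correctly excluded from the list.
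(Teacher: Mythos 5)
Your proof is correct, and it takes essentially the same approach as the paper: rewrite $k(n-k)>2n$ as a linear condition $(k-2)n>k^2$, use the standing hypothesis $n\geq 2k$ to dispose of $k\geq 5$ (your identity $k^2/(k-2)=(k+2)+4/(k-2)$ amounts to the paper's computation $(k-2)(2k)-k^2=k^2-4k$), and handle $k=3,4$ directly.
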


\begin{proof}
    For $k\geq 5$, $k(n-k)-2n=(k-2)n-k^2\geq (k-2)2k -k^2=k^2-4k>0$. For $k\leq 4$, by direct calculation,
 $k(n-k)>2n$ if and only if   $(k=3, n\geq 10)$ or $(k=4, n\geq 9)$ holds.
\end{proof}

\begin{lemma}\label{lem:vanishing}
Assume $\dim X>2n$.
Then for any $0\leq p\leq n$, we have
$$H^{s}(X,\Omega^{\dim X-j}_X(p-j))=0\qquad
\text{for all $s$ and $1\leq j<p$}.$$
\end{lemma}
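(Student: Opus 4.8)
The plan is to deduce the statement directly from Snow's nonvanishing criterion (Proposition \ref{prop: Snow}) together with the combinatorial Lemma \ref{lem:comblemm}; apart from a translation of indices, no further argument is needed.

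First I would unpack the hypothesis $\dim X>2n$. Since $X=Gr(k,n)$ with $n\geq 2k$, one has $\dim X=k(n-k)$, and $\dim X=k(n-k)<2n$ whenever $k\leq 2$; hence $\dim X>2n$ forces $k\geq 3$, and then $3\leq k\leq n/2$ (cf. Lemma \ref{lemma: nk} for the precise list of such $(k,n)$), so the hypotheses of Lemma \ref{lem:comblemm} are in force. Moreover each of the three exceptional pairs $(3,6),(4,8),(3,9)$ satisfies $k(n-k)\leq 2n$ (namely $9<12$, $16=16$, $18=18$), so none of them occurs under our assumption. By Lemma \ref{lem:comblemm} it follows that \emph{there is no} pair $(\lambda,i)\in\mathcal{P}_{k,n}\times[1,n-1]$ with $|\lambda|\geq k(n-k)-i$ and $\lambda$ an $(n-i)$-core partition. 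This is the only form in which I will use the hypothesis.

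Next I would argue by contradiction. Suppose $H^{s}(X,\Omega^{\dim X-j}_X(p-j))\neq 0$ for some $s$, some $0\leq p\leq n$, and some $1\leq j<p$; set $\ell:=p-j$, so $1\leq \ell\leq n-1$. Applying Proposition \ref{prop: Snow} with exterior power $\dim X-j$ and twist $\ell$ produces a partition $\lambda\in\mathcal{P}_{k,n}$ with $|\lambda|=\dim X-j=k(n-k)-j$ cells and with no cell of hook length $\ell$, i.e. an $\ell$-core. Put $i:=n-\ell=n-p+j$. Then $n-i=\ell$, so $\lambda$ is an $(n-i)$-core; from $j\geq 1$ and $p\leq n$ we get $i\geq j\geq 1$; from $j<p$ we get $i\leq n-1$; and from $i\geq j$ we get $|\lambda|=k(n-k)-j\geq k(n-k)-i$. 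Thus $(\lambda,i)$ is a pair of the type ruled out in the previous paragraph, a contradiction. Hence $H^{s}(X,\Omega^{\dim X-j}_X(p-j))=0$ for every admissible $s,p,j$, which is the assertion.

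The argument above is entirely elementary; the substantive content has been packaged into Lemma \ref{lem:comblemm}, whose proof is deferred to Section \ref{section:lemmaproof}, and which I expect to be the real obstacle. Within the present proof the only point requiring care is the index bookkeeping --- matching Snow's parameters $(p_{\mathrm{Snow}},j_{\mathrm{Snow}},\ell)$ to $(\dim X-j,\,s,\,p-j)$, and verifying that the auxiliary index $i=n-p+j$ lands in the admissible window $[1,n-1]$ while the size inequality $|\lambda|\geq k(n-k)-i$ holds. Both verifications reduce to the single chain of inequalities $1\leq j<p\leq n$.
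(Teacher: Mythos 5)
Your proof is correct and follows essentially the same route as the paper's: apply Snow's criterion (Proposition \ref{prop: Snow}) to obtain a partition $\lambda$, set $i=n-p+j$, check that $(\lambda,i)$ satisfies the conditions of Lemma \ref{lem:comblemm}, and observe that the resulting exceptional pairs $(3,6),(4,8),(3,9)$ all violate $k(n-k)>2n$. Your writeup is slightly more explicit than the paper's in one useful respect: you check up front that $\dim X>2n$ together with the standing assumption $n\geq 2k$ forces $3\leq k\leq n/2$, which is needed for Lemma \ref{lem:comblemm} to apply; the paper leaves this implicit.
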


\begin{proof}
Assume $H^{s}(X,\Omega^{\dim X-j}_X(p-j))\neq 0$ for some $(s, j, p)$. Then by Proposition \ref{prop: Snow}, there exists $\lambda\in \mathcal{P}_{k, n}$ with $|\lambda|=k(n-k)-j$, having no cell of hook length $p-j$ and having $s$ cells of hook length larger than $p-j$. Since $1\leq p-j<n$, then for $i:=n-p+j\in [j, n-1]$,  $|\lambda|\geq k(n-k)-i$ and $\lambda$ is an $(n-i)$-corn partition. Thus by  Lemma \ref{lem:comblemm},
 $(k, n)\in \{(3, 6),  (3, 9), (4, 8)\}$. This contradicts to the assumption $\dim X=k(n-k)>2n$.
\end{proof}

We have the following proposition, whose  proof is a refined version of Griffiths' theory (\cite{Griff}, see also \cite[Section 6.1.2]{Voisin}) with more precise control of cohomology vanishing. A similar argument appears in \cite[Theorem 2.2]{DV10}.

\begin{prop}\label{prop: nonhodge}
Assume $k(n-k)>2n$.
Then
$$h^{k(n-k)-p,p-1}(Y)=\begin{cases}
0, & p<n,\\
1, & p=n.
\end{cases}$$
In particular, $H^*(Y)$ is not of Hodge--Tate type.
\end{prop}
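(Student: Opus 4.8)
The plan is to compute the Hodge numbers $h^{k(n-k)-p,p-1}(Y)$ of the smooth hyperplane section $Y \subset X = Gr(k,n)$ using the standard residue/Griffiths description of primitive cohomology of a hypersurface, combined with the vanishing statements for twisted sheaf cohomology on $X$ established in Lemma \ref{lem:vanishing}. Recall that for a smooth ample hypersurface $Y = \{f = 0\} \subset X$ with $\mathcal{O}_X(Y) = \mathcal{O}_X(1)$, there is a long exact sequence relating $H^\bullet(X, \Omega^q_X(\ast))$, $H^\bullet(Y, \Omega^q_Y)$ and the residue of meromorphic forms with poles along $Y$. Concretely, the primitive part of $H^{a,b}(Y)$ (the part not coming from $H^\bullet(X)$ via $j^\ast$) is governed by the cohomology of the Koszul-type complex built from multiplication by $f$ on the sheaves $\Omega^p_X(\ell)$; one extracts $H^{k(n-k)-p,\,p-1}_{\mathrm{prim}}(Y)$ from a subquotient of groups of the form $H^s\big(X,\Omega^{\dim X - j}_X(p - j)\big)$ for $0 \le j \le p$. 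This is exactly the setup of \cite[Section 6.1.2]{Voisin} and \cite[Theorem 2.2]{DV10}; the point of our refinement is to keep track of which twists actually appear.

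The key steps, in order, are as follows. First, I would write down the relevant piece of the Griffiths/Carlson--Griffiths spectral sequence (or the adjunction long exact sequences coming from $0 \to \Omega^q_X(\ell - 1) \to \Omega^q_X(\ell) \to \Omega^q_X(\ell)|_Y \to 0$ and $0 \to \Omega^{q-1}_Y(\ell-1) \to \Omega^q_X(\ell)|_Y \to \Omega^q_Y(\ell) \to 0$) computing $H^{k(n-k)-p,\,p-1}(Y)$ in terms of $H^s(X,\Omega^{\dim X - j}_X(p-j))$. Second, for $p < n$ I would invoke Lemma \ref{lem:vanishing} (which applies precisely because $\dim X = k(n-k) > 2n$) to conclude that \emph{all} the intermediate groups $H^s(X, \Omega^{\dim X - j}_X(p - j))$ with $1 \le j < p$ vanish, so the spectral sequence/exact sequences degenerate and force $h^{k(n-k)-p,\,p-1}(Y) = 0$; I also need to handle the boundary contributions $j = 0$ and $j = p$ separately, using $H^\bullet$ of $\Omega^{\dim X}_X$ and $\Omega^{\dim X - p}_X$ on $X$ itself (Borel--Weil--Bott / Snow, Proposition \ref{prop: Snow}) — for $p < n$ these also vanish in the relevant degree. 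Third, for $p = n$ I would identify the one surviving contribution: the group $H^{s}(X, \Omega^{\dim X}_X(0)) = H^s(X, K_X)$ is one-dimensional in the top degree $s = \dim X$ by Serre duality (since $H^0(X,\mathcal{O}_X) = \mathbb{C}$), and this produces exactly the single class accounting for $h^{0,n-1}(Y) = h^{k(n-k)-n,\,n-1}(Y) = 1$ — this is the familiar phenomenon that a hyperplane section of a Fano variety whose anticanonical class is not too positive relative to $\mathcal{O}(1)$ acquires an extra "Calabi--Yau type" Hodge class. Finally, the statement that $H^\ast(Y)$ is not of Hodge--Tate type is immediate: the class in $H^{k(n-k)-n,\,n-1}(Y)$ is off-diagonal since $k(n-k) - n \ne n - 1$ (indeed $k(n-k) > 2n > 2n - 1$), so $H^\ast(Y) \ne \bigoplus_p H^{p,p}(Y)$.

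The main obstacle will be step one: setting up the bookkeeping of the Griffiths spectral sequence cleanly enough that the indices $(s,j,p)$ appearing match exactly the hypotheses of Lemma \ref{lem:comblemm}/Lemma \ref{lem:vanishing}, and making sure the two "edge" cases $j=0$ and $j=p$ are treated correctly rather than swept into the generic vanishing. In particular one must be careful that the residue map lands in \emph{primitive} cohomology and that the ambient (Lefschetz) part $j^\ast H^\bullet(X)$, which is purely of $(p,p)$-type since $X$ is a Grassmannian, does not interfere with the off-diagonal computation — this is why we look at $h^{k(n-k)-p,p-1}$ with $p-1 \ne k(n-k)-p$ in the first place, so the ambient part contributes nothing. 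A secondary, more routine, subtlety is checking that the twists $p - j$ with $1 \le j < p$ and $1 \le p - j < n$ genuinely fall within the range $[1, n-1]$ needed to apply Lemma \ref{lem:comblemm}, which is exactly the reindexing $i = n - p + j$ carried out in the proof of Lemma \ref{lem:vanishing}; once that alignment is in place, the argument is a formal consequence of the vanishing already proved.
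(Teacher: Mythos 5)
Your proposal follows essentially the same route as the paper: the Griffiths pole-order/residue computation of the Hodge filtration (Voisin Ch.~6, Debarre--Voisin), reduced to the vanishing of the intermediate twisted-cohomology groups $H^s(X,\Omega_X^{\dim X-j}(p-j))$, $1\le j<p$, supplied by Lemma~\ref{lem:vanishing}. The paper implements the bookkeeping slightly differently: it works with the complement $U=X\setminus Y$, identifies $F^{\dim X-p+1}H^{\dim X}(U)$ with $H^{p-1}(X,\Omega_X^{\dim X-p+1}(\log Y)_{\mathrm{closed}})$, and resolves the sheaf of closed log forms by the complex
$0\to \Omega_X^{\dim X-p+1}(Y)_{\mathrm{closed}}\to\Omega_X^{\dim X-p+1}(Y)\to\cdots\to K_X(pY)\to 0$,
so that Lemma~\ref{lem:vanishing} lets one telescope down to $H^0(X,K_X(pY))=H^0(X,\mathcal{O}_X(p-n))$, which is $0$ for $p<n$ and $\mathbb{C}$ for $p=n$; the residue/Gysin map then transfers this to $F^{\dim X-p}H^{\dim X-1}(Y)$.

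One small slip worth flagging: in your $p=n$ edge case you identify the surviving group as $H^{\dim X}(X,K_X)$ (i.e.\ $j=p$, $s=\dim X$), but in the telescoped resolution the surviving term actually sits at the \emph{other} edge, $j=0$, $s=0$, namely $H^0(X,K_X(nY))\cong H^0(X,\mathcal{O}_X)$. The two are Serre dual, so your dimension count is right, but the class really lives in the $H^0$ of the most twisted sheaf in the complex, not in the top cohomology of $K_X$; when you actually run the adjunction sequences you should find that it is $\mathcal{O}_X(p-n)$ whose sections control $h^{0,n-1}_{\mathrm{prim}}(Y)$, which is the clean reason the answer is $\delta_{p,n}$. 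Apart from that, your outline is the paper's argument.
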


\begin{proof}
Let $U=X\setminus Y$ be the complement of $Y$.
Let $\Omega^p_X(\log Y)$ be the sheaf of logarithmic $p$-forms, i.e. meromorphic differential $p$-forms $\phi$ such that $\phi$ and $d\phi$ both have a pole of order at most $1$ along $Y$.
 By the proof of \cite[Theorem 6.5]{Voisin}, the Hodge filtration of $H^{\dim X}(U,\mathbb{C})$ is given by
$$F^{\dim X-p+1}H^{\dim X}(U,\mathbb{C})
=H^{p-1}(X,\Omega_X^{\dim X-p+1}(\log Y)_{\rm{closed}}).$$
Note that by definition, $\Omega_X^{\dim X-p+1}(\log Y)_{\rm{closed}}=\Omega_X^{\dim X-p+1}(Y)_{\rm{closed}}$.
We have the following exact sequence for $p<\dim X$
$$0\to 
\Omega_X^{\dim X-p+1}(Y)_{\rm{closed}}
\stackrel{\subset}\to 
\Omega^{\dim X-p+1}_X(Y)
\stackrel{d}\to 
\Omega^{\dim X-p+2}_X(2Y)
\stackrel{d}\to 
\cdots 
\stackrel{d}\to 
K_X(pY)
\to 
0.
$$
By the vanishing in Lemma \ref{lem:vanishing}, we can shift the degree, and hence for $1\leq p\leq n$,
$$F^{\dim X-p+1}H^{\dim X}(U)\cong H^0(X,K_X(pY)).$$
Then by the assumption $\dim X>2n$, we have
$$F^{\dim X-p}H^{\dim X-1}(F)\cong H^0(X,K_X(pY)).$$
It has dimension $0$ when $p<n$ and $1$ when $p=n$.
This gives the Hodge number.
\end{proof}

\bigbreak

\begin{thm}\label{mainthm:HodgehyperplaneGr}
Let $Y$ be a smooth hyperplane section of $Gr(k, n)$ where $n\geq 2k$. Then $H^*(Y)$ is of Hodge--Tate type if and only if
 either {\upshape (i) }
 $k\in \{1, 2\}$ or {\upshape (ii) } $k=3$ and $n\in \{6, 7, 8\}$ holds.
\end{thm}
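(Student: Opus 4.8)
The plan is to combine the cohomology-vanishing results established above with a direct verification in the finitely many remaining cases. By Proposition \ref{prop: nonhodge}, whenever $k(n-k)>2n$ the Hodge number $h^{k(n-k)-n,n-1}(Y)=1$ is nonzero and off-diagonal, so $H^*(Y)$ is not of Hodge--Tate type. By Lemma \ref{lemma: nk}, the inequality $k(n-k)>2n$ holds precisely when $k\geq 5$, or $k=4$ and $n\geq 9$, or $k=3$ and $n\geq 10$ (always under $n\geq 2k$). Hence the only pairs $(k,n)$ with $n\geq 2k$ that could possibly give Hodge--Tate $Y$ are: $k=1$ (all $n$), $k=2$ (all $n\geq 4$), $k=3$ with $n\in\{6,7,8,9\}$, and $k=4$ with $n=8$. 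It remains to decide these finitely many families.

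First I would dispatch the cases $k=1$ and $k=2$. For $k=1$ the hyperplane section $Y$ is a projective space $\mathbb{P}^{n-2}$, which is obviously Hodge--Tate. For $k=2$, classical geometry identifies a smooth hyperplane section of $Gr(2,2m)$ with the symplectic Grassmannian $C_m/P_2$ (which is homogeneous, hence Hodge--Tate), while for $Gr(2,2m+1)$ the section $Y$ is the quasi-homogeneous "odd symplectic Grassmannian"; in either case $H^*(Y)$ is known to be generated by algebraic classes (of Tate type), so $Y$ is Hodge--Tate. These facts are recalled in the introduction and can be cited directly.

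Next I would handle the borderline pairs $(k,n)\in\{(3,6),(3,7),(3,8),(3,9),(4,8)\}$, where $k(n-k)\le 2n$ so Proposition \ref{prop: nonhodge} does not apply. The key tool is Snow's criterion, Proposition \ref{prop: Snow}: the off-diagonal Hodge numbers of $Y$ are controlled, via the usual exact-sequence/residue analysis of $H^{\dim X}(X\setminus Y)$, by the groups $H^{s}(X,\Omega^{\dim X-j}_X(p-j))$ for $1\le j<p\le n$, equivalently by the existence of an $(n-i)$-core partition $\lambda\in\mathcal{P}_{k,n}$ with $|\lambda|\ge k(n-k)-i$ for some $i\in[1,n-1]$. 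Lemma \ref{lem:comblemm} asserts that such a partition exists exactly for $(k,n)\in\{(3,6),(3,9),(4,8)\}$ and does not exist for $(3,7),(3,8)$. So for $n\in\{7,8\}$, $k=3$, no obstructing cohomology group is nonzero and one concludes, by the shifting argument in the proof of Proposition \ref{prop: nonhodge} (now with the hypothesis $\dim X>2n$ weakened to the weaker bookkeeping that the relevant intermediate cohomology still vanishes), that $h^{p,q}(Y)=0$ for $p\ne q$; hence $Y$ is Hodge--Tate. Conversely, for $(k,n)\in\{(3,6),(3,9),(4,8)\}$ the core partition exhibited in Example \ref{ex:kni} produces a nonzero group $H^{s}(X,\Omega^{\dim X-j}_X(p-j))$ with $1\le j<p$, which by the residue exact sequences forces a nonzero off-diagonal Hodge number $h^{\dim X-p,p-1}(Y)\ne0$ (one reads off $s$ and $j$ from the partition); hence $Y$ is not Hodge--Tate. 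Combining: among $k=3$ only $n\in\{6,7,8\}$ are allowed by Lemma \ref{lemma: nk}, and of those only $n\in\{7,8\}$... wait — but the theorem claims $\{6,7,8\}$. I must be careful: the case $(3,6)$ must in fact be Hodge--Tate, so the core partition $(3,2,1)$ there, while valid combinatorially, must not actually obstruct Hodge--Tateness — the correct statement is that for $(3,6)$ the associated cohomology contributes an \emph{on-diagonal} class. So the refined claim to prove is: the core partitions for $(3,6)$ give $s$ equal to the "diagonal" value $\dim X-p$ (no off-diagonal contribution), whereas for $(3,9)$ and $(4,8)$ they do not. This is exactly the bookkeeping one extracts from Example \ref{ex:kni}.

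The main obstacle is therefore this last step: converting the combinatorial data of the core partitions in Lemma \ref{lem:comblemm}/Example \ref{ex:kni} into a precise statement about \emph{which} Hodge number of $Y$ is hit, and checking that for $(3,6)$ it lands on the diagonal while for $(3,9)$ and $(4,8)$ it lands strictly off-diagonal. Concretely, for each listed core partition $(\lambda,i)$ one sets $p-j$ equal to the missing hook length and $s$ equal to the number of cells of larger hook length, reads off $j=k(n-k)-|\lambda|$ and $p=n-i+j$, and then compares $s$ with $\dim X-p$: the section $Y$ fails Hodge--Tateness precisely when some choice gives $s\ne \dim X-p$. I would organize this as a short finite table, citing Example \ref{ex:kni} for the four partitions, and note that for $k\le 2$ Snow's criterion also shows (or one simply uses homogeneity) that no off-diagonal class ever appears. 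This completes the equivalence.
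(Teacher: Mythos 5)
Your outer structure — use Proposition \ref{prop: nonhodge} and Lemma \ref{lemma: nk} to discard all $(k,n)$ with $k(n-k)>2n$, cite the classical results for $k\in\{1,2\}$, and then decide the five borderline pairs $(3,6),(3,7),(3,8),(3,9),(4,8)$ by hand — matches the paper. Where you diverge is in how to decide the borderline cases: the paper computes the Hirzebruch $\chi_y$-genus of $Y$ directly via the $\lambda$-class and Bott--Lefschetz localization and then invokes hard Lefschetz, whereas you try to push the Snow/core-partition machinery through Lemma \ref{lem:comblemm} and Example \ref{ex:kni}. The proposed translation does not survive scrutiny. You suggest comparing $s$ with $\dim X - p$ and declaring that $Y$ fails Hodge--Tateness precisely when they differ, but this criterion is already wrong for $(3,6)$: there the unique core partition $(3,2,1)$ with $i=4$ gives (in the notation of Lemma \ref{lem:vanishing}) $j=9-6=3$, $p=n-i+j=5$, missing hook $\ell=2$, and $s=3$ (cells of hook $>2$), while $\dim X-p=4$. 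Thus $s\neq \dim X-p$, yet $Y\subset Gr(3,6)$ \emph{is} Hodge--Tate (its extra primitive class has type $(4,4)$). The degree $s$ of a nonvanishing group $H^{s}(X,\Omega^{\dim X-j}(p-j))$ does not by itself determine the Hodge type of the resulting contribution to $H^{\dim X-1}(Y)$; it only detects that the degree-shift isomorphism in the proof of Proposition \ref{prop: nonhodge} breaks at that spot.

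There is a second gap in the other direction. For $(3,7)$, where Lemma \ref{lem:comblemm} gives the full intermediate vanishing, naively re-running the conclusion of Proposition \ref{prop: nonhodge} would output $h^{\dim X-n,n-1}(Y)=h^{5,6}(Y)=1\neq 0$, contradicting Hodge--Tateness. The missing ingredient — and exactly what the hypothesis $\dim X>2n$ is used for — is the Gysin sequence $0\to H^{\dim X}(X)_{\rm prim}\to H^{\dim X}(U)\to H^{\dim X-1}(Y)(-1)\to 0$: for $\dim X=12$ even, the one-dimensional jump of $F^{\bullet}H^{\dim X}(U)$ at $p=n$ is entirely absorbed by the class of type $(6,6)$ in $H^{\dim X}(X)_{\rm prim}$ and contributes nothing off-diagonal to $H^{\dim X-1}(Y)$. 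You wave at this as ``weaker bookkeeping,'' but it is a genuine mixed-Hodge-structure argument, and without it the assertion that the vanishing forces $Y$ Hodge--Tate is unsupported. So the proposal requires substantial repair in both directions before it could replace the paper's direct $\chi_y$-genus computation.
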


\begin{proof}
By Proposition \ref{prop: nonhodge}, it remains to check the case when $k(n-k)\leq 2n$.
For $k=1,2$, the hyperplane sections are known to be Hodge--Tate.
Thus by Lemma \ref{lemma: nk}, it remains to  check the cases when $(k,n)$ belongs to $\{(3,6),\,(3,7),\,(3,8),\,(3,9),\,(4,8)\}$.

To compute the Hodge numbers, we consider the $\lambda$-class
$\lambda_y(Y)=\sum_{p\geq 0} y^p [\Omega_Y^p]\in K(Y)[y]$ in the polynomial ring of $y$ with coefficients in the $K$-theory  of $Y$ \cite{Hirzebruch}.
We have
$$(1+y\mathcal{O}(-Y))\cdot
\lambda_y(Y)=\lambda_y(X)|_Y.$$
So
$$\chi_y(Y):=
\sum_{p\geq 0} y^p \chi(Y,\Omega^p_Y)
=\chi(Y,\lambda_y(Y))=\chi\left(X,
\lambda_y(X)\frac{1-\mathcal{O}(-Y)}{1+y\mathcal{O}(-Y)}\right)\in \mathbb{Z}[y].$$
This can be obtained by using Bott--Lefschetz  localization formula and taking non-equivariant limit.
For $X=Gr(3,9)$, we have
\begin{align*}
\chi_y(Y)&
=-y^{17} + y^{16} - 2 y^{15} + 3 y^{14} - 4 y^{13} + 5 y^{12} - 7 y^{11} + 7 y^{10} - 6 y^{9} \\
&\qquad  + 6 y^{8} - 7 y^{7} + 7 y^{6} - 5 y^{5} + 4 y^{4} - 3 y^{3} + 2 y^{2} - y + 1.
\end{align*}
That is,
$$\begin{array}{|c|c|c|c|c|c|c|c|c|c|c|c|c|c|c|c|c|c|c|}\hline
p & 0 & 1 & 2 & 3 & 4 & 5 & 6 & 7 & 8 & 9 & 10 & 11 & 12 & 13 & 14 & 15 & 16 & 17\\\hline\hline
\chi(Y,\Omega_Y^p) &
1&-1&2&-3&4&-5&7&-7&6&-6&7&-7&5&-4&3&-2&1&-1\\\hline
\end{array}$$
By hard Lefschetz theorem, we have
\begin{itemize}
    \item[(1)] $h^{i, j}(Y)= 0$ unless $i=j$ or $i+j=\dim Y=17$;
    \item[(2)]
    when $i<\dim Y=17$, $h^{i,j}(Y)=h^{i, j}(X)$.
\end{itemize}
Comparing them with the Poincar\'e polynomial of $Gr(3,9)$, we  conclude when $i+j=17$,
$$h^{8, 9}=h^{9,8}=2,\qquad h^{i, j}=0
\text{ otherwise}.$$
In particular,  $H^*(Y)$ is not of Hodge--Tate type.
By the same methods, we can work out the other cases. The following   list their Hodge diamonds.
$$\begin{array}{c@{\qquad}c@{\qquad}c}
\scriptstyle
Y\subset Gr(3,6)&
\scriptstyle
Y\subset Gr(3,7)&
\scriptstyle
Y\subset Gr(3,8)\\
\begin{subarray}{c}
    1\\1\\2\\3\\4\\3\\2\\1\\1
\end{subarray}\quad
\begin{subarray}{c}
    1\\1\\2\\3\\3\\3\\3\\2\\1\\1
\end{subarray}\quad&
\begin{subarray}{c}
    1\\1\\2\\3\\4\\4\\4\\4\\3\\2\\1\\1
\end{subarray}\quad
\begin{subarray}{c}
    1\\1\\2\\3\\4\\4\\5\\4\\4\\3\\2\\1\\1
\end{subarray}\quad&
\begin{subarray}{c}
    1\\1\\2\\3\\4\\5\\6\\7\\6\\5\\4\\3\\2\\1\\1
\end{subarray}\quad
\begin{subarray}{c}
    1\\1\\2\\3\\4\\5\\6\\6\\6\\6\\5\\4\\3\\2\\1\\1
\end{subarray}\quad
\end{array}
\qquad
\begin{array}{c@{\qquad}c}
\scriptstyle Y\subset Gr(3,9)&
\scriptstyle Y\subset Gr(4,8)\\
\begin{subarray}{c}1\\1\\2\\3\\4\\5\\7\\7\\8\\[-0.75ex]2\,\,\,\, 2\\[-0.75ex]8\\7\\7\\5\\4\\3\\2\\1\\1\end{subarray}\quad
\begin{subarray}{c}1\\1\\2\\3\\4\\5\\7\\7\\8\\8\\
8\\7\\7\\5\\4\\3\\2\\1\\1\end{subarray}\quad&
\begin{subarray}{c}
1\\1\\2\\3\\5\\5\\7\\7\\[-0.75ex]3\,\,\, 3\\[-0.75ex]7\\7\\5\\5\\3\\2\\1\\1
\end{subarray}\quad
\begin{subarray}{c}
1\\1\\2\\3\\5\\5\\7\\7\\8\\7\\7\\5\\5\\3\\2\\1\\1
\end{subarray}\quad
\end{array}$$
 Hence, for $k\geq 3$, $H^*(Y)$ is of Hodge--Tate type if and only if $k=3$ and $n\in \{6, 7, 8\}.$
\end{proof}

\begin{remark}
The method in the proof is well-known and works for any $Gr(k, n)$  in principle, while the computation of $\chi_y(Y)$ is not efficient in practice.
The polynomial $\chi_y(Y)$ can be efficiently computed via the Pieri rule of motivic Chern classes of Schubert cells over Grassmannian \cite{MCPieri}.
The readers can try it online: \url{https://cubicbear.github.io/PluckerHodge.html}.
\end{remark}

\subsection{Quantum Pieri rule for Hodge--Tate hyperplane sections}
 \subsubsection{Quantum Pieri rule for $X$} Here we review  some   facts on $QH^*(X)$ (see e.g. \cite{Buc}).

For $\lambda\in\mathcal{P}_{k, n}$, the Schubert subvariety $X_\lambda\subset X$ of codimension $|\lambda|$, associated to a fixed   complete flag $E_\bullet$ of $\mathbb{C}^n$, is defined by
$X_\lambda=\{V\in X\mid \dim V\cap E_{n-k+i-\lambda_i}\geq i, \quad i=1, \cdots, k\}$. The Schubert classes $\sigma_\lambda:=P.D.([X_\lambda])\in H^{2|\lambda|}(X, \mathbb{Z})$ form an additive basis of $H^*(X, \mathbb{Z})$. The dual basis is given by $\{(\sigma_\lambda)^\vee=\sigma_{\lambda^\vee}\}_\lambda$, where $\lambda^\vee=(n-k-\lambda_k, \cdots, \cdots, n-k-\lambda_1)$ is the dual partition. We simply denote the special partitions $p=(p, 0, \cdots, 0)$ and $1^p=(1, \cdots, 1, 0, \cdots, 0)$ where there are $p$ copies of 1. There is an exact sequence $$0\to \mathcal{S}\to \mathbb{C}^n\to \mathcal{Q}\to 0$$
of tautological vector bundles over $X$.  The fiber of the tautological subbundle $\mathcal{S}$ at a point $V\in X$ is   given by the vector space $V$.
The Schubert classes $\sigma_p$ (resp. $\sigma_{1^p}$) coincide with the $p$-th Chern classes $c_p(\mathcal{Q})$ (resp. $(-1)^pc_p(\mathcal{S})$). Hence, they are related by
$c(\mathcal{S})\cup c(\mathcal{Q})=1$, i.e.
\begin{align}\label{relationGr}
\sum_{i=0}^m(-1)^i\sigma_{1^i}\sigma_{m-i}=0
\end{align}
for all $m\geq 1$. Here   we take the convention $\sigma_{\lambda}=0$, whenever  $\lambda\not\in \mathcal{P}_{k, n}$.
There is a canonical ring isomorphism, where $\sigma_a$'s are polynomials in $\sigma_{1^b}$'s  read off from \eqref{relationGr},
\begin{align}\label{QHGr}
    QH^*(X)\cong \mathbb{C}[\sigma_1, \cdots,  \sigma_{1^k}, q_X]/(\sigma_{n-k+1}, \cdots, \sigma_{n-1}, \sigma_n+(-1)^k q_X).
\end{align}
 The quantum multiplications by   $\sigma_p$'s (or $\sigma_{1^p}$'s) are   known as the quantum Pieri rule, and were first provided by Bertram. We refer to \cite[Proposition 4.2]{BCFF} for the following form.
\begin{prop}[Quantum Pieri rule]
   Let $1\leq p\leq k$ and $\lambda\in \mathcal{P}_{k, n}$. In $QH^*(X)$, we have
   $$\sigma_{1^p}\star \sigma_\lambda=\sum_\mu\sigma_\mu+ q_X\sum_\nu \sigma_\nu, $$
the first sum over $\mu$  obtained by adding $p$ cells to $\lambda$  with no two in the same row,
   and the second sum over $\nu$ with $|\nu|=|\lambda|+p-n$ such that $\tilde \lambda_1-1\geq \tilde \nu_1\geq \tilde \lambda_2-1\geq \tilde \nu_2\geq \cdots\geq \tilde \lambda_k-1\geq \tilde \nu_k\geq 0$, which occurs only if $\lambda_1=n-k$ and where $\tilde \lambda\in \mathcal{P}_{n-k, n}$ denotes the transpose of $\lambda$.
\end{prop}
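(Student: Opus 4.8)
The plan is to derive the quantum Pieri rule from the classical Pieri rule together with the structure of the quantum cohomology ring given in \eqref{QHGr}, following Bertram's original strategy via moduli spaces of maps (or, more elementarily, via the presentation \eqref{QHGr} and a dimension count). First I would recall the classical Pieri rule: in $H^*(X)$ one has $\sigma_{1^p}\cup\sigma_\lambda=\sum_\mu\sigma_\mu$, the sum over $\mu\supseteq\lambda$ obtained by adding $p$ cells with no two in the same row (i.e.\ a vertical $p$-strip), where the sum is constrained to $\mu\in\mathcal{P}_{k,n}$. The quantum product differs from this only by terms $q_X\sigma_\nu$ with $\deg q_X = n$, so by degree reasons the quantum correction appears precisely when $|\lambda|+p\geq n$, and each corrective term has $|\nu|=|\lambda|+p-n$. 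Thus the content of the proposition is the identification of exactly which $\sigma_\nu$ occur and with what (here, unit) coefficients.

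The key steps, in order, are: (i) write $\sigma_{1^p}\star\sigma_\lambda=\sum_\mu\sigma_\mu+q_X\sum_\nu c_\nu\sigma_\nu$ with $c_\nu\in\mathbb{Z}_{\geq 0}$ the relevant three-point genus-zero Gromov--Witten invariant $\langle\sigma_{1^p},\sigma_\lambda,\sigma_{\nu^\vee}\rangle_1^X$; (ii) interpret this invariant geometrically as a count of lines on $X$ (lines in the Plücker embedding) meeting three general Schubert varieties, using that $\overline{\mathcal{M}}_{0,3}(X,1)$ has the expected dimension $\dim X+n+0$; (iii) use the standard ``kernel-span'' description of a line in $Gr(k,n)$ — a line is the set of $k$-planes $A$ with $K\subset A\subset S$ for a fixed flag $K\subset S$ with $\dim K=k-1$, $\dim S=k+1$ — to translate the incidence conditions into a classical intersection problem on a two-step flag variety; (iv) reduce that classical computation to the classical Pieri rule on a smaller Grassmannian, which yields the transpose-interlacing condition $\tilde\lambda_1-1\geq\tilde\nu_1\geq\tilde\lambda_2-1\geq\cdots$ together with the multiplicity-one statement and the requirement $\lambda_1=n-k$ (the condition for $\lambda$ to admit such a kernel-span configuration contributing nontrivially). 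Alternatively, one can bypass the moduli-space geometry entirely: using the presentation \eqref{QHGr}, observe that $\sigma_{1^p}\star(-)$ is determined $q_X$-linearly, that setting $q_X=0$ recovers classical Pieri, and that associativity plus the relations $\sigma_{n-k+1}=\cdots=\sigma_{n-1}=0$, $\sigma_n=-(-1)^k q_X$ force the stated $q_X$-terms; one checks the formula is consistent with these relations and uniquely pinned down on the basis $\{\sigma_\lambda\}$.

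I would expect the main obstacle to be step (iv): extracting the precise transpose-interlacing inequalities and the sharp multiplicity-one statement from the classical intersection-theoretic count. The bookkeeping is delicate because the kernel-span construction naturally produces conditions on the \emph{transpose} partitions, and one must carefully track how the Schubert conditions on $X$ descend to conditions on the auxiliary flag data; the constraint ``$\lambda_1=n-k$'' — meaning the first row of $\lambda$ is full — emerges as exactly the nonemptiness condition for the fiber of the relevant incidence variety, and verifying that no curve classes of degree $\geq 2$ contribute (again a dimension count, using Fano-ness and $\mathrm{rk}\,\mathrm{Pic}(X)=1$) requires care. Once these points are settled, collecting terms gives the displayed formula. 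Since this is a well-documented classical result, I would in practice cite \cite[Proposition 4.2]{BCFF} or Bertram's paper for the detailed verification rather than reproduce it, and simply indicate the degree-and-dimension heuristic above as the conceptual reason the formula takes this shape.
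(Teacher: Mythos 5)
The paper gives no proof of this proposition at all: it simply states that the quantum Pieri rule was first provided by Bertram and refers to \cite[Proposition 4.2]{BCFF} for this form, which is exactly what your proposal concludes by doing. Your preliminary sketch of the degree-and-dimension heuristic, the kernel--span description of lines, and the reduction to a classical intersection count is a reasonable summary of the argument in the cited reference, so this matches the paper's treatment.
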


\subsubsection{Quantum Pieri rule for $Y$} Denote by $j: Y\hookrightarrow X$ the natural inclusion. It induces an algebra homomorphism $j^*: H^*(X)\to H^*(Y)$, with $j^*|_{H^{2i}(X)}$ an isomorphism of vector spaces for $0\leq i< \dim Y$.
Taking a line $\mathbb{P}^1\subset Y$, we have $H_2(Y, \mathbb{Z})=\mathbb{Z}[\mathbb{P}^1]$ and $H_2(X, \mathbb{Z})=\mathbb{Z}j_*[\mathbb{P}^1]$, and   simply denote by $d\in \mathbb{Z}$ a curve class under this identification.

 \begin{prop}[\protect{\cite[Proposition 5.13 and Theorem 5.11 (1)]{BePe}}] \label{prop: virfundamental} Assume $n\geq 2k>3$.
 \begin{enumerate}
     \item For $d\in \{1, 2\}$, $\overline{\mathcal{M}}_{0, n}(Y, d)$ is irreducible of expected dimension.
   \item
   For any $\alpha, \alpha'\in H^*(Y)$ and  $\gamma\in H^{2i}(X)$
      with $i<n-1$, we have $$\langle \alpha, \alpha', j^*\gamma\rangle_{1}^Y=\langle j_*\alpha, j_*\alpha', \gamma\rangle_{1}^X.$$

 \end{enumerate}
 \end{prop}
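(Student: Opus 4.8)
The plan is to deduce both statements from the explicit description of the universal family of hyperplane sections. Recall that $Y$ is cut out by a section $s$ of the very ample line bundle $\mathcal{O}_X(1)$ giving the Pl\"ucker embedding, and that for $d\in\{1,2\}$ the restriction map $H^0(X,\mathcal{O}_X(d))\to H^0(C,\mathcal{O}_C(d))$ is surjective with controlled kernel for every genus-zero stable map of degree $d$ whose image is a curve $C\subset X$; this is the key technical input behind the Benedetti--Perrin analysis. First I would set up $\overline{\mathcal{M}}_{0,0}(X,d)$ together with the universal curve $\mathcal{C}\to\overline{\mathcal{M}}_{0,0}(X,d)$ and the evaluation/forgetful maps, and observe that $\overline{\mathcal{M}}_{0,0}(Y,d)$ is the zero locus of the induced section of the bundle whose fiber over a stable map $f:C\to X$ is $H^0(C,f^*\mathcal{O}_X(1))$. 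Because $\mathcal{O}_X(1)$ is ample and the curves in question have small degree, $R^1\pi_*f^*\mathcal{O}_X(1)=0$, so this bundle has the expected rank, and its zero locus has expected dimension provided the section is sufficiently transverse --- which holds for $Y$ general. Irreducibility for $d=1,2$ then follows from irreducibility of $\overline{\mathcal{M}}_{0,0}(X,d)$ (resp. the space of lines/conics in $Gr(k,n)$, which is a homogeneous or nearly-homogeneous space, so irreducible) together with the fact that a general hyperplane section of an irreducible variety through which a suitably large linear system of sections passes is again irreducible; one uses here that the degree of the ample class on these moduli spaces is positive, so the incidence variety is a projective bundle and its general fiber is connected. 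This gives part (1); alternatively I would simply cite \cite[Proposition 5.13]{BePe} verbatim, since the statement as phrased is exactly theirs.

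For part (2), the mechanism is the quantum Lefschetz / functoriality of Gromov--Witten invariants under the inclusion $j:Y\hookrightarrow X$ restricted to the ambient part of the cohomology. Having established in (1) that $\overline{\mathcal{M}}_{0,n}(Y,1)$ is irreducible of expected dimension, its virtual class is the ordinary fundamental class and equals the Euler class of the obstruction bundle $E$ on $\overline{\mathcal{M}}_{0,n}(X,1)$ whose fiber is $H^0(C,f^*\mathcal{O}_X(1))$ --- this is the standard excess-intersection comparison, valid precisely because there is no $R^1$ and because the actual dimension matches the expected one. Concretely, writing $\iota:\overline{\mathcal{M}}_{0,n}(Y,1)\hookrightarrow \overline{\mathcal{M}}_{0,n}(X,1)$ for the inclusion and ${\rm ev}_i^X,{\rm ev}_i^Y$ for the evaluation maps (so ${\rm ev}_i^X\circ\iota = j\circ{\rm ev}_i^Y$), one has $\iota_*[\overline{\mathcal{M}}_{0,n}(Y,1)] = e(E)\cap[\overline{\mathcal{M}}_{0,n}(X,1)]$. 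Then
\begin{align*}
\langle \alpha,\alpha',j^*\gamma\rangle_1^Y
&= \int_{\overline{\mathcal{M}}_{0,n}(Y,1)} ({\rm ev}_1^Y)^*\alpha\cup({\rm ev}_2^Y)^*\alpha'\cup({\rm ev}_3^Y)^*j^*\gamma\\
&= \int_{\overline{\mathcal{M}}_{0,n}(X,1)} e(E)\cup({\rm ev}_1^X)^*j_*\alpha\cup({\rm ev}_2^X)^*j_*\alpha'\cup({\rm ev}_3^X)^*\gamma,
\end{align*}
where I used the projection formula on $\iota$ together with ${\rm ev}_i^X\circ\iota=j\circ{\rm ev}_i^Y$ and the fact that $j_*j^*(-)=(-)\cup[Y]=(-)\cup c_1(\mathcal{O}_X(1))$ pulls the twist $e(E)$ and the remaining classes into place. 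Finally, the obstruction bundle $E$ for degree-$1$ maps fits into an exact sequence relating $e(E)$ to $c_1(\mathcal{O}_X(1))$ pulled back via ${\rm ev}$; for a degree-$1$ curve $H^0(C,f^*\mathcal{O}_X(1))=H^0(\mathbb{P}^1,\mathcal{O}(1))$ is $2$-dimensional, and the standard computation (as in \cite{Kim,CoGi}) shows the right-hand integral collapses to $\langle j_*\alpha,j_*\alpha',\gamma\rangle_1^X$ once the dimension/degree bookkeeping is done, using the hypothesis $i<n-1$ to guarantee the relevant dimension constraint places the integrand in top degree without extra $q$-corrections.

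The main obstacle I anticipate is the excess-intersection comparison in part (2): one must verify carefully that $\overline{\mathcal{M}}_{0,n}(Y,1)$ sits inside $\overline{\mathcal{M}}_{0,n}(X,1)$ with the obstruction bundle being exactly $\pi_*{\rm ev}^*\mathcal{O}_X(1)$ and no higher obstruction, i.e. that every genus-$0$ degree-$1$ stable map to $Y$ deforms inside $X$ unobstructedly and that the boundary strata (reducible domains) do not contribute excess --- this is where the smallness of the degree and the convexity-type vanishing $H^1(C,f^*\mathcal{O}_X(1))=0$ for all genus-$0$ degree-$1$ stable maps is essential, and it is exactly the content hidden in \cite[Theorem 5.11(1)]{BePe}. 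Given the precise references available, the cleanest route is to invoke \cite[Proposition 5.13, Theorem 5.11(1)]{BePe} for the geometric inputs and then run the projection-formula argument above; the constraint $i<n-1$ is what keeps us in the range where $j^*$ is an isomorphism onto the ambient cohomology and the degree-$1$ invariant on $X$ is not forced to vanish for trivial reasons, so the identity is both non-vacuous and exact.
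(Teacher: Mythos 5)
The paper itself does not prove this proposition; it is quoted from Benedetti--Perrin, with the accompanying Remark explaining that part (1) for $d=1$ is from \cite{LaMa} via \cite[Corollary 5.8]{BePe}, that part (1) for $d=2$ follows by the arguments of \cite[Theorem 5.11(1)]{BePe}, and that part (2) is a projection-formula argument as in \cite[Lemma 5.12]{BePe}. Your sketch of part (1) is in the right spirit (though hand-wavy on the irreducibility step), and citing \cite[Proposition 5.13]{BePe} is indeed the route the paper takes.

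Your sketch of part (2), however, has a genuine gap. The quantum Lefschetz comparison $\iota_*[\overline{\mathcal{M}}_{0,n}(Y,1)] = e(E)\cap[\overline{\mathcal{M}}_{0,n}(X,1)]$ applies to insertions that are pullbacks $j^*\gamma_i$ of classes from $X$; applying the projection formula along $\iota$ to those produces the integral of $e(E)\cup\prod({\rm ev}_i^X)^*\gamma_i$ on the $X$-side, with no $j_*$ appearing. In your chain you simultaneously keep $e(E)$ (degree $4$, since $E$ has rank $2$) and replace $\alpha,\alpha'$ by $j_*\alpha,j_*\alpha'$ (each of degree $\deg\alpha+2$), so your right-hand integrand lives in degree $\deg\alpha+\deg\alpha'+2i+8$ on $\overline{\mathcal{M}}_{0,n}(X,1)$, while the left-hand side lives in degree $\deg\alpha+\deg\alpha'+2i$ on a space of dimension $\dim X+r_X-2$. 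Matching both to top degree gives the contradictory conditions $\deg\alpha+\deg\alpha'+2i=2(\dim X+r_X)-4$ and $=2(\dim X+r_X)-8$; so your intermediate equality is off by four cohomological degrees, and the subsequent ``collapse'' that discards $e(E)$ is not justified. The target identity $\langle \alpha,\alpha',j^*\gamma\rangle_1^Y = \langle j_*\alpha,j_*\alpha',\gamma\rangle_1^X$ has $j_*$ on the $X$-side but no Euler-class factor — this is the signal that the mechanism is not quantum Lefschetz with an obstruction twist.

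The correct ``canonical projection formula'' argument the paper alludes to is different: one uses the commuting square built from $\iota:\overline{\mathcal{M}}_{0,3}(Y,1)\to\overline{\mathcal{M}}_{0,3}(X,1)$ and the evaluation maps ${\rm ev}_{12}:\overline{\mathcal{M}}_{0,3}(Y,1)\to Y^2$, ${\rm ev}^X_{12}:\overline{\mathcal{M}}_{0,3}(X,1)\to X^2$ over $j\times j: Y^2\to X^2$. Since $({\rm ev}_3^Y)^*j^*\gamma = \iota^*({\rm ev}_3^X)^*\gamma$, the projection formula for $\iota$ reduces the claim to $\iota_*\bigl(({\rm ev}_1^Y)^*\alpha\cup({\rm ev}_2^Y)^*\alpha'\bigr) = ({\rm ev}_1^X)^*j_*\alpha\cup({\rm ev}_2^X)^*j_*\alpha'$, which is exactly dimension-consistent (both sides have degree $\deg\alpha+\deg\alpha'+4$). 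This follows from compatibility of Gysin maps once one knows $\overline{\mathcal{M}}_{0,3}(Y,1)$ has the expected dimension — part (1) — and the bound $i<n-1$ is what ensures the boundary strata of the fiber product $\overline{\mathcal{M}}_{0,3}(X,1)\times_{X^2}Y^2$ (stable maps with markings $1,2$ on a contracted bubble landing in $Y$, but line component not in $Y$) contribute nothing. Your plan gets the broad shape (part (1) feeds into part (2) via a push-pull identity) but the specific identity you wrote is not the right one and does not reduce to the claimed result.
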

\begin{remark}
 Part (1) with $d=1$ is a consequence of the result in \cite{LaMa} as explained \cite[Corollary 5.8]{BePe}. Then part (2) is obtained by a canonical argument with projection formula as in \cite[Lemma 5.12]{BePe}.
  Part (1) with $d=2$  was proved  in \cite[Theorem 5.11 (1)]{BePe} for  adjoint or quasi-minuscule Grassmannians (excluding type $G_2)$, and  also holds in our situation  by exactly the same arguments therein.
\end{remark}

Extend the morphism  $j^*: H^*(X)\to H^*(Y)$  to  a $\mathbb{C}$-linear map $j^*: QH^*(X)\to QH^*(Y)$  by defining $j^* (q_X^m)=q_Y^m$, which is distinct from the conjectural lifting $j^*_q$.

\begin{prop}\label{prop: qPieriY_deg1} Let $\lambda\in \mathcal{P}_{k, n}$,  $1\leq p\leq k$ and $\beta\in H_{\rm prim}(Y)$.  In $QH^*(Y)$, we have
      \begin{align*}
          j^*\sigma_{1^p}\star j^*\sigma_\lambda &\equiv j^*(\sigma_{1^p}\cup \sigma_\lambda)+j^*(\sigma_{1^p}\star (\sigma_\lambda\cup \sigma_1)-\sigma_{1^p}\cup (\sigma_\lambda\cup \sigma_1))\quad \mod q_Y^2,\\
           j^*\sigma_{1^p}\star \beta & \equiv 0\quad \mod q_Y^2.      \end{align*}
  \end{prop}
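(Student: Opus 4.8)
The plan is to compute the degree-one and degree-two quantum contributions to $j^*\sigma_{1^p}\star j^*\sigma_\lambda$ in $QH^*(Y)$ using the comparison of Gromov--Witten invariants in Proposition \ref{prop: virfundamental} (2), reducing everything to already-known quantum invariants of $X$. First I would unwind the definition: for $\gamma \in H^*(X)$ with $j^*\gamma$ an ambient class, the coefficient of $j^*\gamma^\vee_{\rm amb}$ in $j^*\sigma_{1^p}\star j^*\sigma_\lambda$ is governed by three-point invariants $\langle j^*\sigma_{1^p}, j^*\sigma_\lambda, \bullet\rangle^Y_d$, and one must separately handle the primitive part of $H^*(Y)$. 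The classical ($d=0$) term is just $j^*(\sigma_{1^p}\cup\sigma_\lambda)$ by functoriality of cup product. For the $d=1$ term, the key point is that for a test class $\gamma \in H^{2i}(X)$ with $i < n-1$ one has $\langle j^*\sigma_{1^p}, j^*\sigma_\lambda, j^*\gamma\rangle^Y_1 = \langle j_*j^*\sigma_{1^p}, j_*j^*\sigma_\lambda, \gamma\rangle^X_1$, and since $j_*j^* = \cup\,[Y] = \cup\,(\sigma_1)$ on $H^*(X)$ (as $[Y] = h_X = \sigma_1$), this becomes $\langle \sigma_{1^p}\cup\sigma_1, \sigma_\lambda\cup\sigma_1, \gamma\rangle^X_1$. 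Using the divisor axiom to strip off one $\sigma_1$ (contributing a factor equal to $\int_1 \sigma_1 = 1$), this equals $\langle \sigma_{1^p}, \sigma_\lambda\cup\sigma_1,\gamma\rangle^X_1$, which is exactly the $q_X$-coefficient of $\sigma_{1^p}\star(\sigma_\lambda\cup\sigma_1)$ in $QH^*(X)$, paired against $\gamma$.

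Next I would assemble these pieces. The $d=1$ contribution to $j^*\sigma_{1^p}\star j^*\sigma_\lambda$, expanded against the ambient dual basis, therefore matches $j^*$ applied to the purely-quantum ($q_X^1$) part of $\sigma_{1^p}\star(\sigma_\lambda\cup\sigma_1)$; rewriting this quantum part as $\sigma_{1^p}\star(\sigma_\lambda\cup\sigma_1) - \sigma_{1^p}\cup(\sigma_\lambda\cup\sigma_1)$ modulo $q_X^2$ gives precisely the stated formula modulo $q_Y^2$, once one checks that no primitive classes appear as outputs of a $d=1$ product of two ambient classes in the relevant range of degrees — this is where the Hodge--Tate hypothesis (which by Theorem \ref{mainthm:HodgehyperplaneGr} restricts $(k,n)$ and keeps all cohomology in the ``middle-free'' range except in the single middle degree) and the dimension count $i<n-1$ in Proposition \ref{prop: virfundamental} are used. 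For the second identity, $j^*\sigma_{1^p}\star\beta$ with $\beta$ primitive, I would argue that the $d=0$ term vanishes because the cup product of an ambient class with a primitive class is primitive and $\sigma_{1^p}$ has low enough degree that $j^*\sigma_{1^p}\cup\beta$ lies in a degree where $H^*(Y)$ has no primitive part (again using Hodge--Tateness, so primitive classes live only in middle degree $\dim Y$), while the $d=1$ term is handled by the same pushforward trick: $\langle j^*\sigma_{1^p}, \beta, j^*\gamma\rangle^Y_1$ — but here one needs $\beta$ to pair suitably, and since $j_*\beta = 0$ for $\beta$ primitive (as $\beta$ is orthogonal to the ambient part, equivalently $j_*\beta \in H^*(X)$ pairs to zero against all of $j^*H^*(X) = $ everything in that degree), the invariant vanishes.

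The main obstacle I expect is the careful bookkeeping of \emph{which} output classes can be primitive, i.e. controlling the primitive part of the degree-one quantum product. The pushforward identity $\langle\alpha,\alpha',j^*\gamma\rangle^Y_1 = \langle j_*\alpha,j_*\alpha',\gamma\rangle^X_1$ only sees the \emph{ambient} part of the product on $Y$ (it tests against classes pulled back from $X$), so to conclude the product has no primitive component one genuinely needs the geometry: either invoke that for $d=1$ the moduli space $\overline{\mathcal M}_{0,3}(Y,1)$ maps to lines in $Y$ whose sweep is controlled, so that the relevant invariants involving a primitive insertion vanish for degree reasons in the Hodge--Tate cases, or use that the primitive cohomology of $Y$ is concentrated in the middle degree $\dim Y$ and a degree count on $\mathrm{expdim}\,\overline{\mathcal M}_{0,3}(Y,1) = \dim Y + r_Y$ forces any three-point invariant with a primitive insertion and two low-degree ambient insertions to vanish modulo $q_Y^2$. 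I would present this degree/dimension argument explicitly as the crux, then let the algebraic identities follow mechanically from the quantum Pieri rule for $X$ and the divisor axiom.
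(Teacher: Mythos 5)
Your overall strategy is the same as the paper's — reduce everything to Gromov--Witten invariants on $X$ via Proposition \ref{prop: virfundamental}(2), expand against a Schubert/primitive basis, and use $j_*\beta=0$ to kill primitive insertions. But there are two concrete problems in the way you invoke the pushforward formula.

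First, when you write $\langle j^*\sigma_{1^p},j^*\sigma_\lambda,j^*\gamma\rangle^Y_1=\langle j_*j^*\sigma_{1^p},j_*j^*\sigma_\lambda,\gamma\rangle^X_1$, you are placing the \emph{test class} in the $j^*\gamma$-slot of Proposition \ref{prop: virfundamental}(2), which requires $\gamma\in H^{2i}(X)$ with $i<n-1$. But $\gamma$ is Poincar\'e dual on $Y$ to the output class, so $i=\dim Y-m$ with $m=p+|\lambda|-(n-1)$, and the constraint $i<n-1$ unwinds to $p+|\lambda|\geq\dim X$, which fails for most $\lambda$. Second, and more fatally, the step from $\langle\sigma_{1^p}\cup\sigma_1,\sigma_\lambda\cup\sigma_1,\gamma\rangle^X_1$ to $\langle\sigma_{1^p},\sigma_\lambda\cup\sigma_1,\gamma\rangle^X_1$ by ``stripping $\sigma_1$ with the divisor axiom'' is not a valid operation: the divisor axiom removes a \emph{standalone} divisor insertion from an $m$-point invariant, not a divisor cupped inside another insertion. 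Indeed the degree count shows the two numbers cannot coincide — the first has total insertion degree $\dim X+r_X$ (the correct expected dimension for a three-point degree-one invariant on $X$), while the second has total degree $\dim X+r_X-1$ and so vanishes identically by the dimension axiom.

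The correct mechanics, as in the paper, is to use the symmetry of the invariant to put $j^*\sigma_{1^p}$ — whose degree $p\leq k<n-1$ always meets the constraint — in the $j^*\gamma$-slot, so that $j_*$ is applied to $j^*\sigma_\lambda$ and to the test class but \emph{not} to $j^*\sigma_{1^p}$. This gives $\langle\sigma_{1^p},\sigma_\lambda\cup\sigma_1,j_*((j^*\sigma_\mu)^\vee)\rangle^X_1$ with no spurious extra $\sigma_1$; a bookkeeping step (which you also elide) then identifies $j_*((j^*\sigma_\mu)^\vee)$ with $\sigma_\mu^\vee$ plus corrections coming from the kernel of $j^*$ in the middle degree, after which the sum matches the $q_X$-coefficient of $\sigma_{1^p}\star(\sigma_\lambda\cup\sigma_1)$. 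Your treatment of the primitive cases via $j_*\beta=0$ is correct in spirit, but should likewise be phrased with $\sigma_{1^p}$ in the degree-restricted slot so that the hypothesis of Proposition \ref{prop: virfundamental}(2) is actually met.
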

\begin{proof}
 Denote $m:=p+|\lambda|-(n-1)$.  Take an ordering  $\{\sigma_{\mu^{(1)}},\cdots, \sigma_{\mu^{(b)}}\}$ of the Schubert basis of $H^{2m}(X)$ such that   $H^{2m}(Y)$ has a basis   $\{j^*\sigma_{\mu^{(i)}}\}_{1\leq i\leq a}\bigcup\{\beta_i\}_{1\leq i\leq c}$
 for some $1\leq a\leq b$ and  primitive classes $\beta_i$'s which appear only if $\dim Y=2m$. Write
 $$j^*\sigma_{1^p}\star j^*\sigma_\lambda\equiv j^*\sigma_{1^p}\cup j^*\sigma_\lambda+ q_Y\sum_{i=1}^am_i j^*\sigma_{\mu^{(i)}}+q_Y\sum_{i=1}^c\tilde m_i \beta_i\mod q^2_Y.$$
Then for  any $1\leq i\leq c$, $\tilde m_i=\langle j^*\sigma_{1^p}, j^*\sigma_\lambda, \beta_i^\vee\rangle_1^Y= \langle \sigma_{1^p}, j_*(j^*\sigma_\lambda), j_*(\beta_i^\vee)\rangle_1^X=0$
by using   Proposition \ref{prop: virfundamental} and noting $j_*(\beta_i^\vee)=0$ (since $\beta_i^\vee$ is again a primitive class). For $1\leq i\leq a$,
$$m_i=\langle j^*\sigma_{1^p}, j^*\sigma_\lambda, (j^*\sigma_{\mu^{(i)}})^\vee\rangle_1^Y= \langle  \sigma_{1^p}, j_*(j^*\sigma_\lambda), j_*((j^*\sigma_{\mu^{(i)}})^\vee)\rangle_1^X.$$
 Since $Y$ is a hyperplane section of $X$, by projection formula we have
 $j_*(j^*\sigma_\lambda)=j_*(j^*\sigma_\lambda\cup 1)=\sigma_\lambda\cup j_*1=\sigma_\lambda\cup \sigma_1$.
 For any $\nu\in \mathcal{P}_{k, n}\setminus\{\mu^{(s)}\}_{a<s\leq b}$, we have
\begin{align*}\label{dualformula}
     \int_{[X]} j_*((j^*\sigma_{\mu^{(i)}})^\vee)\cup \sigma_\nu=\int_{[Y]} (j^*\sigma_{\mu^{(i)}})^\vee\cup j^*\sigma_\nu=\delta_{\mu^{(i)}, \nu}
 \end{align*}
For $a<s\leq b$, writing $j^*\sigma_{\mu^{(s)}}=\sum_{t=1}^a c_{st}j^*\sigma_{\mu^{(t)}}$,  we have
  $ \int_{[X]} j_*((j^*\sigma_{\mu^{(i)}})^\vee)\cup \sigma_{\mu^{(s)}}=\int_{[Y]} (j^*\sigma_{\mu^{(i)}})^\vee\cup (\sum_tc_{st}j^*\sigma_{\mu^{(t)}})= c_{si}$. It follows that
   $j_*((j^*\sigma_{\mu^{(i)}})^\vee)= \sigma_{\mu^{(i)}}^\vee+\sum_sc_{si}\sigma_{\mu^{(s)}}^\vee$.
   Hence,
   \begin{align*}
   q_Y\sum_{i=1}^am_i j^*\sigma_{\mu^{(i)}}
    &=q_Y\sum_{i=1}^{a} \big(\langle \sigma_{1^p}, \sigma_\lambda\cup \sigma_1, \sigma_{\mu^{(i)}}^\vee\rangle_1^X j^*\sigma_{{\mu}^{(i)}} + \sum_{s=a+1}^b\langle \sigma_{1^p}, \sigma_\lambda\cup \sigma_1, \sigma_{\mu^{(s)}}^\vee \rangle_1^Xc_{si} j^*\sigma_{{\mu}^{(i)}}\big)\\
     &=  q_Y\sum_{i=1}^{b} \big(\langle \sigma_{1^p}, \sigma_\lambda\cup \sigma_1, (\sigma_\mu^{(i)})^\vee\rangle_1^X j^*\sigma_{\mu}^{(i)}
     \\&=  j^*(\sigma_{1^p}\star (\sigma_\lambda\cup \sigma_1)-\sigma_{1^p}\cup (\sigma_\lambda\cup \sigma_1))).
        \end{align*}
  Here the last equality follows from the quantum Pieri rule for $X$, which shows that the quantum part of   $\sigma_{1^p}\star (\sigma_\lambda\cup \sigma_1)$ consists of the degree-one part of the quantum product.
\end{proof}

   The \textit{span} $Span(Z)$ of  a subvariety $Z\subset X$ is  the smallest vector subspace of $\mathbb{C}^n$ that contains all the $k$-dimensional spaces given by points of $Z$. It has     been used to study $QH^*(X)$ in \cite{Buc}.
 \begin{prop} \label{prop: deg2GW}
Let $k\geq 3$ and  $Z$  be a closed subvariety of $Y$ of dimension $m$. If  $n>2k+m$, then for any   $\alpha\in  H^*(Y)$, we have
   $$\langle P.D.[{\rm pt}], P.D.[Z],  \alpha\rangle_{2}^Y=0.$$
 \end{prop}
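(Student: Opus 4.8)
The plan is to deduce the vanishing from the classical bound on the linear span of a low-degree rational curve in a Grassmannian, after reducing the Gromov--Witten invariant to an intersection number on a moduli space. By Proposition~\ref{prop: virfundamental}(1), the space $\overline{\mathcal{M}}:=\overline{\mathcal{M}}_{0,3}(Y,2)$ is irreducible of the expected dimension $\dim Y+2(n-1)$; consequently its general point is a smooth point at which the obstruction space vanishes, so $[\overline{\mathcal{M}}]^{\rm vir}=[\overline{\mathcal{M}}]$ and the invariant is an honest intersection number. As $Y$ is covered by lines, ${\rm ev}_1:\overline{\mathcal{M}}\to Y$ is surjective, so for a general point $p=V_0\in Y$ the fibre $\mathcal{M}_p:={\rm ev}_1^{-1}(V_0)$ is smooth of dimension $2(n-1)$ and ${\rm ev}_1^*\big(P.D.[p]\big)=P.D.[\mathcal{M}_p]$ in $H^*(\overline{\mathcal{M}})$, whence
\[
\langle P.D.[{\rm pt}],\,P.D.[Z],\,\alpha\rangle_2^Y
=\int_{\mathcal{M}_p}\big({\rm ev}_2|_{\mathcal{M}_p}\big)^*P.D.[Z]\;\cup\;\big({\rm ev}_3|_{\mathcal{M}_p}\big)^*\alpha .
\]

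Next I would show that ${\rm ev}_2$ maps $\mathcal{M}_p$ into the Schubert variety $S:=\{V\in Gr(k,n):\dim(V\cap V_0)\ge k-2\}$. A point of $\mathcal{M}_p$ is a degree-$2$, genus-$0$ stable map $f$ with $f(x_1)=V_0$; its image $C$ is a connected curve of degree at most $2$, so by \cite{Buch} the span $Span(C)\subset\mathbb{C}^n$ has dimension at most $k+2$, while it contains both the $k$-plane $V_0=f(x_1)$ and the $k$-plane $f(x_2)$. Hence $\dim\big(V_0+f(x_2)\big)\le k+2$, i.e.\ $f(x_2)\in S$. (For a reducible image curve the span bound follows by induction, since two components meeting at a point $V'$ have spans both containing the $k$-plane $V'$.)

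The conclusion is now a dimension count. The Schubert variety $S$ has codimension $(k-2)(n-k-2)$ in $Gr(k,n)$, so $T:=S\cap Y$ has complex dimension at most $\dim S=k(n-k)-(k-2)(n-k-2)=2n-4$, and ${\rm ev}_2|_{\mathcal{M}_p}$ factors through the closed subset $T\subset Y$. Since $k\ge 3$ and $n>2k+m$ give $k(n-k)\ge 3(n-3)$ and $m\le n-7$, we obtain
\[
\dim Y-m=k(n-k)-1-m\ \ge\ 3(n-3)-1-(n-7)\ =\ 2n-3\ >\ 2n-4\ \ge\ \dim_{\mathbb{C}}T,
\]
so $H^{2(\dim Y-m)}(T)=0$. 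Therefore $\big({\rm ev}_2|_{\mathcal{M}_p}\big)^*P.D.[Z]$ is the pullback of $P.D.[Z]|_T=0$, and the integral above vanishes for every $\alpha$.

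Almost every step here is routine: the codimension of $S$ is the standard degeneracy-locus computation, the numerical inequality reduces to $k(n-k)-m\ge 3(n-3)-(n-7)=2n-2$, and the span estimate is quoted from \cite{Buch}. The one ingredient I would not reprove is Proposition~\ref{prop: virfundamental}(1), on which the opening reduction (the equalities $[\overline{\mathcal{M}}]^{\rm vir}=[\overline{\mathcal{M}}]$ and ${\rm ev}_1^*(P.D.[p])=P.D.[\mathcal{M}_p]$) rests; and the only point needing a little care is that the displayed inequality remains strict in the extremal case $k=3$, $m=n-7$, which it does since $2n-3>2n-4$.
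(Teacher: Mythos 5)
Your proof is correct and reaches the same conclusion, but the mechanism differs from the paper's in an interesting way. Both arguments rest on the same two ingredients — the irreducibility of $\overline{\mathcal{M}}_{0,3}(Y,2)$ in expected dimension (Proposition~\ref{prop: virfundamental}(1)) and Buch's bound $\dim Span(C)\le k+2$ for a connected degree-$2$ curve — but you deploy them cohomologically on the moduli space, whereas the paper argues by contradiction in the target. Concretely: you cut down to the fibre $\mathcal{M}_p=\mathrm{ev}_1^{-1}(p)$, note that $\mathrm{ev}_2(\mathcal{M}_p)$ lands in the Schubert variety $S=\{V:\dim(V\cap V_0)\ge k-2\}$ of dimension $2n-4$, and kill the pulled-back class $P.D.[Z]$ because its cohomological degree $2(\dim Y-m)$ exceeds $2\dim_{\mathbb C}(S\cap Y)$. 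The paper instead fixes a hypothetical conic through a generic $P$, a point $\hat P\in Z$ and a point of a generic $Z'$, observes this forces $Span(P)\cap Span(\hat P)\ne 0$, and then shows via the explicit incidence variety $B_Z=\{(V_1,V_k,\bar V_k):V_1\le V_k\in Z,\;V_1\le\bar V_k\}$ that the locus $\pi_3(B_Z)\subset Y$ of points meeting some $k$-plane in $Z$ has codimension $\ge n-2k-m>0$, so a general $P$ avoids it — a contradiction. Your route replaces the incidence-variety count by a codimension computation for a single Schubert cell, and trades the geometric ``no such conic exists'' statement for a vanishing of a restricted cohomology class; the paper's route is tighter numerically (it uses only $\dim(V_0\cap V)\ge 1$, i.e. the full $n-2k-m>0$ hypothesis, and does not require $Y$ to be covered by conics), while yours is arguably cleaner because it never leaves the moduli space. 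Two small cleanups you should make if you write this up: (i) irreducibility of expected dimension gives $[\overline{\mathcal{M}}]^{\rm vir}=c\,[\overline{\mathcal{M}}]$ for some $c>0$, not automatically generic smoothness of $\mathcal{M}_p$; for the vanishing you need only the support statement, not smoothness or Poincar\'e duality on $\overline{\mathcal{M}}$, so phrase the reduction that way; (ii) the cited reference should be \cite{Buc}, not an undefined key.
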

 \begin{proof}
     It suffices to show the case when $\alpha$ is represented by    a closed subvariety of dimension   $\tilde m = 2\dim Y-m-2(n-1)$. Assume
      $\langle [{\rm pt}], [Z],  \alpha\rangle_{d}^Y\neq 0$, then
      for any point $P\in Y$ and any closed subvariety $Z'$ of dimension $\tilde m$, there exists a conic $C$ passing through $P$, $Z$ and $Z'$ by Proposition   \ref{prop: virfundamental} (1). Say $\hat P \in C\cap Z$, then we have $\dim Span(P)=\dim(\hat P)=k$. Hence,
       $Span(P)\cap Span(\hat P)\neq 0$, by noting that they are both vector subspaces of $Span(C)$ while $\dim Span(C)\leq k+2$ by \cite[Lemma 1]{Buc}.

       Consider the configuration space
        $$B_Z:=\{(V_1, V_k, \bar V_k)\mid V_1\leq \bar V_k, \,\, V_1\leq V_k\in Z, \dim V_1=1, \dim V_k=\dim \bar V_k=k\}  $$
     as well as the natural projections $\pi_i$ by sending $(V_1, V_k, \bar V_k)$ to the $i$th vector space. Note that $B_Z$ is a closed variety, which is a fibration over $Z$ via $\pi_2$ with fiber at $V_k$ being a $Gr(k-1, n-1)$-bundle over $\mathbb{P}(V_k)$). Thus $\dim B_Z=\dim Z+ (k-1)+(k-1)(n-k)$. Hence,
      $$\mbox{codim}_X\pi_3(B_Z)=\dim X-\pi_3(B_Z)\geq \dim X-\dim B_Z=
      n-2k-m+1.$$
     Since $Y$ is codimension $1$ in $X$,
     $\mbox{codim}_Y(Y\cap \pi_3(B_Z)) \geq n-2k-m>0$ by the hypothesis. Therefore, there exists $\bar V_k\in Y\setminus \pi_3(B_Z)$. Then for any $V_k\in Z$ and any 1-dimensional vector subspace $V_1$, either $V_1\not\leq \bar V_k$ or $V_1\not\leq  V_k$ holds. That is, $\bar V_k\cap V_k=0$, saying that the span of the point $\bar V_k$ in $Y$ and the span of the point $V_k\in Z\subset Y$ has zero intersection and resulting in a contradiction.
 \end{proof}

\begin{defn}
    We call $Y$ a Hodge--Tate hyperplane section, if $H^*(Y)$ is of Hodge--Tate type. 
\end{defn}

  \begin{prop}[Quantum Pieri rule for $Y$]\label{prop: quanPieriY} Let $Y$ be a Hodge--Tate hyperplane section. Take any $1\leq p\leq k$,  $\lambda\in \mathcal{P}_{k, n}$ and $\beta\in H_{\rm prim}(Y)$.  In $QH^*(Y)$, we have
      $$j^*\sigma_{1^p}\star j^*\sigma_\lambda= j^*(\sigma_{1^p}\cup \sigma_\lambda)+j^*(\sigma_{1^p}\star (\sigma_\lambda\cup \sigma_1)-\sigma_{1^p}\cup (\sigma_\lambda\cup \sigma_1));\qquad j^*\sigma_{1^p}\star \beta=0. $$
  \end{prop}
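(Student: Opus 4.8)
The plan is to upgrade Proposition \ref{prop: qPieriY_deg1}, which already gives both identities modulo $q_Y^2$, to exact equalities. Two things need to be checked: that no power $q_Y^d$ with $d\ge 3$ can occur, and that the potentially surviving $q_Y^2$-term is in fact zero.

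\textbf{Step 1 (degree bookkeeping).} By Theorem \ref{mainthm:HodgehyperplaneGr} we are in one of the cases $k=1$; $k=2$; or $k=3$ with $n\in\{6,7,8\}$. For $k=1$ we have $Y\cong\mathbb P^{n-2}$ and the identity is an elementary consequence of the standard presentation of $QH^*(\mathbb P^{n-2})$. Assume now $k\ge 2$, so $r_Y=n-1$ and $\deg q_Y=n-1$. Since $j^*\sigma_\mu\ne 0$ forces $|\mu|\le\dim Y=k(n-k)-1$, the product $j^*\sigma_{1^p}\star j^*\sigma_\lambda$ has classical degree $p+|\lambda|\le k+k(n-k)-1$, which is $<2(n-1)$ when $k=2$ and $<3(n-1)$ when $k=3$; likewise, when $k=3$ and $n$ is even, $j^*\sigma_{1^p}\star\beta$ with $\beta$ in the primitive (hence middle-degree) part of $H^*(Y)$ has classical degree $p+\tfrac12(3n-10)<2(n-1)$ (and $H_{\rm prim}(Y)=0$ otherwise, as $\dim Y$ is then odd). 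Hence for $k=2$ and for the $\beta$-identities there is nothing beyond Proposition \ref{prop: qPieriY_deg1} to prove, and for $k=3$ it remains only to show that the $q_Y^2$-coefficient of $j^*\sigma_{1^p}\star j^*\sigma_\lambda$ vanishes for $1\le p\le 3$, $n\in\{6,7,8\}$, $\lambda\in\mathcal P_{3,n}$.

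\textbf{Step 2 (reduction to a Gromov--Witten vanishing).} This coefficient equals $\sum_\delta\langle j^*\sigma_{1^p},j^*\sigma_\lambda,\delta^\vee\rangle_2^Y\,\delta$, so it suffices to prove $\langle j^*\sigma_{1^p},j^*\sigma_\lambda,\gamma\rangle_2^Y=0$ for every $\gamma\in H^*(Y)$. By the dimension axiom this invariant vanishes unless $p+|\lambda|+\mathrm{codim}\,\gamma=\dim Y+2r_Y$, and a short check of the inequalities $|\lambda|\le\dim Y$, $p\le 3$ shows that under this constraint $\mathrm{codim}\,\gamma$ can never equal $\tfrac12\dim Y$, so $\gamma$ lies in the ambient part and, by linearity, we may take $\gamma=j^*\sigma_\mu$ for a single $\mu\in\mathcal P_{3,n}$. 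Now $j^*\sigma_{1^p}$, $j^*\sigma_\lambda$, $j^*\sigma_\mu$ are each represented by the intersection with $Y$ of a Schubert variety of $X=Gr(3,n)$. Since $GL_n$ acts transitively on $X$ and the evaluation maps on $\overline{\mathcal M}_{0,3}(Y,2)$ — an irreducible variety of its expected dimension $\dim Y+2r_Y$ by Proposition \ref{prop: virfundamental}(1), so its virtual class is the fundamental class — land in $Y\subset X$, Kleiman's transversality theorem (applied to the factorwise $GL_n^3$-action on $X^3$) shows this invariant equals the number of degree-$2$ curves in $Y$ through generic $GL_n$-translates $Z_1,Z_2,Z_3$ of the three cycles; I would prove there are none.

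\textbf{Step 3 (span/kernel argument).} Let $C\subseteq Y$ be such a curve with $V_i\in C\cap Z_i$. By \cite[Lemma 1]{Buc} (and its transpose under $Gr(3,n)\cong Gr(n-3,n)$), $\dim Span(C)\le k+2=5$ and $\dim Ker(C)\ge k-2=1$, where $Ker(C)=\bigcap_{x\in C}V_x$; fix a line $L\subseteq Ker(C)$, so $L\subseteq V_1\cap V_2\cap V_3$. If $p\le 2$, then $\dim Z_2+\dim Z_3=2\dim Y-(|\lambda|+|\mu|)=n-8+p\le n-2k$, while $L\subseteq V_2\cap V_3$ places $(V_2,V_3)$ in the incidence locus $\{(V,V')\in X^2:\ V\cap V'\ne 0\}$, of codimension $n-2k+1$; by Kleiman for $GL_n\times GL_n$ on $X^2$, generic translates $Z_2,Z_3$ miss this locus, a contradiction. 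If $p=k=3$, then $\sigma_{1^k}$ is the class of the sub-Grassmannian $\{V:\ V\subseteq E_{n-1}\}$, so $Z_1=\{V\subseteq H\}\cap Y$ for a generic hyperplane $H$; hence $L\subseteq V_1\subseteq H$, and $L$ also lies in a member of $Z_2$ and of $Z_3$. Writing, for $Z\subseteq X$, $\mathcal L(Z)=\{L'\in\mathbb P^{n-1}:\ L'\subseteq V\ \text{for some}\ V\in Z\}$ — a $GL_n$-translate of a fixed subvariety of $\mathbb P^{n-1}$ of dimension $\le\dim Z+(k-1)$, with $\mathcal L(Z_1)=\mathbb P(H)$ a hyperplane — we get $L\in\mathbb P(H)\cap\mathcal L(Z_2)\cap\mathcal L(Z_3)$, an intersection of generic translates whose expected dimension is at most $(n-2)+(\dim Z_2+2)+(\dim Z_3+2)-2(n-1)=\dim Z_2+\dim Z_3-(n-4)=-1$, so it is empty, again a contradiction. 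In either case no such $C$ exists, the invariant vanishes, and the proposition follows.

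\textbf{The main obstacle} is the bookkeeping of Step 3 in the case $p=k$: unlike $p<k$, the two movable Schubert cycles $Z_2,Z_3$ are not individually small enough to be separated by a single incidence condition, and one must exploit the precise sub-Grassmannian geometry of $\sigma_{1^k}$ together with the sharper \emph{kernel} estimate $\dim Ker(C)\ge k-2$ (dual to the \emph{span} estimate used in Proposition \ref{prop: deg2GW}) to force the common line $L=Ker(C)$ into a negative-dimensional family. For $p<k$ the argument is a direct strengthening of the reasoning behind Proposition \ref{prop: deg2GW}, needing only that a degree-$2$ curve through two points of $Gr(k,n)$ with $k\ge 3$ makes those points meet.
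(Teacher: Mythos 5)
Your overall template matches the paper's: use Proposition~\ref{prop: qPieriY_deg1} to reduce to showing no $q_Y^2$-term survives, dispose of $k\in\{1,2\}$ and the $\beta$-identity by degree bookkeeping, and reduce the remaining cases to vanishing of a genus-$0$, degree-$2$ invariant $\langle j^*\sigma_{1^p},j^*\sigma_\lambda,j^*\sigma_\mu\rangle_2^Y$. But where the paper then only attacks $p=1$ directly (via Proposition~\ref{prop: deg2GW}, exploiting that the degree constraint forces one insertion to be a point class) and gets $p=2,3$ for free from associativity, you attempt a direct geometric argument for all $p$. That part has a genuine gap.

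The gap is in the Kleiman step. You treat $Z_2\times Z_3$ (resp.\ $\mathcal L(Z_2),\mathcal L(Z_3)$ for $p=3$) as generic $GL_n$-translates and compare $\dim(Z_2\times Z_3)=n-8+p$ against $\operatorname{codim}_{X^2}W=n-5$. But $Z_i=Y\cap g_iX_{\nu_i}$ is \emph{not} a generic translate of any fixed variety: $Y$ is fixed, and $g_i^{-1}Y$ varies with $g_i$. The correct Kleiman statement compares the translated $g_2X_\lambda\times g_3X_\mu$ against the \emph{fixed} variety $W\cap Y^2$, and forces emptiness only when $\dim(W\cap Y^2)<|\lambda|+|\mu|$. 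This inequality is exactly as strong as $\dim(W\cap Y^2)\leq\dim W-2$ for $p=2$, because the quantities are tight: with $k=3$ one has $\dim W=5n-13$ and $|\lambda|+|\mu|=5n-12-p$, so $\dim W-(|\lambda|+|\mu|)=p-1\geq 0$ for all $p\geq 1$, and without a Bertini-type argument showing that intersecting $W$ with the two divisors $Y\times X$ and $X\times Y$ drops the dimension by the full $2$ (in every component, not just generically), the count does not close. You have exactly the same issue in the $p=3$ case: $\mathcal L(Y\cap g_iX_{\nu_i})=g_i\,\mathcal L(g_i^{-1}Y\cap X_{\nu_i})$ is \emph{not} a $g_i$-translate of a fixed subvariety of $\mathbb P^{n-1}$ since $g_i^{-1}Y$ changes with $g_i$, so the ``three generic translates have negative expected intersection'' count is not licensed by Kleiman as stated. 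The paper's Proposition~\ref{prop: deg2GW} deliberately sidesteps this: the configuration space $B_Z$ is built inside $X$, the projection $\pi_3(B_Z)$ is a subvariety of $X$ whose codimension is computed without reference to $Y$, and the only use of $Y$ is the soft estimate $\operatorname{codim}_Y(Y\cap\pi_3(B_Z))\geq\operatorname{codim}_X\pi_3(B_Z)-1$; plus the argument places the genericity burden entirely on \emph{one} of the three insertions (a point) rather than on two cycles meeting an incidence locus. Combined with the associativity trick for $p\geq 2$, the paper never has to control $\dim(W\cap Y^2)$ at all.

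I do think your argument is salvageable: one can prove $\dim(W\cap Y^2)=\dim W-2$ by Bertini irreducibility (after checking $W$ is irreducible and dominates $X$ under both projections and that $W\cap(Y\times X)$ still dominates $X$ under the second), and for $p=3$ one should replace $\mathcal L(Z_i)$ by the fixed incidence variety $\pi(\mathcal J)\subset X^2$ with $\mathcal J=\{(L,V_2,V_3): L\subseteq V_2\cap V_3,\ V_2,V_3\in Y,\ L\subseteq H\}$ and then move only the Schubert pair. But as written, the dimension-count step you label ``by Kleiman'' does not follow from Kleiman, and the missing Bertini/irreducibility input is not minor bookkeeping — it is precisely the codimension~$2$ your inequality needs.
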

\begin{proof}
 Denote $q:=q_Y$. For $k \in \{1, 2\}$, we have $H_{\rm prim}(Y)=0$ and $p+|\lambda|\leq k+\dim Y=k+k(n-k)-1<2(n-1)=2\deg q$. Thus there   are no $q^d$-terms with $d\geq 2$ in   $j^*\sigma_{1^p}\star j^*\sigma_\lambda$.
 By Theorem \ref{mainthm:HodgehyperplaneGr}, it remains to consider the case when $k=3$ and $n\in \{6, 7, 8\}$.

 By degree counting again, there are no  $q^d$-terms with $d\geq 3$ (resp. $d\geq 2$) in   $j^*\sigma_{1^p}\star j^*\sigma_\lambda$ (resp. $j^*\sigma_{1^p}\star \beta$. Thus
by Proposition \ref{prop: qPieriY_deg1}, we have $j^*\sigma_{1^p}\star \beta=0$.

 For $p=1$, we further conclude that there are no  $q^2$-terms in   $j^*\sigma_{1}\star j^*\sigma_\lambda$. Indeed, when $n=6$, this holds by degree counting.
 When $n=7$,  a $q^2$-term occurs only if $12=2\deg q\leq 1+|\lambda|\leq 1+\dim Y=12$, implying $\lambda=(4,4,3)$.
 The coefficient of $q^2 \cdot 1$ in $j^*\sigma_1 \star j^*\sigma_{(4,4,3)}$ equals
 $\langle j^*\sigma_1, P.D.[{\rm pt}], P.D.[{\rm pt}]\rangle_2^Y$, and hence equals 0 by Proposition \ref{prop: deg2GW}.
When $n=8$, there are at most two possibilities by degree counting, being  the  coefficient of $q^2 \cdot 1$ (resp. $q^2j^*\sigma_1$) in $j^*\sigma_1 \star j^*\sigma_{(5,5,3)}$ (resp. $j^*\sigma_1 \star j^*\sigma_{(5,5,4)}$). They both equal
 $\langle j^*\sigma_1, P.D.[{\rm pt}], P.D.[\mathbb{P}^1]\rangle_2^Y$, and hence equal 0 by Proposition \ref{prop: deg2GW} again.
Hence, the first equality in the statement holds for $p=1$.

 For $p\in \{2, 3\}$,  it follows from  the associativity of quantum products that  $q^2$-terms do not occur.
 For instance for $p=2$ and $n=7$, $q^2$-terms occur at most in $j^*\sigma_{(1,1,0)}\star j^*\sigma_{(4,4,2)}$ and $j^*\sigma_{(1,1,0)}\star j^*\sigma_{(4,4,3)}$, by degree counting. By direct calculations, we have
 \begin{align*}
     j^*\sigma_1\star j^*\sigma_{(4,4,1)}&=j^*\sigma_{(4,4,2)}+qj^*\sigma_{(3,1,0)},\\
  j^*\sigma_{(1,1,0)}\star j^*\sigma_{(4,4,1)}\star j^*\sigma_1   &=q(j^*\sigma_{(3,2,0)}+j^*\sigma_{(4,1,0)})\star j^*\sigma_1=q(j^*\sigma_{(3,2,0)}+j^*\sigma_{(4,1,0)})\cup j^*\sigma_1+q\cdot q,\\
 qj^*\sigma_{(3,1,0)}\star j^*\sigma_{(1,1,0)} &=qj^*\sigma_{(3,1,0)}\cup j^*\sigma_{(1,1,0)}+q\cdot q.
 \end{align*}
Hence, $j^*\sigma_{(1,1,0)}\star j^*\sigma_{(4,4,2)}=q(j^*\sigma_{(3,2,0)}+j^*\sigma_{(4,1,0)})\cup j^*\sigma_1-qj^*\sigma_{(3,1,0)}\cup j^*\sigma_{(1,1,0)}$, which does not contain $q^2$-term.  The arguments for the remaining a few cases are similar.
\end{proof}

\subsubsection{Applications}

Denote by $p_{\alpha, V}(x)$ the characteristic polynomial  of the induced operator $\hat \alpha$ on an algebra $V$ containing $\alpha$. Using  the quantum Pieri rules, we have the following.
 \begin{lemma}
 \label{lemma: GrYchp}
 Let  $Y$ be a smooth hyperplane section of $X=Gr(3, n)$, where $n\in \{7, 8\}$.
 \begin{enumerate}
     \item For $n=7$,    $p_{\sigma_1^7, \mathcal{A}^0(X)}(x)=p_{(j^*\sigma_1)^6, \mathcal{A}^0(Y)}(x)=(128 - 13  x + x^2) (1 - 57   x - 289  x^2 + x^3).$
     \item For $n=8$, we    have   $p_{\sigma_1^8, \mathcal{A}^0(X)}(x)=(1 - x)^3 (1 - 1154   x + x^2) (6561   - 34   x + x^2)$
     and $ p_{\sigma_1^6\star \sigma_{1^2}, \mathcal{A}^0(X)}(x)
        = (1 - x) (1 + 478  x - x^2) (1 + x^2) (2187  + 6  x + x^2).$
     Moreover,
     $$ p_{(j^*\sigma_1)^7, \mathcal{A}^0(Y)}(x)=x^2 p_{\sigma_1^8, \mathcal{A}^0(X)}(x),\qquad p_{(j^*\sigma_1)^5\star j^*\sigma_{1^2}, \mathcal{A}^0(Y)}(x)=x^2 p_{\sigma_1^6\star \sigma_{1^2}, \mathcal{A}^0(X)}(x)$$
  \end{enumerate}
\end{lemma}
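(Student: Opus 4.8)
The plan is a finite computation on the ambient side $X=Gr(3,n)$, $n\in\{7,8\}$, fed into the quantum Lefschetz comparison of Proposition~\ref{prop: QLP}(2) to reach the hyperplane side. Since $2r_X=2n>\dim X$, the space $\mathcal A^0(X)=H^0(X)\oplus H^{2n}(X)$ has basis $\{1\}\cup\{\sigma_\lambda:|\lambda|=n\}$ and dimension $1+b_{2n}(X)$, i.e.\ $5$ for $n=7$ and $7$ for $n=8$. On the hyperplane side $r_Y=n-1$; when $n=7$ one has $H_{\mathrm{prim}}(Y)=0$ (as $\dim Y$ is odd and $Y$ is Hodge--Tate) and $\dim\mathcal A^{\bar i}(Y)=\dim H^*(Y)/r_Y=5$ for every $\bar i\in\mathbb Z_{r_Y}$, whereas when $n=8$ one has $\dim\mathcal A^0(Y)=9$ and $\dim\mathcal A^{\bar i}(Y)=7$ for $\bar i\neq0$, the difference accounted for by the one-dimensional primitive middle cohomology $H_{\mathrm{prim}}(Y)\subset H^{2r_Y}(Y)\subset\mathcal A^0(Y)$ and by the top class $[\mathrm{pt}_Y]=j^*\sigma_{(5,5,4)}\in H^{2\dim Y}(Y)\subset\mathcal A^0(Y)$.

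On the ambient side I would read the matrices of $\widehat{\sigma_1}^{r_X}$ — and, for $n=8$, of $\widehat{\sigma_1}^{r_X-2}\star\widehat{\sigma_{1^2}}$ — on $\mathcal A^0(X)$ directly from the presentation~\eqref{QHGr} and Bertram's quantum Pieri rule, and compute their characteristic polynomials; checking the displayed factorizations is then a bounded symbolic calculation. An arithmetically cleaner variant uses the classical description of the spectrum of $\widehat{\sigma_1}$ on $QH^\bullet(Gr(3,n))$ as $\{\zeta^a+\zeta^b+\zeta^c:\{a,b,c\}\subset\mathbb Z/n\}$, $\zeta$ a primitive $n$-th root of unity: every cyclic orbit of such $3$-subsets has full length $n$ (for $n=7$ by primality, for $n=8$ since no $3$-subset of $\mathbb Z/8$ is invariant under a nontrivial shift) and no orbit sum vanishes, so the eigenvalues of $\widehat{\sigma_1}^{r_X}$ on $\mathcal A^0(X)$ are exactly the $n$-th powers of the orbit sums, one per orbit; grouping orbits under complex conjugation produces the displayed polynomials, the factor $(1-x)^3$ for $n=8$ being the contribution of the three orbits whose sums are $8$-th roots of unity. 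The mixed operator is handled identically, with $e_2(\zeta^a,\zeta^b,\zeta^c)$ replacing one factor of the orbit sum.

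On the hyperplane side, since $r_X-r_Y=1$, Proposition~\ref{prop: QLP}(2) identifies the set of nonzero eigenvalues of $\widehat{h_Y}^{r_Y}$ with that of $\widehat{\sigma_1}^{r_X}$ \emph{without} any dilation. For $n=7$ this already closes the argument: the $\mathbb Z_{r_Y}$-grading forces each nonzero generalized eigenspace of $\widehat{h_Y}^{r_Y}$ to meet every $\mathcal A^{\bar i}(Y)$ in the same dimension, so on the $5$-dimensional $\mathcal A^0(Y)$ the five pairwise distinct roots of $p_{\sigma_1^7,\mathcal A^0(X)}$ must each occur and fill the space, giving $p_{(j^*\sigma_1)^6,\mathcal A^0(Y)}=p_{\sigma_1^7,\mathcal A^0(X)}$ (and, in passing, $Rad(Y)\cap\mathcal A^0(Y)=0$). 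For $n=8$ the same reasoning shows the nonzero eigenvalues of $\widehat{j^*\sigma_1}^{r_Y}$ on $\mathcal A^0(Y)$ form the same \emph{set} as on $\mathcal A^0(X)$, but now it no longer fixes their multiplicities; to finish I would iterate Proposition~\ref{prop: quanPieriY} $r_Y=7$ times on $\mathcal A^0(Y)$ — using that $\widehat{j^*\sigma_1}$ and $\widehat{j^*\sigma_{1^2}}$ preserve the ambient subspace $j^*H^*(X)$, annihilate $H_{\mathrm{prim}}(Y)$, and elsewhere are expressed through the cup product and the degree-one part of $QH^*(X)$ already used above — to confirm that the nonzero eigenvalues occur with exactly the multiplicities of $p_{\sigma_1^8,\mathcal A^0(X)}$ (in particular $x=1$ with multiplicity $3$) while $\widehat{j^*\sigma_1}^{r_Y}$ is nilpotent on a complementary $2$-dimensional subspace of $\mathcal A^0(Y)$ containing $H_{\mathrm{prim}}(Y)$; this yields $p_{(j^*\sigma_1)^7,\mathcal A^0(Y)}=x^2\,p_{\sigma_1^8,\mathcal A^0(X)}$, and the mixed operator $\widehat{j^*\sigma_1}^{r_Y-2}\star\widehat{j^*\sigma_{1^2}}$ is treated the same way.

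The hard part is precisely this multiplicity bookkeeping for $n=8$: because Proposition~\ref{prop: QLP}(2) controls only the set of nonzero eigenvalues, establishing that the nilpotent factor is exactly $x^2$ — equivalently that $x=1$ retains multiplicity $3$ and the four remaining nonzero eigenvalues stay simple on $\mathcal A^0(Y)$ — genuinely requires carrying out the $7$-fold iteration of the $Y$-quantum Pieri rule on the nine-dimensional space $\mathcal A^0(Y)$, tracking at every step which intermediate classes are ambient and which primitive. The other ingredients — the explicit factorizations on $X$, and the checks that the displayed factors are squarefree and, where needed, pairwise coprime — are routine but somewhat lengthy.
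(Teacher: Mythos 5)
Your plan is mathematically sound, but the step that does the real work on the $Y$-side --- invoking Proposition~\ref{prop: QLP}(2) to transfer the spectrum from $\mathcal A^0(X)$ to $\mathcal A^0(Y)$ --- is exactly what the paper is trying to \emph{verify} in these two cases. The paper proves Lemma~\ref{lemma: GrYchp} by an unconditional computation and then uses it, via Proposition~\ref{prop:conjpart2true}, to establish Theorem~\ref{thm: conjtrueforGr}, which is announced (see the Remark after Proposition~\ref{prop: QLP}) as a \emph{direct verification} of Proposition~\ref{prop: QLP} for $Y\subset A_6/P_3$ and $Y\subset A_7/P_3$. Since \cite{GaIr25} is still in preparation, the authors want this chain to be self-contained; feeding Proposition~\ref{prop: QLP}(2) back into the proof of the Lemma would undo that. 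This is a logical, not a mathematical, defect, but in context it is the defect that matters.

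Even setting circularity aside, the shortcut only pays off for $n=7$. There, as you observe, the $\mathbb Z_{r_Y}$-grading forces each nonzero generalized eigenvalue of $\widehat{h_Y}^{\,r_Y}$ to contribute equally to every $\mathcal A^{\bar i}(Y)$, and the count $5=\dim\mathcal A^0(Y)$ together with the five distinct nonzero roots of $p_{\sigma_1^7,\mathcal A^0(X)}$ pins the characteristic polynomial down exactly. But for $n=8$ Proposition~\ref{prop: QLP}(2) controls only the \emph{set} of nonzero eigenvalues, and, as you concede, determining the multiplicities (in particular that $x=1$ keeps multiplicity $3$ and that the nilpotent piece is exactly $x^2$) still forces you to iterate Proposition~\ref{prop: quanPieriY} on the nine-dimensional $\mathcal A^0(Y)$. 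At that point you have reproduced the paper's computation and Proposition~\ref{prop: QLP}(2) was never needed. The paper's actual proof is precisely this direct route on both sides: write down the explicit Schubert bases of $\mathcal A^0(X)$ (dimension $5$, resp.\ $7$) and $\mathcal A^0(Y)$ (dimension $5$, resp.\ $9$, recording the single relation for $n=7$ and the primitive class $\beta$ with $j^*\sigma_1\star\beta=0$ for $n=8$), compute the matrices of the relevant operators from the quantum Pieri rule on $X$ and from Proposition~\ref{prop: quanPieriY} on $Y$, and take characteristic polynomials. Your observation about reading the $X$-side spectrum from $\{\zeta^a+\zeta^b+\zeta^c\}$ and grouping by $n$-cycles is a nice way to organize the ambient computation, but on the $Y$-side you should drop the appeal to Proposition~\ref{prop: QLP} and simply run the finite matrix computation.
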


 \begin{proof}
     For $n=7$,  $\mathcal{A}^0(X)=\mathbb{C}\{1, \sigma_{(4,3,0)}, \sigma_{(4, 2, 1)}, \sigma_{(3, 3, 1)}, \sigma_{(3, 2, 2)}\}$
     and $\mathcal{A}^0(Y)=\mathbb{C}\{1, j^*\sigma_{(4,1,1)},$ $ j^*\sigma_{(3, 3, 0)}, j^*\sigma_{(3, 2, 1)}, j^*\sigma_{(2, 2, 2)}\}$ with
     $j^*\sigma_{(4,2,0)}= 2 j^*\sigma_{(4,1,1)}+ j^*\sigma_{(3, 3, 0)}- j^*\sigma_{(3, 2, 1)}+j^*\sigma_{(2,2,2)}$.

      For $n=8$,  $\mathcal{A}^0(X)=\mathbb{C}\{1, \sigma_{(5,3,0)}, \sigma_{(5, 2, 1)}, \sigma_{(4,4,0)}, \sigma_{(4, 3, 1)}, \sigma_{(4, 2, 2)}, \sigma_{(3,3,2)}\}$
     and $\mathcal{A}^0(Y)=\mathbb{C}\{1,$ $ j^*\sigma_{(5,2,0)},$ $ j^*\sigma_{(5, 1, 1)}, j^*\sigma_{(4, 2, 1)}, j^*\sigma_{(4, 3, 0)},j^*\sigma_{(3, 2, 2)}, j^*\sigma_{(3, 3, 1)}, j^*\sigma_{(5, 5, 4)}, \beta\}$, where $\beta$ is a primitive class and we have $j^*\sigma_1\star \beta=0$.

     Then the statements follow from direct  calculations by using  Proposition \ref{prop: quanPieriY}
\end{proof}

 \begin{prop}\label{prop:conjpart2true}
    Let   $Y$ be a smooth hyperplane section of $X=Gr(3, n)$ with $n\in\{7, 8\}$. There is an isomorphism of algebras $$\mathcal{A}^0(X)\overset{\cong}{\longrightarrow} \mathcal{A}^0_\perp(Y)$$ with $\sigma_1^{r_X}\mapsto ((j^*\sigma_1)^{r_Y})_\perp$.
 \end{prop}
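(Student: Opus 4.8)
The plan is to recognize both $\mathcal{A}^0(X)$ and $\mathcal{A}^0_\perp(Y)$ as semisimple commutative $\mathbb{C}$-algebras of the same dimension $d$, and then to arrange the isomorphism so that $\sigma_1^{r_X}$ maps to $((j^*\sigma_1)^{r_Y})_\perp$. The key point is that once both are identified with $\mathbb{C}^d$, an algebra isomorphism between them is just a relabeling of the $d$ minimal idempotents, so to make the two distinguished elements correspond one only needs them to have the same multiset of values on the simple factors, i.e.\ the same characteristic polynomial — and these characteristic polynomials are exactly what Lemma~\ref{lemma: GrYchp} computes. (Note $j^*$ itself cannot serve as the isomorphism: it preserves cohomological degree, whereas $\mathcal{A}^0(X)$ and $\mathcal{A}^0_\perp(Y)$ live in different cohomological degrees; the isomorphism is an abstract one.)

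I would first analyze the $Y$-side. Using the $\mathbb{Z}_{r_Y}$-grading, $\widehat{h_Y}^{r_Y}$ is degree-preserving, $Rad(Y)$ is a graded ideal, and $QH^\bullet(Y)=Rad(Y)\oplus S$ (the Fitting decomposition of $\widehat{h_Y}^{r_Y}$, with $S=\operatorname{im}(\widehat{h_Y}^N)$ for $N\gg 0$), where $S$ is a unital graded subalgebra; self-adjointness of $\widehat{h_Y}^{r_Y}$ for the Poincar\'e pairing gives $S=Rad(Y)^\perp$. Since $\dim Y\equiv 0\pmod{r_Y}$ when $n=8$ (and since for $n=7$ one has $Rad(Y)\cap\mathcal{A}^{\overline{\dim Y}}(Y)=0$ by self-adjointness together with Lemma~\ref{lemma: GrYchp}), the definition of $\mathcal{A}^0_\perp(Y)$ unwinds to $\mathcal{A}^0_\perp(Y)=\mathcal{A}^0(Y)\cap S$, a unital subalgebra, and $((j^*\sigma_1)^{r_Y})_\perp$ is the projection of $(j^*\sigma_1)^{\star r_Y}$ to this summand. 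From Lemma~\ref{lemma: GrYchp} the characteristic polynomial of $\widehat{(j^*\sigma_1)^{r_Y}}$ on $\mathcal{A}^0(Y)$ has $0$ as a root of multiplicity $0$ when $n=7$ (constant term $128$) and of multiplicity $2$ when $n=8$ (the visible factor $x^2$; here the primitive class $\beta$ lies in $Rad(Y)$ because $j^*\sigma_1\star\beta=0$ by Proposition~\ref{prop: quanPieriY}). Hence $\dim\mathcal{A}^0_\perp(Y)=\dim\mathcal{A}^0(Y)-\dim(Rad(Y)\cap\mathcal{A}^0(Y))$ equals $\dim\mathcal{A}^0(X)$ in both cases ($5$, resp.\ $7$), and $\mathcal{A}^0_\perp(Y)=\mathcal{A}^0(Y)$ when $n=7$.

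Next I would establish semisimplicity and carry out the comparison. In each algebra I would single out an element whose multiplication operator has a \emph{separable} characteristic polynomial of degree equal to the dimension; such an element generates the algebra and identifies it with $\mathbb{C}[x]/(\text{that polynomial})\cong\mathbb{C}^d$. For $n=7$ take $\sigma_1^{r_X}$ on $\mathcal{A}^0(X)$ and $(j^*\sigma_1)^{r_Y}$ on $\mathcal{A}^0_\perp(Y)=\mathcal{A}^0(Y)$, with common characteristic polynomial $(128-13x+x^2)(1-57x-289x^2+x^3)$, which has five distinct roots. For $n=8$ the element $\sigma_1^{r_X}$ fails (its polynomial $(1-x)^3(1-1154x+x^2)(6561-34x+x^2)$ has a triple root), so I would instead use $\sigma_1^{6}\star\sigma_{1^2}$ on $\mathcal{A}^0(X)$ and $(j^*\sigma_1)^{5}\star j^*\sigma_{1^2}$ on $\mathcal{A}^0_\perp(Y)$, whose characteristic polynomials on these algebras both equal $(1-x)(1+478x-x^2)(1+x^2)(2187+6x+x^2)$ (seven distinct roots) — on the $Y$-side after cancelling the $x^2$ coming from the nilpotent action on $Rad(Y)\cap\mathcal{A}^0(Y)$, by Lemma~\ref{lemma: GrYchp}(2). (The same computation, via the nonzero constant terms of the $\widehat{\sigma_1^{r_X}}$-polynomials, also yields $\mathcal{A}^0_\perp(X)=\mathcal{A}^0(X)$.) Finally, in the factor decompositions $\mathcal{A}^0(X)\cong\mathbb{C}^d$ and $\mathcal{A}^0_\perp(Y)\cong\mathbb{C}^d$, the tuple of values of $\sigma_1^{r_X}$ is the multiset of roots of $p_{\sigma_1^{r_X},\mathcal{A}^0(X)}$, the tuple of values of $((j^*\sigma_1)^{r_Y})_\perp$ is the multiset of roots of $p_{(j^*\sigma_1)^{r_Y},\mathcal{A}^0(Y)}$ after removing the $x^{\dim(Rad(Y)\cap\mathcal{A}^0(Y))}$ factor, and by Lemma~\ref{lemma: GrYchp} these two multisets coincide (literally for $n=7$; for $n=8$ because $p_{(j^*\sigma_1)^{r_Y},\mathcal{A}^0(Y)}=x^2\,p_{\sigma_1^{r_X},\mathcal{A}^0(X)}$). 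Choosing the relabeling of simple factors that matches the tuples produces the required isomorphism sending $\sigma_1^{r_X}$ to $((j^*\sigma_1)^{r_Y})_\perp$.

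I expect the main obstacle to lie not in this final bookkeeping but beneath Lemma~\ref{lemma: GrYchp}: making the quantum product on $QH^\bullet(Y)$ explicit enough through the quantum Pieri rule of Proposition~\ref{prop: quanPieriY} — including pinning down the primitive class and verifying that the chosen elements really generate the degree-zero pieces — and carefully organizing the Fitting-plus-orthogonality splitting of $QH^\bullet(Y)$ so that $\mathcal{A}^0_\perp(Y)$ is correctly recognized as the quotient of $\mathcal{A}^0(Y)$ by its $h_Y$-radical part. The case $n=8$, with a genuine primitive direction inside $Rad(Y)$ and the deceptively non-semisimple-looking operator $\widehat{\sigma_1^{r_X}}$ (repaired only by passing to $\widehat{\sigma_1^{6}\star\sigma_{1^2}}$), is the delicate one.
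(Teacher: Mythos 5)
Your argument is correct and relies on the same computational input (Lemma~\ref{lemma: GrYchp}), but the final step for $n=8$ — matching $\sigma_1^{r_X}$ to $((j^*\sigma_1)^{r_Y})_\perp$ — proceeds differently from the paper. The paper's proof exhibits the concrete presentation $\mathcal{A}^0(X)\cong\mathbb{C}[x]/(p_{\tilde\alpha_X})$ via the cyclic element $\tilde\alpha_X=\sigma_1^6\star\sigma_{1^2}$ (and similarly for $\mathcal{A}^0_\perp(Y)$ via $(\tilde\alpha_Y)_\perp$), pinning down the isomorphism by $\tilde\alpha_X\mapsto x\mapsto(\tilde\alpha_Y)_\perp$, and then verifies by direct computation that $\alpha_X$ and $(\alpha_Y)_\perp$ have identical linear expansions in powers of $\tilde\alpha$ — so this canonical isomorphism already does the job. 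You instead use the separability of $p_{\tilde\alpha}$ to establish semisimplicity first, identify both algebras abstractly with $\mathbb{C}^d$, and invoke the structural fact that two elements of $\mathbb{C}^d$ can be exchanged by an algebra automorphism if and only if their eigenvalue multisets coincide — which Lemma~\ref{lemma: GrYchp} hands you directly via $p_{(j^*\sigma_1)^{r_Y},\mathcal{A}^0(Y)}=x^2\,p_{\sigma_1^{r_X},\mathcal{A}^0(X)}$. This trades the paper's explicit coordinate check for a cleaner existence argument, at the cost of needing semisimplicity as an input (the paper's cyclic-vector presentation is valid without it). Your preliminary unpacking of $\mathcal{A}^0_\perp(Y)$ as $\mathcal{A}^0(Y)\cap S$ via the Fitting decomposition and self-adjointness of $\widehat{h_Y}^{r_Y}$, and the resulting dimension count, are correct and make explicit what the paper leaves tacit.
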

 \begin{proof}
     Denote $\alpha_X:=\sigma_1^{r_X}$ and $\alpha_Y:=(j^*\sigma_1)^{r_Y}$. For $n=7$, by direct calculations,    $\{1, \alpha_X, \cdots, \alpha_X^4\}$  (resp.  $\{1, \alpha_Y, \cdots, \alpha_Y^4\}$) form an additive basis of $\mathcal{A}^0(X)$ (resp. $\mathcal{A}^0(Y)$). Therefore the minimal polynomial of $\alpha_X$ (resp. $\alpha_Y$) is given by the characteristic polynomial of it. Therefore, by Lemma \ref{lemma: GrYchp},  $\mathcal{A}^0(X)$ (resp.  $\mathcal{A}^0(Y)$) is isomorphic to $\mathbb{C}[x]/(p_{\alpha_X, \mathcal{A}^0(X)}(x))$ by sending $\alpha_X$ (resp. $\alpha_Y$) to $x$.

  Now we consider $n=8$ and denote $\tilde \alpha_X:=\sigma_1^{r_X-2}\star \sigma_{1^2}, \tilde \alpha_Y:=  (j^*\sigma_1)^{r_Y-2}\star j^*\sigma_{1^2} $. By exactly the same argument for $n=7$,
   we conclude that $\mathcal{A}^0(X)$ (resp.  $\mathcal{A}^0_\perp(Y)$) is isomorphic to $\mathbb{C}[x]/(p_{\tilde\alpha_X, \mathcal{A}^0(X)}(x))$ by sending $\tilde \alpha_X$ (resp. $(\tilde\alpha_Y)_\perp$) to $x$. By directly calculations, we see that   $\alpha_X$ and $(\alpha_Y)_\perp$ have exactly the same linear expansion in terms of  $\{\tilde \alpha_X^i\}_i$ and
   $\{(\tilde \alpha_Y^i)_\perp\}_i$ respectively. Hence $\alpha_X$ is sent to $(\alpha_Y)_\perp$ under the isomorphism.
 \end{proof}

\bigbreak

\begin{thm}\label{thm: conjtrueforGr}
 Let   $Y$ be a smooth hyperplane section of $X=Gr(3, n)$ with $n\in\{7, 8\}$. Then both Proposition \ref{prop: QLP} and  Conjecture \ref{conj: QLP} hold  for $Y$.
\end{thm}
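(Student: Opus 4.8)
The plan is to verify all three assertions---Proposition~\ref{prop: QLP}(1), Proposition~\ref{prop: QLP}(2) and Conjecture~\ref{conj: QLP}---directly for $X=Gr(3,n)$ with $n\in\{7,8\}$, feeding on the explicit quantum Pieri rule for $Y$ (Proposition~\ref{prop: quanPieriY}), the characteristic polynomial identities of Lemma~\ref{lemma: GrYchp}, and the algebra isomorphism of Proposition~\ref{prop:conjpart2true}. Throughout, $r_X=n$, $r_Y=n-1$, $h_X=\sigma_1$ and $h_Y=j^*\sigma_1$, so $r_X-1=r_Y>1$ and $r_X-r_Y=1$; in particular no dilation will arise in part~(2), and the value $j^*_q(q_X)=q_Yh_Y$ predicted by part~(1) is exactly what the construction must reproduce. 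Also $\dim Y>2$ in both cases, so Proposition~\ref{prop: QLP} applies.

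\emph{Part~(1): construction of $j^*_q$.} Starting from the Borel presentation \eqref{QHGr}, I would define a $\mathbb{C}[q_Y]$-algebra map from the free polynomial ring $\mathbb{C}[\sigma_1,\dots,\sigma_{1^k},q_X]$ to $QH^*(Y)$ by $\sigma_{1^p}\mapsto j^*\sigma_{1^p}$ (for $1\le p\le 3$) and $q_X\mapsto q_Yh_Y=q_Y\,j^*\sigma_1$, products on the target being quantum products in $QH^*(Y)$; it then suffices to check that the three defining relations $\sigma_{n-k+1},\dots,\sigma_{n-1},\ \sigma_n+(-1)^kq_X$ map to $0$. Expanding each $\sigma_a$ ($n-k+1\le a\le n$) via the Chern identity $c(\mathcal S)c(\mathcal Q)=1$, i.e.\ the recursion \eqref{relationGr}, and evaluating with $\star=\star_Y$ by iterated use of Proposition~\ref{prop: quanPieriY}, the purely classical contributions reassemble into $j^*$ of the (vanishing) classical relation on $Gr(k,n)$, while---by the precise shape of Proposition~\ref{prop: quanPieriY}---the only quantum corrections stem from the degree-one-in-$q_X$ part of the quantum Pieri rule on $X$, carrying one extra factor $\cup\,\sigma_1$; threading this factor through the recursion turns the relation $\sigma_n=(-1)^{k+1}q_X$ of $X$ into $j^*_q(\sigma_n)=(-1)^{k+1}q_Y\,h_Y$ and keeps $j^*_q(\sigma_a)=0$ for $a<n$. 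That no spurious higher powers of $q_Y$ appear is exactly what Proposition~\ref{prop: quanPieriY} guarantees for these $(k,n)$, so this reduces to a short explicit computation with $\sigma_{n-k+1},\dots,\sigma_{n-1},\ \sigma_n-q_X$; this descent check is the only genuinely non-formal step, and it is precisely where the restriction $n\in\{7,8\}$ enters. Once it is done, the square commutes ($j^*_q(\sigma_{1^p})=j^*\sigma_{1^p}$ reduces to $j^*\sigma_{1^p}$ modulo $q_Y$ and $j^*_q(q_X)\in(q_Y)$), and $j^*_q(q_X)=q_Yh_Y\ne 0$ is the asserted value since $r_X-1=r_Y>1$.

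\emph{Part~(2): spectra.} Since $h_X\in H^2(X)$, the operator $\widehat{h_X}$ shifts the $\mathbb Z_{r_X}$-grading by one, so $\widehat{h_X}^{\,r_X}$ preserves each $\mathcal A^{\bar i}(X)$; and if $\widehat{h_X}^{\,r_X}v=\mu v$ with $\mu\ne 0$ and $v\in\mathcal A^{\bar i}(X)$ then $\widehat{h_X}^{\,r_X-i}v$ is a nonzero $\mu$-eigenvector lying in $\mathcal A^{0}(X)$ (nonzero because $\widehat{h_X}^{\,mr_X}v=\mu^m v\ne 0$ for every $m$). Hence the set of nonzero eigenvalues of $\widehat{h_X}^{\,r_X}$ on $QH^\bullet(X)$ equals the set of nonzero roots of $p_{\sigma_1^{r_X},\,\mathcal A^0(X)}(x)$, and likewise for $Y$. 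Lemma~\ref{lemma: GrYchp} then finishes it: for $n=7$, $p_{\sigma_1^{7},\,\mathcal A^0(X)}=p_{(j^*\sigma_1)^{6},\,\mathcal A^0(Y)}$ has nonzero constant term, so the two spectra coincide; for $n=8$, $p_{(j^*\sigma_1)^{7},\,\mathcal A^0(Y)}(x)=x^2\,p_{\sigma_1^{8},\,\mathcal A^0(X)}(x)$ with $p_{\sigma_1^{8},\,\mathcal A^0(X)}(0)\ne 0$, so the nonzero eigenvalues agree (the factor $x^2$ contributing only the eigenvalue $0$). As $r_X-r_Y=1$, no dilation is required, so Proposition~\ref{prop: QLP}(2) holds.

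\emph{Conjecture~\ref{conj: QLP}.} Since $X=Gr(3,n)$ is cominuscule, $QH^*(X)$ is semisimple \cite{CMP}, so $Rad(X)=0$ and $\mathcal A^0_\perp(X)=\mathcal A^0(X)$. On the $Y$ side, $\widehat{c_1(Y)}$ is self-adjoint for the Poincar\'e pairing, giving an orthogonal decomposition $QH^\bullet(Y)=Rad(Y)\oplus Rad(Y)^\perp$ into ideals with $Rad(Y)\cap Rad(Y)^\perp=0$; since $\widehat{c_1(Y)}$ respects the $\mathbb Z_{r_Y}$-grading this refines to $\mathcal A^0(Y)=(\mathcal A^0(Y)\cap Rad(Y))\oplus\mathcal A^0_\perp(Y)$, and $\widehat{h_Y}^{\,r_Y}$ preserves $\mathcal A^0_\perp(Y)$. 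Proposition~\ref{prop:conjpart2true} then furnishes an injective algebra map $f\colon \mathcal A^0_\perp(X)=\mathcal A^0(X)\overset{\cong}{\longrightarrow}\mathcal A^0_\perp(Y)$ with $f(\sigma_1^{r_X})=((j^*\sigma_1)^{r_Y})_\perp$, and for $\gamma\in\mathcal A^0(X)$ one has $f(\sigma_1^{r_X}\star\gamma)=((j^*\sigma_1)^{r_Y})_\perp\star f(\gamma)=(j^*\sigma_1)^{r_Y}\star f(\gamma)$, the last equality because $(j^*\sigma_1)^{r_Y}-((j^*\sigma_1)^{r_Y})_\perp\in Rad(Y)$ while $f(\gamma)\in Rad(Y)^\perp$ and $Rad(Y)\star Rad(Y)^\perp\subseteq Rad(Y)\cap Rad(Y)^\perp=0$. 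Thus $f$ intertwines $\widehat{h_X}^{\,r_X}$ with $\widehat{h_Y}^{\,r_Y}$, which is Conjecture~\ref{conj: QLP}. The main obstacle in the whole argument remains the finite but delicate bookkeeping in the descent check of Part~(1).
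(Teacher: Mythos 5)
Your proposal matches the paper's own proof in strategy and in every key ingredient. For Part~(1) the paper constructs $j^*_q$ exactly as you do, from the Borel presentation \eqref{QHGr} with $\sigma_{1^p}\mapsto j^*\sigma_{1^p}$ and $q_X\mapsto q_Y\,j^*\sigma_1$; it observes that $j^*_q(\sigma_{n-2})=0$ is automatic from the grading since $n-2<r_Y$, and then records the explicit polynomials $h_6,h_7,h_8$ (expressing $\sigma_{n-1}$, $\sigma_n$ in $e_1,e_2,e_3$) whose quantum evaluation via Proposition~\ref{prop: quanPieriY} gives $j^*_q(\sigma_{n-1})=0$ and $j^*_q(\sigma_n)=q_Ye_1$. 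Your ``threading the extra $\cup\,\sigma_1$ through the recursion'' narrative is a plausibility argument rather than a proof, but you correctly flag that the descent check reduces to a finite computation, which is precisely what the paper performs. For Part~(2) and for Conjecture~\ref{conj: QLP} the paper simply says these follow from Lemma~\ref{lemma: GrYchp} and Proposition~\ref{prop:conjpart2true}; your two supporting observations---that a nonzero eigenvector of $\widehat{h_X}^{\,r_X}$ can be pushed into $\mathcal{A}^0$, and that $Rad(Y)\star Rad(Y)^\perp=0$ lets $f$ intertwine $\widehat{h_X}^{\,r_X}$ with $\widehat{h_Y}^{\,r_Y}$---are exactly the short verifications the paper leaves implicit, so you have usefully filled in the ``directly.''
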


 \begin{proof}
     Conjecture \ref{conj: QLP}  and  Part (2)   of Proposition \ref{prop: QLP} follow directly  from Lemma \ref{lemma: GrYchp} and Proposition \ref{prop:conjpart2true} respectively.

    Recall   $QH^*(Gr(3, n))\cong \mathbb{C}[\sigma_1, \sigma_{1^2}, \sigma_{1^3}, q_X]/(\sigma_{n-2}, \sigma_{n-1}, \sigma_n-q_X)$,  where $\sigma_{j}$ can be obtained by the  relation \eqref{relationGr}.
Denote by $e_1=j^*\sigma_1, e_2=j^*\sigma_{1^2}$ and $e_3=j^*\sigma_{1^3}$. Note $j^*_q(\sigma_{n-2})=0$, as $n-2<r_Y$.
To verify part (1) of Proposition \ref{prop: QLP}, it suffices to show that $$j^*_q(\sigma_{n-1})=0,\qquad j^*_q(\sigma_n)=j^*_q(q_X)=q_Y e_1,$$ where  $j^*_q(\sigma_{n-1})=h_6$, $j^*_q(\sigma_{n})=h_7$ for $n=7$ (resp. $j^*_q(\sigma_{n-1})=h_7$, $j^*_q(\sigma_{n})=h_8$ for $n=8$) with
    \begin{align*}
h_6&=e_{1}^{6} - 5 e_{1}^{4} e_{2} + 6 e_{1}^{2} e_{2}^{2} + 4 e_{1}^{3} e_{3} - e_{2}^{3} - 6 e_{1} e_{2} e_{3} + e_{3}^{2},\\
h_7&=e_{1}^{7} - 6 e_{1}^{5} e_{2} + 10 e_{1}^{3} e_{2}^{2} + 5 e_{1}^{4} e_{3} - 4 e_{1} e_{2}^{3} - 12 e_{1}^{2} e_{2} e_{3} + 3 e_{2}^{2} e_{3} + 3 e_{1} e_{3}^{2},\\
h_8&=e_{1}^{8} - 7 e_{1}^{6} e_{2} + 15 e_{1}^{4} e_{2}^{2} + 6 e_{1}^{5} e_{3} - 10 e_{1}^{2} e_{2}^{3} - 20 e_{1}^{3} e_{2} e_{3} + e_{2}^{4} + 12 e_{1} e_{2}^{2} e_{3} + 6 e_{1}^{2} e_{3}^{2} - 3 e_{2} e_{3}^{2}.
\end{align*}
Then we are done by direct calculations using Proposition \ref{prop: quanPieriY}.
\end{proof}

\subsection{Semisimplicity of $QH^*(Y)$}
Here we investigate the semisimplicity of $QH^*(Y)$ for a smooth hyperplane section $Y$ of $X=Gr(k, n)$. We will need to apply  the following proposition in the case $(k, n)=(3, 8)$.

{
Let $\pi: \mathcal{Y}\to U$ be a projective smooth morphism.
Let $p\in U$ be any point and denote $\mathcal{Y}_p:=\pi^{-1}(p)$ the fiber of $\pi$ at $p$.
There is a \emph{monodromy action} (see e.g.  \cite[Chapter 3]{Voisin}) of the fundamental group $\pi_1(U,p)$ on the classical cohomology $H^*(\mathcal{Y}_p)$.

\begin{prop}[\protect{\cite[Theorem 4.3]{LiTi}, see also \cite[Corollary 3.2]{Hu15}}] \label{prop: monodromy}
The monodromy action of $\pi_1(U,p)$ on $H^*(\mathcal{Y}_p)$ naturally extends to an action on $QH^*(\mathcal{Y}_p)$ and preserves the quantum product.
\end{prop}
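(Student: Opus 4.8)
The plan is to realise the monodromy action by diffeomorphisms of the fibre and then invoke the deformation invariance of genus-zero Gromov--Witten invariants. First, since $\pi$ is a smooth proper morphism, Ehresmann's fibration theorem makes $\pi\colon\mathcal{Y}\to U$ a locally trivial $C^{\infty}$ fibre bundle, so parallel transport along a loop $\gamma$ in $U$ based at $p$ yields a diffeomorphism $\phi_\gamma\colon\mathcal{Y}_p\to\mathcal{Y}_p$, well defined up to isotopy and depending only on $[\gamma]\in\pi_1(U,p)$; the induced operators on $H^*(\mathcal{Y}_p)$ are the monodromy operators $\rho(\gamma)$, and $\phi_\gamma$ simultaneously induces a compatible action of $\pi_1(U,p)$ on $H_2(\mathcal{Y}_p,\mathbb{Z})$, hence on its group ring, i.e.\ on the quantum variables $\mathbf{q}$, with $\rho(\gamma)\mathbf{q}^{\mathbf{d}}=\mathbf{q}^{\rho(\gamma)\mathbf{d}}$. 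Define the candidate extension on $QH^*(\mathcal{Y}_p)=H^*(\mathcal{Y}_p)\otimes\mathbb{C}[\mathbf{q}]$ to be the diagonal action ($\rho(\gamma)$ on cohomology, the induced action on the $\mathbf{q}$'s); this is manifestly a group action restricting to the given monodromy action modulo $\mathbf{q}$, so the only thing to prove is that it is multiplicative for $\star$.

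The essential input is that the structure constants of $\star$, the invariants $\langle\gamma_1,\gamma_2,\gamma_3\rangle^{\mathcal{Y}_p}_{\mathbf{d}}$, are deformation invariants in smooth projective families. I would phrase this by forming the relative moduli space $\overline{\mathcal{M}}_{0,3}(\mathcal{Y}/U,\mathbf{d})\to U$ for $\mathbf{d}$ a flat section of $(R^2\pi_*\mathbb{Z})^{\vee}$: it is proper over $U$ (as $\mathcal{Y}$ is projective over $U$), it carries a relative perfect obstruction theory and hence a relative virtual class, and its evaluation maps land in $\mathcal{Y}\times_U\mathcal{Y}\times_U\mathcal{Y}$. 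Restriction of the relative virtual class to the fibre over $p$ recovers $[\overline{\mathcal{M}}_{0,3}(\mathcal{Y}_p,\mathbf{d}_p)]^{\mathrm{vir}}$ and commutes with the (proper) pushforward along the evaluation maps, so the three-point invariants of the fibres form a flat section over $U$ of the relevant local system; turning flatness into equivariance along a loop gives
$$\big\langle\rho(\gamma)\gamma_1,\ \rho(\gamma)\gamma_2,\ \rho(\gamma)\gamma_3\big\rangle^{\mathcal{Y}_p}_{\rho(\gamma)\mathbf{d}}=\big\langle\gamma_1,\gamma_2,\gamma_3\big\rangle^{\mathcal{Y}_p}_{\mathbf{d}}$$
for every $[\gamma]\in\pi_1(U,p)$ and all $\gamma_i\in H^*(\mathcal{Y}_p)$. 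In symplectic language this is exactly the deformation invariance of Gromov--Witten invariants, which I would quote from \cite{LiTi} rather than reprove.

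Feeding this identity into $\alpha\star\beta=\sum_k\sum_{\mathbf{d}}\langle\alpha,\beta,\alpha_k^\vee\rangle_{\mathbf{d}}\,\alpha_k\,\mathbf{q}^{\mathbf{d}}$, and using that $\rho(\gamma)$ preserves the Poincar\'e pairing (so it sends a basis to a basis with the dual basis mapped to the dual basis) together with $\rho(\gamma)\mathbf{q}^{\mathbf{d}}=\mathbf{q}^{\rho(\gamma)\mathbf{d}}$, one reads off $\rho(\gamma)(\alpha\star\beta)=\rho(\gamma)\alpha\star\rho(\gamma)\beta$; since $\rho(\gamma^{-1})$ provides a two-sided inverse, each $\rho(\gamma)$ is an algebra automorphism of $QH^*(\mathcal{Y}_p)$, which is the assertion. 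I would close by noting that in the case used in the paper --- a smooth hyperplane section $Y\subset Gr(k,n)$ with $\dim Y>2$ --- one has $H_2(Y,\mathbb{Z})\cong\mathbb{Z}$ with monodromy-invariant effective generator, so $\rho(\gamma)$ fixes $q$ and $\pi_1(U,p)$ in fact acts by $\mathbb{C}[q]$-algebra automorphisms of $QH^*(Y)$. The single genuinely technical point, and the main obstacle, is the deformation-invariance input and its packaging via a relative virtual class compatible with restriction to fibres; everything downstream is formal bookkeeping.
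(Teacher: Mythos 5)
Your proof is correct, but note that the paper does not actually prove this statement---it quotes it verbatim from Li--Tian and Hu, which is why the proposition carries those citations in its header. Your reconstruction (Ehresmann parallel transport, deformation invariance of the three-point invariants via a relative virtual class over $U$, then reindexing a Poincar\'e-dual basis in the structure-constant formula) is precisely the standard argument those references supply, so the approach agrees with the sources the paper relies on; the one point worth being explicit about, which you correctly sidestep at the end by noting $b_2(Y)=1$, is that one needs the monodromy on $H_2$ to preserve the effective cone so that the action on $\mathbb{C}[\mathbf{q}]$ is well defined.
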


For $Y$ in $Gr(3,  8)$, we consider the family
$\pi|_{U}: \mathcal{Y}^\circ\to U$ obtained by restricting the natural projection
$$\pi: \mathcal{Y}=\{(V,\tau)\in Gr(3,8)\times \Lambda^3(\mathbb{C}^8)^*\mid
V|_\tau =0\}\longrightarrow \Lambda^3(\mathbb{C}^8)^*.$$
to the unique Zariski open $GL_8$-orbit   $U$ of $\Lambda^3(\mathbb{C}^8)^*$. More precisely,  the above projection is  a vector bundle over $Gr(3,8)$ whose fiber at $V$ is the space $\ker [\Lambda^3(\mathbb{C}^8)^*\to \Lambda^3V^*]$. Therefore, the smooth hyperplane section  $Y$ of $Gr(3, 8)$ can be realized as a fiber over any $p\in U$.    Notice $H_{\rm prim}(Y)=\mathbb{C}\{\beta\}$.

\begin{lemma}\label{lemma: monobeta} For $Y$ in $Gr(3, 8)$,
there exists  $\sigma\in \pi_1(U)$ such that
$\sigma \beta=-\beta$ under  monodromy action.
\end{lemma}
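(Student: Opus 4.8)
The plan is to exhibit an explicit loop in $U$ whose monodromy acts by $-1$ on the one-dimensional primitive cohomology $H_{\rm prim}(Y)=\mathbb{C}\{\beta\}$. Since the monodromy representation of $\pi_1(U,p)$ on $H^*(\mathcal{Y}_p)=H^*(Y)$ preserves the Poincar\'e pairing and fixes the ambient part $j^*H^*(Gr(3,8))$ pointwise (the ambient classes extend to the total family), the action on the line $H_{\rm prim}(Y)$ is by a scalar $\varepsilon$ with $\varepsilon^2=1$, hence $\varepsilon\in\{\pm1\}$; the content of the lemma is that $\varepsilon=-1$ is actually realized. First I would recall that $U\subset\Lambda^3(\mathbb{C}^8)^*$ is the unique Zariski open $GL_8$-orbit, and that a point $\tau\in U$ corresponds, via the standard prehomogeneous-space description of $(\mathrm{GL}_8,\Lambda^3)$, to data with a finite stabilizer related to $\mathrm{SL}_3\times\mathrm{SL}_3\times\mathrm{SL}_3$ structure (the ``$E_8$ avatar'' alluded to in the highlights). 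The key point is that $\pi_1(U)=\pi_1(GL_8/\mathrm{Stab})$ surjects onto (a quotient of) $\pi_0$ of the stabilizer or carries a nontrivial element coming from the $GL_8$-action, and there is a distinguished element of order $2$.

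The cleanest route, which I would pursue, is to detect the sign topologically via a degeneration rather than through the representation theory of the stabilizer. Consider the discriminant hypersurface $\nabla\subset\mathbb{P}(\Lambda^3(\mathbb{C}^8)^*)$ parametrizing singular hyperplane sections; then $U$ (projectivized) is the complement of $\nabla$ together with the non-generic orbits. Picking a generic pencil (line) $\ell\subset\mathbb{P}(\Lambda^3(\mathbb{C}^8)^*)$ meeting $\nabla$ transversally, the restricted family over $\ell\setminus\nabla$ has local monodromy around each node given by the Picard--Lefschetz formula $x\mapsto x\pm(x,\delta)\delta$ for a vanishing cycle $\delta$. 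The vanishing cycle of a nodal degeneration of $Y$ (a $17$-fold, odd-dimensional) is primitive and anti-invariant, so each such local monodromy acts as $-1$ on $\mathbb{C}\{\delta\}$ provided one checks $\delta$ spans $H_{\rm prim}(Y)$ up to the ambient part — equivalently, that $H_{\rm prim}(Y)$ is generated by a single vanishing cycle, which follows from the irreducibility of the discriminant / the fact that $\dim H_{\rm prim}(Y)=1$ computed in Theorem \ref{mainthm:HodgehyperplaneGr}. Thus the small loop around one branch of $\nabla$ inside $\ell$, which lies in $U$, does the job, and I would take $\sigma$ to be (the image in $\pi_1(U,p)$ of) this Picard--Lefschetz loop.

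The main obstacle is bookkeeping the precise sign in Picard--Lefschetz for the odd fiber dimension and, relatedly, making sure the vanishing cycle is not annihilated by the self-intersection form (for odd-dimensional $Y$ the cup-product form on $H^{\dim Y}$ is skew, so $(\delta,\delta)=0$, and one must instead argue that the order-two monodromy comes from the full local system structure — concretely, that the local monodromy of an $A_1$-degeneration of an odd-dimensional variety is the reflection $x\mapsto x+(x,\delta)\delta$ which has order $2$ and acts as $-1$ on $\delta$ precisely because $(\delta,\delta)=0$ forces $\delta\mapsto -\delta$ only after a half-twist; here I would appeal to the standard statement that for $\dim Y$ odd the Lefschetz monodromy on the vanishing cycle is $-1$, e.g. as in Voisin's account cited in the paper as \cite{Voisin}). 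A secondary point to verify is that the generic pencil $\ell$ can be chosen so that $\ell\setminus\nabla\subset U$, i.e. that a generic line avoids the lower-dimensional non-generic $GL_8$-orbits in $\mathbb{P}(\Lambda^3\mathbb{C}^8)$; this holds because those orbits have codimension $\ge 2$, so a generic line misses them, and then the small loop around a point of $\ell\cap\nabla$ is genuinely a loop in $U$ based near $p$, after connecting by a path. Once these are in place, $\sigma\beta=-\beta$ follows, possibly after rescaling $\beta$ so that it coincides with the vanishing cycle.
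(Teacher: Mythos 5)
Your proposal takes a genuinely different route from the paper: you detect the sign via Picard--Lefschetz monodromy around the discriminant, whereas the paper simply invokes Deligne's Global Invariant Cycle Theorem \cite[Theorem 4.24]{Voisin} to identify the monodromy-invariant subalgebra with the ambient image $j^*H^*(X)$, observes that $\beta$ is not in that image (so not invariant), and then pins down the scalar to $-1$ because $\int_Y\beta^2\neq 0$ is preserved. The first paragraph of your proposal, reducing to $\varepsilon\in\{\pm1\}$ and to showing $\varepsilon=-1$ is realized, is fine and mirrors the paper's last step.

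However, your execution of the Picard--Lefschetz step contains a concrete error. For a smooth hyperplane section $Y\subset Gr(3,8)$ we have $\dim Y = 3\cdot 5 - 1 = 14$, which is \emph{even}, not $17$. (You appear to have confused this with $Y\subset Gr(3,9)$, which is $17$-dimensional and is \emph{not} Hodge--Tate.) This matters, because your subsequent discussion of the odd-dimensional case is incorrect: when $\dim Y$ is odd the intersection form on the middle cohomology is alternating, $(\delta,\delta)=0$, and the local monodromy $x\mapsto x+(x,\delta)\delta$ is a \emph{transvection} of infinite order that fixes $\delta$; it does not send $\delta$ to $-\delta$, and there is no ``half-twist'' producing order two. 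Had $Y$ really been odd-dimensional, your argument would collapse. It is precisely because $\dim Y=14$ is even that the cup-product form on $H^{14}(Y)$ is symmetric (which is also why the paper can write $\int_Y\beta^2\neq 0$), the vanishing cycle has $(\delta,\delta)=\pm 2$, and Picard--Lefschetz is a genuine reflection with $\delta\mapsto-\delta$. Once you correct the dimension, the rest of your argument — generic Lefschetz pencil, vanishing cycles span $H_{\rm prim}(Y)$ which is one-dimensional, hence $\delta$ is proportional to $\beta$ — goes through, and you obtain an explicit loop realizing $\sigma\beta=-\beta$. The comparison to make is that your corrected route gives geometric control over the realizing loop at the cost of Lefschetz pencil machinery, while the paper's invariant-cycle argument is shorter and needs no discriminant analysis.
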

\begin{proof}
By Deligne invariant cycle theorem \cite[Theorem 4.24]{Voisin}, the image of the restriction
$$H^*(X,\mathbb{Q})\cong H^{*}(\mathcal{Y},\mathbb{Q})\to H^{*}(Y,\mathbb{Q})$$
is the monodromy invariant subalgebra. In particular, the restriction of $\sigma_1\in H^2(X,\mathbb{Q})$ is invariant, so the primitive space $\mathbb{Q}\beta$ is preserves. Since $\beta$ is not in the image, there exists an $\sigma\in\pi_1(U)$ such that $\beta \neq \sigma\beta\in \mathbb{Q}\beta$.
Note that $\int_{Y}\beta^2=\int_Y \sigma\beta^2\neq 0$, the only possibility is $\sigma\beta=-\beta$.
\end{proof}

\begin{thm}\label{mainthm:QHhyperplaneGr}
Let $Y$ be a smooth hyperplane section of $Gr(k, n)$ where $n\geq 2k$. Then $QH^*(Y)$ is generically semisimple if and only if one of the following holds:
$$(a)\,\, k=1; \qquad (b)\,\, k=2 \mbox{ and either } n=4 \mbox{ or } n \mbox{ is odd}; \qquad (c)\,\, k=3  \mbox{ and } n \in \{7, 8\}.$$
\end{thm}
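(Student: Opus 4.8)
The plan is to split into the three ranges of $k$. For $k=1$ one has $Y\cong\mathbb{P}^{n-2}$, whose small quantum cohomology is generically semisimple by the classical computation (or by the toric case \cite{Bat,OsTy}), so (a) is immediate. For $k=2$ the identifications $Y\simeq C_m/P_2$ when $n=2m$ and the quasi-homogeneous description when $n$ is odd already appear in the introduction: by \cite{ChPe} the case $n=2m$ with $m>1$ is non-semisimple, by \cite{Pec,Per} the case $n$ odd with $m>1$ is generically semisimple, and $n=4$ (where $C_2/P_2$ is a quadric) is semisimple. Combined with Theorem \ref{mainthm:HodgehyperplaneGr} and Proposition \ref{prop:hodge}, this forces non-semisimplicity for all $k\geq 3$ except possibly $(k,n)\in\{(3,6),(3,7),(3,8),(4,8)\}$, since in every other case with $k\geq 3$ the cohomology $H^*(Y)$ is not Hodge--Tate. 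So the theorem reduces to four explicit cases, of which two must be shown non-semisimple ($(3,6)$ and $(4,8)$) and two semisimple ($(3,7)$ and $(3,8)$).

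For $(k,n)=(3,6)$: here $Gr(3,6)=A_5/P_3$ and, as flagged in the introduction, a direct application of Theorem \ref{mainthm:ss}(2) suffices — I would read off $\dim Y$, $r_Y$ and the Betti numbers of $Y$ from the Hodge diamond computed in the proof of Theorem \ref{mainthm:HodgehyperplaneGr}, form the index-periodic Betti numbers $\tilde b_{\bar i}$, and exhibit an $\bar i$ with $\tilde b_{\bar i}\neq\tilde b_{-\bar i}$. For $(k,n)=(4,8)$: same strategy, using that $\dim Gr(4,8)=16=2r$ so $Y$ has small index and a short period, making the periodic Betti-number obstruction of Theorem \ref{mainthm:ss}(1)/(2) easy to check against the diamond $1,1,2,3,5,5,7,7,8,7,7,5,5,3,2,1,1$ recorded above.

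For $(k,n)=(3,7)$ and $(3,8)$, the strategy is to show $QH^*(Y)$ is semisimple by analyzing the operator $\widehat{j^*\sigma_1}$ (at $q_Y=1$) via the quantum Pieri rule of Proposition \ref{prop: quanPieriY}. The quantum cohomology $QH^\bullet(Y)$ decomposes, using the $\mathbb{Z}_r$-grading and the Galkin--Iritani spectral comparison of Proposition \ref{prop: QLP}(2), into blocks indexed by the nonzero eigenvalues of $\widehat{h_Y}^{r_Y}$ (which match those of $\widehat{h_X}^{r_X}$ on $X=Gr(3,n)$, a cominuscule Grassmannian with semisimple quantum cohomology \cite{CMP}) together with a possible radical part supported on $\mathcal{A}^0$. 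By Proposition \ref{prop:conjpart2true} and Lemma \ref{lemma: GrYchp}, on the orthogonal part $\mathcal{A}^0_\perp(Y)$ the operator $\widehat{j^*\sigma_1}^{r_Y}$ has the same (squarefree, since $QH^*(Gr(3,n))$ is semisimple) characteristic polynomial as $\widehat{\sigma_1}^{r_X}$ on $\mathcal{A}^0(X)$, so $\mathcal{A}^0_\perp(Y)$ is semisimple; combined with Lemma \ref{lemma: AtoAy} this handles all of $QH^\bullet(Y)$ except the contribution of $Rad(Y)$, i.e. the part paired nontrivially with nilpotents. For $n=7$ one has $H_{\rm prim}(Y)=0$ and the factor $x^2$ appearing in Lemma \ref{lemma: GrYchp}(2) is absent, so $\mathcal{A}^0(Y)=\mathcal{A}^0_\perp(Y)$ and one concludes directly. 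For $n=8$, $H_{\rm prim}(Y)=\mathbb{C}\beta$ and $j^*\sigma_1\star\beta=0$, so $\beta$ contributes a priori a nilpotent; the point is that the monodromy involution $\sigma\beta=-\beta$ of Lemma \ref{lemma: monobeta} makes $QH^*(Y)$ into a $\mathbb{Z}_2$-graded algebra with $\beta$ in the odd part, and one must rule out nilpotents compatibly with this grading — here I expect to invoke the structure of the eigenvalue blocks together with the squarefreeness on $\mathcal{A}^0_\perp(Y)$ to show the only possible nilpotent directions ($\beta$ and its translates) in fact generate a semisimple (diagonalizable) part after accounting for the full ring.

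The main obstacle is the $(3,8)$ semisimplicity argument: one has a genuine primitive class $\beta$ with $j^*\sigma_1\star\beta=0$, so naive degree counting alone does not certify semisimplicity, and the quantum Pieri rule of Proposition \ref{prop: quanPieriY} only controls multiplication by the $j^*\sigma_{1^p}$. I expect the resolution to combine three inputs that are all available in the excerpt: (i) the spectral decomposition away from $\mathcal{A}^0$ coming from Proposition \ref{prop: QLP}(2) and semisimplicity of $QH^*(Gr(3,8))$; (ii) the explicit squarefree characteristic polynomial on $\mathcal{A}^0_\perp(Y)$ from Lemma \ref{lemma: GrYchp}; and (iii) the $\mathbb{Z}_2$-grading from Lemma \ref{lemma: monobeta}, which confines $\beta$ and forces the quantum product of $\beta$ with itself to land in the (semisimple) even part, so that $\beta^2$ is a non-nilpotent element and hence $\beta$ itself is not nilpotent — completing the verification that no nonzero element of $QH^\bullet(Y)$ is nilpotent.
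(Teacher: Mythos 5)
Your overall skeleton matches the paper's: reduce via Theorem \ref{mainthm:HodgehyperplaneGr} and Proposition \ref{prop:hodge} to the Hodge--Tate cases, handle $k\in\{1,2\}$ by citation, kill $(3,6)$ with Theorem \ref{mainthm:ss}(2), and treat $(3,7)$, $(3,8)$ via the isomorphism $\mathcal{A}^0_\perp(Y)\cong\mathcal{A}^0(X)$ of Proposition \ref{prop:conjpart2true}, Lemma \ref{lemma: AtoAy}, and (for $(3,8)$) the monodromy involution of Lemma \ref{lemma: monobeta}. Two small slips: $(4,8)$ is \emph{not} Hodge--Tate (see the diamond in Theorem \ref{mainthm:HodgehyperplaneGr}), so it needs no separate argument; and the characteristic polynomial $p_{\sigma_1^8,\mathcal{A}^0(X)}(x)$ in Lemma \ref{lemma: GrYchp}(2) is not squarefree (it has the factor $(1-x)^3$) — what transfers is the \emph{ring} semisimplicity of $\mathcal{A}^0(X)$, not squarefreeness of one characteristic polynomial.

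The genuine gap is in your treatment of $(3,8)$. You observe via the $x^2$ factor that $\mathrm{Rad}(Y)$ is two-dimensional, but then track only the primitive class $\beta$. The second basis vector of $\mathrm{Rad}(Y)$ is a class $j^*\gamma$ coming from $H^*(X)$, hence lying in the monodromy-\emph{invariant} (even) part. Your step (iii) asserts that the even part is semisimple, and this is precisely what cannot be taken for granted: a priori $j^*\gamma$ could be nilpotent, in which case the even part would contain a nilpotent. The paper closes this hole by a concrete computation — it writes down $\gamma$ explicitly, checks $\int_Y j^*\gamma\neq 0$ to see $j^*\gamma$ is not nilpotent (using that the nilradical of the $2$-dimensional algebra $\mathrm{Rad}(Y)$ has dimension $\leq 1$ so any nilpotent $\epsilon$ satisfies $\epsilon^{\star 2}=0$), then uses the monodromy $\sigma\beta=-\beta$ to show that for $a\beta+bj^*\gamma$ nilpotent, both $b=0$ (since $2bj^*\gamma$ would be nilpotent) and then $a=0$ (from $\int_Y\beta^2\neq 0$). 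Your proposal invokes the conclusion (``the semisimple even part'') without supplying the computation that makes it true, and never shows $\beta^{\star 2}\neq 0$; both facts are needed and neither is automatic from Proposition \ref{prop: QLP} or the semisimplicity of $QH^*(Gr(3,8))$.
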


\begin{proof}
   Let $X=Gr(k, n)$ with $n\geq 2k$. For $k=1$, $Y$ is a projective space and hence $QH^*(Y)$ is generically semisimple.
 For $X=Gr(2, 4)$, $X$ is a quadric in $\mathbb{P}^5$, and hence $Y$ is a quadric in $\mathbb{P}^3$ with generically semisimple quantum cohomology. For $k=2$ and    $n=2m$ with $m>1$,  $Y\cong SG(2, 2m)$ and  $QH^*(Y)$ is known to be non-semisimple \cite{ChPe}.
 For    $k=2$ and  $n=2m+1$ with $m>1$, $Y\cong SG(2, 2m+1)$ is a quasi-homogeneous variety with  generically semisimple quantum cohomology \cite{Pec, Per}. For $k>3$ or ($k=3$ and $n>8$),
 $QH^*(Y)$ is not semisimple by Theorem \ref{mainthm:HodgehyperplaneGr} and Proposition \ref{prop:hodge}.

 Now we assume $k=3$. It remains to investigate the cases $n\in \{6, 7, 8\}$. For $n=6$, we notice that $\dim \mathcal{A}^{\bar 1}=b_2(Y)+b_{12}(Y)=1+2\neq 4=b_{8}(Y)=\dim \mathcal{A}^{-\bar 1}$. Hence, $QH^*(Y)$ is not semisimple in this case.

 Now we consider $n\in \{7, 8\}$. Notice $\mathcal{A}^0_\perp (Y)=\mathcal{A}^0(Y) $ and $Rad(Y)=0$ if $n=7$.  It is well known that $QH^*(X)$ is semisimple, so does the subalgebra $\mathcal{A}^0(X)$. By Proposition \ref{prop:conjpart2true},
  $\mathcal{A}^0_\perp (Y) \cong \mathcal{A}^0(X)$ is semisimple. By Lemma \ref{lemma: GrYchp},
  the restriction of  $(j^*\sigma_1)^{r_Y}$ to the complement of $Rad(Y)$ in $QH^*(Y)$ is invertible, so is $j^*\sigma_1$. Hence,
   $\mathcal{A}^0_\perp(Y)[j^*\sigma_1]\cong \mathcal{A}^0_\perp(Y)[y]/(y^{r_Y}-(j^*\sigma_1)^{r_Y})$ is semisimple by Lemma \ref{lemma: AtoAy}, and it is of dimension 49.
   Now we have $QH^*(Y)=Rad(Y)\bigoplus \mathcal{A}^0_\perp(Y)[j^*\sigma_1]$, where $Rad(Y)=\mathbb{C}\{\beta, \gamma\}$ with
    $$\gamma:={2} \sigma_{(5,5,4)} - \sigma_{(3,2,2)} +
 \sigma_{(3,3,1)} + \sigma_{(4,2,1)} - 2 \sigma_{(4,3,0)} - 3\sigma_{(5,1,1)}+ 2\sigma_{(5,2,0)}\in H^*(Gr(3, 8)).$$
 Then we are done by showing that $Rad(Y)$ is semisimple. Indeed, we notice that
  $j^*\gamma$ is not nilpotent, as $\int_Yj^*\gamma=6\neq 0$.
  Since $Rad(Y)$ is two-dimensional,
the nilpotent elements of $Rad(Y)$ form a subspace of dimension at most $1$ and any nilpotent element has nilpotent index no more than $2$.
If $a\beta+bj^*\gamma$ is nilpotent,
then so is $-a\beta+bj^*\gamma$ by applying the monodromy action by Proposition \ref{prop: monodromy} and Lemma \ref{lemma: monobeta}.
This forces $b=0$. Then it follows from
 $\int_Y \beta^2\neq 0$ that  $a=0$.
\end{proof}

\subsection{Proof of Lemma  \ref{lem:comblemm}}\label{section:lemmaproof}
 We discuss all the possible $n-i$.

\subsection*{Case I: $n-i\in \{1, 2\}$.}
If $n-i=1$, only the empty partition is a $1$-core partition, but our assumption implies $k(n-k)\geq k^2>k\geq i$. Thus it is impossible.

If $n-i=2$, then the $2$-core partition must be a staircase. The maximal $2$-core partition in  $\mathcal{P}_{k, n}$ is $(k, \cdots, 2, 1)$ has $\frac{k(k+1)}{2}$ cells.
By the  assumption, we have
 $$\frac{k(k+1)}{2}\geq k(n-k)-i.$$
Hence,   $\frac{k(k+1)}{2}>k(n-k)-n$, implying
$\frac{k(k+1)}{2}+k^2>(k-1)n>2(k-1)k$ and consequently   $k<5$. Combining the above inequality with $3\leq k\leq {n\over 2}$, we obtain  $(k,n,i)\in\{(3,6,4),(4,8,6)\}$, see the first two diagrams in Example \ref{ex:kni}.
\bigbreak

To proceed in the remaining cases, we use the  bijection $\mathcal{P}_{k, n}\to {[n]\choose k}$ defined by $\lambda\mapsto A=\{a_1,\ldots,a_k\}$ with   $a_i=\lambda_i+k-i+1$ for all $i$. Under this identification, we have
\begin{enumerate}[label=(\roman*)]
    \item\label{cond1}
  $|\lambda|\geq k(n-k)-i \Longleftrightarrow \sum_{i=1}^ka_i \geq n+(n-1)+\cdots+(n-k+1)-i$;
    \item\label{cond2}
  $\lambda$ is $(n-i)$-core $\Longleftrightarrow$
    if $a\in A$ and $ a-(n-i)\geq 1$ then $a-(n-i)\in A$.
\end{enumerate}
Here the equivalence in \ref{cond2} follows from the fact that (mult-)set of hook lengths of $\lambda$ is given by $
 \bigcup_{i=1}^k
\left(
\{1,\ldots,a_i\}
\setminus \bigcup_{j>i}\{a_i-a_j\}
\right)$, see \cite[proof of Lemma 7.21.1]{St99}.

\subsection*{Case II: $n-i\geq 3$ and   $i<n-i$}
Let us consider
$$B=\{a\in A\mid n-i<a\leq n\}.$$
Since $n-i<n<2(n-i)$, {we  have the disjoint union }
$$A=B\cup B'\cup C\qquad  $$
where $B'=\{b-(n-i)\mid b\in B\}$ and $C=A\setminus (B\cup B')$.
Assume $|B|=b$. Note that
$$k=|B|+|B'|+|C|\geq 2b,\qquad k-b=|B'|+|C|\leq n-i$$
and
$$
\texttt{\#}\big(A\cap \{1,\ldots,i\}\big)\geq b,\qquad
\texttt{\#}\big(A\cap \{1,\ldots,n-i\}\big)\geq
k-b.
$$
That is,
$$\begin{array}{c}
a_b<\cdots<a_2<a_1\leq n\\
a_{k-b}<\cdots<a_{b+2}<a_{b+1}\leq n-i\\
a_{k}<\cdots<a_{k-b}<a_{k-b+1}\leq i.
\end{array}$$
This implies
\begin{align*}
a_1+\cdots+a_k
&\leq n+(n-1)+\cdots+(n-b+1)\\
&\qquad + (n-i)+(n-i-1)+\cdots+(n-i+1-k+2b)\\
&\qquad \qquad +i+(i-1)+\cdots+(i+1-b)\\
& =n+(n-1)+\cdots+(n-k+1)-(k-2b)(i-b)-b(n-k+b-i).
\end{align*}
By the assumption \ref{cond1}, we have
\begin{equation}\label{eq:eqfrom(i)1}
0\geq (k-2b)(i-b)+b(n-k+b-i)-i,
\end{equation}
implying $b>0$ (otherwise we would have $0\geq ki-i>0$).
If $n-k+b-i\leq 1$, then we have
$k> k-b\geq n-i-1>\tfrac{n}{2}-1$,
contradicting to $k\leq \tfrac{n}{2}$.
Thus we  can assume $n-k+b-i\geq 2$.

If $k-2b>0$, then
$\eqref{eq:eqfrom(i)1}\geq (i-b)+2b-i=b>0$,
a contradiction.
It follows that $k=2b$.
Now
$\eqref{eq:eqfrom(i)1}=b(n-b)-(b+1)i$.
Since $b=\frac{k}{2}\leq \frac{n}{4}$, $b(n-b)\geq \frac{3n^2}{16}$.
Since $i< \frac{n}{2}$, $(b+1)i<(\frac{n}{4}+1)\frac{n}{2}$.
So
$$\eqref{eq:eqfrom(i)1}> \frac{3n^2}{16}-\big(\frac{n}{4}+1\big)\frac{n}{2}=\frac{n(n-8)}{16}.$$
This implies $n<8$.
Hence $k\leq n/2<4$, so $k=3$ by our assumption. This contradicts $k=2b$.

\subsection*{Case III:  $n-i\geq 3$ and   $i\geq n-i$}
Let us consider
$$M=\{a\in A\mid i<a\leq n\},\qquad m=|M|.$$
By \ref{cond2}, we have an injective map
$\phi:M\to \{1,\ldots,n-i\}\cap A$
such that $\phi(a)= a-\ell(n-i)$ for some $\ell\geq 0$.
Moreover since $i\geq n-i$,
we have $a-(n-i)\geq a-i>0$,
hence $\ell\geq 1$ and in particular $a\neq \phi(a)$.
Now $M\cap \phi(M)=\varnothing$,  we can conclude that
$$k\geq 2m,\qquad \texttt{\#}(A\cap \{1,\ldots,n-i\})\geq m.$$

\subsection*{Subcase III.1}
When $|M|=1$, i.e. $M=\{a_1\}$,
we have
$$a_k\leq n-i,\qquad a_{k-1}<\cdots <a_3<a_2\leq i,\qquad a_1\leq n.$$
Then
\begin{align*}
a_1+\cdots+a_k
& \leq n+i+(i-1)+\cdots+(i-k+3)+(n-i)\\
& = n+(n-1)+(n-2)+\cdots+(n-k+1)-(k-2)(n-i-1)-(i-k+1).
\end{align*}
By assumption \ref{cond1} and noting $n-i\geq 3$, we have
$$0\geq (k-2)(n-i-1)-k+1\geq 2(k-2)-k+1=k-3\geq 0.$$
So $k=3$, and all equalities should be achieved at each step, i.e.
$$n-i=3,\qquad a_3=n-i,\qquad a_2=i,\qquad a_1=n.$$
Then $a_3=3$.
Since $a_2>a_3=n-i$, by \ref{cond2}, this forces $a_2-(n-i)=a_3$, i.e. $i=6$ and $n=9$. So the only case is  $(k,n,i)=(3,9,6)$, see the third diagram in Example \ref{ex:kni}.

\subsection*{Subcase III.2}
When $|M|=m\geq 2$.
We have
$$a_{k}<a_{k-1}<\cdots<a_{k-m+1}\leq n-i\qquad\mbox{and}\qquad
a_{k-m}<\cdots <a_{m+2}<a_{m+1}\leq i.$$
Then
\begin{align*}
a_1+\cdots+a_k
& \leq n+(n-1)+\cdots+(n-m+1)\\
&\qquad  +i+(i-1)+\cdots+(i-(k-2m)+1)\\
&\qquad \qquad +(n-i)+(n-i-1)+\cdots+(n-i-m+1)\\
& = n+(n-1)+\cdots+(n-k+1)
  -(k-2m)(n-m-i)-m(i+m-k).
\end{align*}
By \ref{cond1}, we have
\begin{equation}\label{eq:eqfrom(i)2}
0\geq (k-2m)(n-m-i)+m(i+m-k)-i
\end{equation}
If $n-m-i=0$, then
$$\eqref{eq:eqfrom(i)2}
=m(n-k)-i\geq 2(n-k)-i\geq n-i>0,$$
 a contradiction.
So $n-m-i>0$, and
$$\eqref{eq:eqfrom(i)2}
\geq (k-2m)+2(i+m-k)-i=i-k\geq n/2-n/2\geq 0.$$
 
This shows $i=k$ and all equalities should be achieved at each step, i.e.
Then $i=k=n/2$,
$m=2$ (since $i+m-k=m>0$) and
$k-2m=0$ (since if $n-m-i=i-2=1$,
we can solve $(k,n,i)=(3,6,3)$ contradicting to $k\geq 2m$).
So the only possibility is $(k,n,i)=(4,8,4)$ with $a_1=n=8$, $a_2=a_1-1=7$, $a_3=n-i=4$ and $a_4=a_3-1=3$, see the last diagram in Example \ref{ex:kni}.
Now the proof of Lemma \ref{lem:comblemm} is completed.

\section{Smooth hyperplane sections of (co)adjoint Grassmannians}
In this section,  we consider a (co)adjoint Grassmannian $X=G/P_k$, as shown in Table \ref{tab: coadj}.
There is an embedding $X\hookrightarrow \mathbb{P}(V_{\omega_k})$, where $\omega_k$ is the $k$-th fundamental weight and  $V_{\omega_k}$ denotes the corresponding fundamental representation of $G$.
Let $Y$ be a smooth hyperplane section of $X$, given by the intersection of a general hyperplane in $\mathbb{P}(V_{\omega_k})$ with $X$.

\subsection{Hyperplane sections of $C_n/P_2$} The coadjoint  Grassmannian $G/P_2$ of type $C_n$ is the symplectic Grassmannian $SG(2, 2n)=\{V\leq \mathbb{C}^{2n}\mid \dim V=2,\, \omega(V, V)=0\}$, where   $\omega$ is a symplectic form on $\mathbb{C}^{2n}$ and $n\geq 3$.  It can be realized as a smooth hyperplane section of $Gr(2, 2n)$. In particular,   $r_X=2n-1$ and $\dim X=4n-5$.  
The following theorem verifies  \cite[Conjecture 1.10 (2)]{BePe}  for type $C$, and provides a new proof of the non-semisimplicity of $QH^*(SG(2, 2n))$ as well.

\begin{thm}\label{thm: type C} Let $n\geq 3$.
    For a smooth hyperplane section $Y$ of $X=SG(2, 2n)$, both $QH^*(X)$ and $QH^*(Y)$ are non-semisimple.
\end{thm}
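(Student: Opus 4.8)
The strategy is to apply Theorem~\ref{mainthm:ss}(2) to both $X=SG(2,2n)$ and its hyperplane section $Y$: in each case it suffices to exhibit a single residue class $\bar\imath$ modulo the Fano index at which the index-periodic even Betti numbers fail the symmetry $\tilde{b}_{\bar\imath}=\tilde{b}_{-\bar\imath}$. Thus the proof reduces to Betti-number bookkeeping, and the only inputs needed are the Poincar\'e polynomial of $X$, the Fano indices $r_X=2n-1$ and $r_Y$, and the Betti numbers of $Y$ --- which, by the Lefschetz hyperplane theorem together with Poincar\'e duality, are determined by those of $X$ in every cohomological degree except the middle one.

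For $X$, I would first record $P_X(t)=\sum_i b_{2i}(X)\,t^i$. The partial isotropic flag variety $Fl=\{(\ell,V):\ell\subset V,\ \dim\ell=1,\ \dim V=2,\ \omega|_V=0\}$ is simultaneously a $\mathbb{P}^1$-bundle over $X$ (sending $(\ell,V)\mapsto V$) and a $\mathbb{P}^{2n-3}$-bundle over $\mathbb{P}^{2n-1}$ (sending $(\ell,V)\mapsto\ell$; every line is $\omega$-isotropic, and the fibre over $\ell$ is $\mathbb{P}(\ell^\perp/\ell)$). Hence
\[
P_X(t)=\frac{(1+t+\cdots+t^{2n-1})(1+t+\cdots+t^{2n-3})}{1+t}=(1+t+\cdots+t^{2n-1})(1+t^2+t^4+\cdots+t^{2n-4}),
\]
or one may simply quote the Betti numbers of $IG(2,2n)$ from the literature. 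Since $\dim X=4n-5=2(2n-1)-3$, each class modulo $r_X=2n-1$ is carried by one or two cohomological degrees, and a short count gives $\tilde{b}_{\bar 2}(X)=b_4(X)+b_{4n+2}(X)=2+(n-2)=n$, while $-\bar 2\equiv\overline{2n-3}$ is carried only by the degree $2(2n-3)$, so $\tilde{b}_{\overline{2n-3}}(X)=b_{4n-6}(X)=n-1$. As $n\neq n-1$, Theorem~\ref{mainthm:ss}(2) shows $QH^*(X)$ is not semisimple, which is the announced new proof of the (known) non-semisimplicity of $QH^*(SG(2,2n))$.

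For $Y$, adjunction gives $-K_Y=(c_1(X)-h_X)|_Y=(2n-2)h_Y$, and since $\dim Y=4n-6\ge 6$ the Lefschetz theorem forces $\mathrm{Pic}(Y)=\mathbb{Z}h_Y$, hence $r_Y=2n-2$. The Lefschetz hyperplane theorem and Poincar\'e duality determine $b_{2i}(Y)=b_{2i}(X)$ for $i<2n-3$ and $b_{2i}(Y)=b_{2i+2}(X)$ for $i>2n-3$, while in the middle degree $b_{4n-6}(Y)=b_{4n-6}(X)+p$ with $p:=\dim H^{4n-6}_{\rm prim}(Y)>0$: the hyperplane section carries non-trivial vanishing cohomology, and this positivity --- indeed the exact value of $p$ --- can be extracted from Benedetti--Perrin's description of the Hodge--Tate cohomology of such $Y$ \cite{BePe}, or computed directly by the $\chi_y$-genus method used in the proof of Theorem~\ref{mainthm:HodgehyperplaneGr}, or by an Euler-characteristic count on the Lefschetz pencil $\{Gr(2,2n)\cap H_\lambda\}_{\lambda\in\mathbb{P}^1}$ whose generic member is isomorphic to $X$ and whose base locus is $Y$. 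Working modulo $r_Y=2n-2$ and using $4n-6=2(2n-2)-2$, a short count gives $\tilde{b}_{\bar 1}(Y)=b_2(Y)+b_{4n-2}(Y)=1+(n-2)=n-1$, whereas $-\bar 1\equiv\overline{2n-3}$ is carried only by the middle degree, so $\tilde{b}_{\overline{2n-3}}(Y)=b_{4n-6}(Y)=n-1+p>n-1$. Theorem~\ref{mainthm:ss}(2) then shows $QH^*(Y)$ is not semisimple (note this is consistent with $H^*(Y)$ being Hodge--Tate in the sense of Proposition~\ref{prop:hodge}).

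The main obstacle is clear from the outline: every Betti number of $Y$ except the one in the middle degree is forced mechanically by Lefschetz duality, and the single non-formal input is $b_{4n-6}(Y)$ --- equivalently the dimension of the vanishing cohomology of a general hyperplane section of $SG(2,2n)$ --- on whose positivity the whole argument rests. I would settle it by appealing to the Hodge diamond computed by Benedetti--Perrin, keeping the $\chi_y$-genus computation and the Lefschetz-pencil Euler-characteristic count as self-contained fall-backs.
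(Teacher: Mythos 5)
Your proposal is correct and follows essentially the same route as the paper: apply Theorem~\ref{mainthm:ss}(2) to $X$ via the residue class $\bar 2 \bmod (2n-1)$, using the Poincar\'e polynomial computed from the $\mathbb{P}^1$- and $\mathbb{P}^{2n-3}$-bundle structures on the two-step isotropic flag, and then to $Y$ via the residue class $\bar 1 \bmod (2n-2)$. The one place the arguments diverge is in how the middle Betti number $b_{4n-6}(Y)$ is controlled: the paper pins it down exactly (to $2n-2$) by noting, via Benedetti--Perrin (\cite[Thm.~1.2, Lem.~3.1]{BePe}), that $Y$ inherits the same torus-fixed points as $X$ and hence the same Euler number, whereas you observe that the strictly weaker input $\dim H^{4n-6}_{\rm prim}(Y)>0$ already forces $\tilde b_{-\bar1}(Y)>\tilde b_{\bar1}(Y)$, and you would supply that positivity from BePe's Hodge diamond, a $\chi_y$-genus computation, or a Lefschetz-pencil Euler count --- all of which are legitimate and, in the last case, morally the same Euler-characteristic bookkeeping the paper performs.
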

\begin{proof}
Note that the odd Betti numbers $b_{2j+1}(X)=\dim H^{2j+1}(X)$ all vanish.
Since the two-step flag variety $SFl(1,2; 2n)$ of type $C$ is a $\mathbb{P}^1$-bundle over $SG(2,2n)$, and also a $\mathbb{P}^{2n-3}$-bundle over $\mathbb{P}^{2n-1}$, we have the  Poincar\'e polynomial (with respect to complex degree)
\begin{align*}
p_X(t)
& =
\frac{1-t^{2n}}{1-t}
\frac{1-t^{2n-2}}{1-t}\bigg/(1+t)\\
& = \frac{(1-t^{2n})(1-t^{2n-2})}{(1-t)(1-t^2)}
 = (1+t+\cdots +t^{2n-1})
(1+t^2+\cdots +t^{2n-4}).
\end{align*}
 
Thus for $0\leq i\leq 2n-3$, we have  $b_{2i}(X)
=\begin{cases}
\frac{i+2}{2}, & \mbox{if } i \text{ is even},\\
\frac{i+1}{2}, & \mbox{if } i \text{ is odd}.
\end{cases}$

Since $r_X=2n-1$ and $\dim X=4n-5$, we have $\dim \mathcal{A}^{-\bar 2}(X)=b_{2(2n-3)}(X)={2n-3+1\over 2}=n-1$ and $\dim \mathcal{A}^{\bar 2}(X)=b_{2\cdot 2}(X)+b_{2(2n+1)}(X)=b_{2\cdot 2}(X)+b_{2(2n-6)}(X)={2+2\over 2}+{2n-6+2\over 2}=n\neq \dim \mathcal{A}^{-\bar 2}$. Hence, $QH^*(X)$ is not semisimple by Theorem \ref{mainthm:ss} (2).

 Note $r_Y=2n-2$ and $\dim Y=4n-6$. By \cite[Theorem 1.2 and Lemma 3.1]{BePe}, $Y$ is invariant under the natural action of a maximal torus of $G$, and its torus-fixed loci coincide with the torus-fixed loci of $X$. Therefore $X$ and $Y$ have the same Euler number.  It follows that
  $b_{2(2n-3)}(Y)=b_{2(2n-3)}(X)+b_{2(2n-2)}(X)=2n-2$ and $b_{2i}(Y)=b_{2i}(X)$ for $0\leq i\leq 2n-4$. Thus
  $\dim \mathcal{A}^{\bar 1}(Y)=b_{2}(Y)+b_{2(2n-1)}(Y)=b_{2}(Y)+b_{2(2n-5)}(Y)=1+{2n-5+1\over 2}=n-1\neq b_{2(2n-3)}(Y)=\dim \mathcal{A}^{-\bar 1}$. Hence, $QH^*(Y)$ is not semisimple by Theorem \ref{mainthm:ss} (2).
\end{proof}
\begin{remark}
   The same argument  also works for $Y$ in coadjoint Grassmannians in  other  non-simply-laced cases, i.e. of type
$B_n/P_1, F_4/P_4$, or $G_2/P_1$
\end{remark}

\subsection{Hyperplane sections of $D_n/P_2$}
For $n\geq 4$, the (co)adjoint variety $D_n/P_2$  is the Grassmannian of isotropic planes in $\mathbb{C}^{2n}$ with respect to a non-degenerate symmetric bilinear form.
The  argument for type $C$ case  does not  work for type $D$ case.
Here we use the monodromy technique, which works not only in type $D$ case.

Let  $G/P$ denote a coadjoint Grassmannian of type $D$ or $E$.
Consider  the  natural projection
$$
\pi:\mathcal{Y}:=\{(gP,v)\in G/P\times \mathfrak{g}\mid v\in gL\}
\longrightarrow \mathfrak{g}$$
where $L\subset \mathfrak{g}$ is the span of root vectors except the lowest root space.
Denote by $\kappa$ the Killing form of $\mathfrak{g}$.
Then the fiber $\mathcal{Y}_x$ at $x\in \mathfrak{g}$ can be identified with the hyperplane section defined by the linear equation $\kappa(x,-)$ over $\mathfrak{g}$ under the Pl\"ucker embedding $G/P\hookrightarrow \mathbb{P}(\mathfrak{g})$.
Moreover, for any $x$ in the set $U=\mathfrak{g}^{rs}$ of regular semisimple elements of $\mathfrak{g}$, the fiber $\mathcal{Y}_x$ is smooth, being a smooth hyperplane section $Y$ of $X$.
It is well-known that
the fundamental group of $\mathfrak{g}^{rs}$ is
the braid group of the Weyl group $W$ of $G$.

\begin{lemma}
The monodromy action factors through the Weyl group $W$, and as a $W$-representation,
$$ H^*(Y)= H^*(Y)_{\rm inv}\oplus H^*(Y)_{\rm std}$$
with $H^*(Y)_{\rm std}\cong \chi_{\rm std}$
the standard (i.e. reflection) representation of $W$.
\end{lemma}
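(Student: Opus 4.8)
The strategy is to peel off the part of $H^*(Y)$ coming from $X$, on which the monodromy is trivial, and to recognise the complementary primitive part as a Springer representation.

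First we record the splitting. The other projection $\mathcal Y\to G/P$ is a vector bundle --- its fibre over $gP$ is the codimension-one linear subspace $gL\subset\mathfrak g$ --- so pullback gives $H^*(\mathcal Y)\cong H^*(G/P)=H^*(X)$, and the composite $Y=\mathcal Y_x\hookrightarrow\mathcal Y\to G/P$ is the inclusion $j\colon Y\hookrightarrow X$. By Deligne's invariant cycle theorem (used already for $Gr(3,8)$ in the proof of Lemma \ref{lemma: monobeta}), the $\pi_1(U,x)$-invariants of $H^*(Y)$ equal the image of $H^*(\mathcal Y)=H^*(X)\to H^*(Y)$, i.e. the ambient subalgebra $H^*(Y)_{\rm amb}$. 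As the monodromy preserves the Poincar\'e pairing it preserves the orthogonal complement, so $H^*(Y)=H^*(Y)_{\rm amb}\oplus H_{\rm prim}(Y)$ as $\pi_1(U)$-modules with trivial action on the first factor; moreover $H_{\rm prim}(Y)$ is concentrated in the middle degree. It therefore remains to show that the monodromy on $V:=H_{\rm prim}(Y)$ factors through $W$ and is isomorphic to $\chi_{\rm std}$ (note $V^W=0$, consistent with $H^*(Y)_{\rm inv}=H^*(Y)_{\rm amb}$).

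For the action on $V$ we would analyse the proper map $\pi\colon\mathcal Y\to\mathfrak g$ by the decomposition theorem. The summands of $R\pi_*\mathbb Q_{\mathcal Y}$ accounting for $H^*(\mathcal Y)=H^*(X)$ are shifted constant sheaves; apart from summands supported on proper closed subsets of $\mathfrak g$, the only other one is the intersection-cohomology extension $IC(\mathfrak g,\mathcal L)$ of the local system $\mathcal L$ on the smooth locus of $\pi$ whose stalk at $x$ is $H_{\rm prim}(\mathcal Y_x)$, and $\mathcal L|_{\mathfrak g^{rs}}$ carries exactly the monodromy we want. Since $\mathcal Y$ is the universal (affine) hyperplane section of the adjoint variety $X=\mathbb P(\mathcal O_{\min})$, this is the setting where projective duality meets Springer theory: the locus of singular fibres is the cone over the projective dual $X^\vee$, and via the Fourier transform on $\mathfrak g\cong\mathfrak g^*$ one identifies $IC(\mathfrak g,\mathcal L)$ with a summand of the Grothendieck--Springer sheaf $R(\tilde{\mathfrak g}\to\mathfrak g)_*\mathbb Q[\dim\mathfrak g]$. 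Each summand of the latter restricts on $\mathfrak g^{rs}$ to a \emph{Springer local system}, which by construction factors through the deck group $W$ of $\tilde{\mathfrak g}^{rs}\to\mathfrak g^{rs}$ --- this gives the first assertion. Which summand occurs is then pinned down by a stalk computation (along the nilpotent cone, or equivalently via the contact locus of a generic tangent hyperplane of the adjoint variety): for the adjoint variety one finds the subregular orbit with trivial coefficients, and by the Springer correspondence the associated $W$-module, realised on the top cohomology of the subregular Springer fibre, is the reflection representation $\chi_{\rm std}$.

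Two auxiliary computations sharpen and confirm this. First, $Y$ may be taken torus invariant with the same fixed locus as $X$: for $x$ in a Cartan subalgebra the $T$-fixed points of $X=\mathbb P(\mathcal O_{\min})$ are the root lines $[\mathfrak g_\alpha]$, all killed by $\kappa(x,-)$, so $\chi(Y)=\chi(X)$; combined with the Lefschetz and hard Lefschetz structure of $H^*(Y)$ this yields $\dim V=b_{\dim_{\mathbb C}Y}(X)$, which in each of the cases $D_n/P_2$ and the coadjoint Grassmannians of type $E$ equals $\operatorname{rank}(G)=\dim\chi_{\rm std}$. Second, the $\operatorname{rank}(G)$ irreducible components of the subregular Springer fibre correspond to that many vanishing cycles, which then form a basis of $V$ on which the simple reflections act by Picard--Lefschetz reflections in the individual cycles, forcing the $W$-action to be the reflection representation. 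The crux, we expect, is the sheaf-theoretic identification in the previous paragraph --- both the bookkeeping in the decomposition theorem and, above all, verifying that the Springer representation which appears is precisely $\chi_{\rm std}$ rather than another small $W$-module or a twist by a linear character; the naive Picard--Lefschetz analysis is inconclusive by itself because the dual variety of an adjoint variety is singular and its contact locus jumps along the subregular stratum.
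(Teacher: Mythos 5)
Your high-level strategy — apply the decomposition theorem to $\pi$, recognize that the full-support summands are Springer local systems on $\mathfrak{g}^{rs}$ (so the monodromy factors through $W$), then pin down which $W$-module appears — is the same as the paper's. The crux, as you yourself flag, is identifying the precise representation, and here your route diverges from the paper's and is left incomplete.

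The paper closes the gap with a construction you do not use. It introduces the ``perpendicular'' family
$$\rho\colon \mathcal{Y}^\perp=\{(gP,v)\in G/P\times\mathfrak{g}\mid v\in\operatorname{span}(ge_\theta)\}\longrightarrow\mathfrak{g},$$
the total space of the tautological line subbundle of $\mathfrak{g}\times G/P$ whose fiber is the Killing-annihilator $L^\perp=\mathbb{C}e_\theta$ of $L$. Its image is $\overline{\mathbb{O}_{\min}}=\mathbb{O}_0\cup\mathbb{O}_{\min}$, so the decomposition theorem for $\rho_*\mathbb{C}_{\mathcal{Y}^\perp}$ is trivial to write down: $IC(\mathbb{O}_{\min})\oplus IC(\mathbb{O}_0)\otimes V$. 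Because $\mathcal{Y}$ and $\mathcal{Y}^\perp$ are complementary subbundles of the trivial bundle $\mathfrak{g}\times G/P$, the Fourier transform intertwines the two pushforwards, $\mathscr{F}(\rho_*\mathbb{C}_{\mathcal{Y}^\perp})=\pi_*\mathbb{C}_{\mathcal{Y}}$ (as in Ginzburg's Claim 8.4). Then Juteau's computation $\mathscr{F}(IC(\mathbb{O}_{\min}))=IC(\mathfrak{g}^{rs},\chi_{\rm std})$ (and $\mathscr{F}(IC(\mathbb{O}_0))=IC(\mathfrak{g}^{rs})$) gives the answer directly: the only full-support constituents of $\pi_*\mathbb{C}_{\mathcal{Y}}$ carry $\chi_{\rm triv}$ and $\chi_{\rm std}$, the latter with multiplicity one. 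No analysis of the singular fibers of $\pi$, the dual variety, or the subregular orbit is needed.

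Your proposal, by contrast, tries to read off the $W$-representation from the singular locus of $\pi$ (the cone over $X^\vee$) and a stalk computation near the subregular orbit, invoking the Grothendieck--Springer resolution $\tilde{\mathfrak{g}}\to\mathfrak{g}$ rather than the small resolution of $\overline{\mathbb{O}_{\min}}$. This is the Fourier-dual picture — the subregular orbit corresponds to $\chi_{\rm std}$ in the other normalization of the Springer correspondence — so it is not inconsistent, but you acknowledge yourself that the Picard--Lefschetz analysis is inconclusive because $X^\vee$ is singular and the contact locus jumps, and you never actually carry out the stalk computation. The initial splitting you set up via Deligne's invariant cycle theorem ($H^*(Y)=H^*(Y)_{\rm amb}\oplus H_{\rm prim}(Y)$ with trivial monodromy on the first factor) is fine, and the dimension check $\dim H_{\rm prim}(Y)=\operatorname{rank}G$ via torus-fixed loci is a genuine consistency check that the paper also uses (in the proof of the subsequent theorem, not of this lemma), but neither substitutes for the identification of the local system. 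In short: the missing ingredient is the explicit resolution $\mathcal{Y}^\perp$ of $\overline{\mathbb{O}_{\min}}$ together with the Fourier transform relation $\mathscr{F}(\rho_*\mathbb{C}_{\mathcal{Y}^\perp})=\pi_*\mathbb{C}_{\mathcal{Y}}$ and Juteau's identity $\mathscr{F}(IC(\mathbb{O}_{\min}))=IC(\mathfrak{g}^{rs},\chi_{\rm std})$; without these (or something equally precise on the dual side), your sketch does not establish which Springer representation appears.
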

\begin{proof}
We will use Springer correspondence in terms of perverse sheaves and Fourier transformation (see e.g. \cite{Gi98}).
Let $\mathbb{C}_Z$ be the constant sheaf of a variety $Z$.
Denote by $IC(Z,\chi)$ the intersection complex of the local system $\chi$ over $Z$ and we omit $\chi$ if $\chi=\mathbb{C}_Z$.
By the Decomposition Theorem \cite{BBD}, we have
\begin{equation}\label{eq:decofpiH}
\pi_*\mathbb{C}_{\mathcal{Y}} = \bigoplus_{\chi}IC(\mathfrak{g}^{rs},\chi)\otimes V_\chi\oplus
(\text{perverse sheaves with smaller support})
\end{equation}
where the sum goes over irreducible representations of $\pi_1(\mathfrak{g}^{rs})$, and $V_{\chi}$ is the multiplicity space of $\chi$ in $H^*(Y)$.
Let us consider the analogy $\rho$ of Springer resolutions
$$
\rho:\mathcal{Y}^\perp:=\{(gP,v)\in G/P\times \mathfrak{g}\mid v\in \operatorname{span}(ge_\theta)\}
\longrightarrow \mathfrak{g}$$
where $\theta$ is the highest root and $e_\theta$ is the corresponding root vector.
The image of $\rho$ contains only two nilpotent orbits,
the zero orbit $\mathbb{O}_0=\{0\}$ and the minimal $G$-orbit $\mathbb{O}_{\min}=Ge_\theta$.
Since $\rho$ is one-to-one over $\mathbb{O}_{\min}$, by the Decomposition Theorem \cite{BBD} again, we have
$$\rho_*\mathbb{C}_{\mathcal{Y}^\perp}
=IC(\mathbb{O}_{\min})\oplus
IC(\mathbb{O}_0)\otimes V
$$
for some coefficients space $V$.
Let us denote by $\mathscr{F}$ the Fourier transformation between $\mathfrak{g}$ and $\mathfrak{g}^*\simeq \mathfrak{g}$, where the isomorphism is given by the Killing form.
Since $\mathfrak{g}$ is simply-laced,
by the Springer correspondence, we have
$$
\mathscr{F}\big(IC(\mathbb{O}_0)\big) = IC(\mathfrak{g}^{rs}),\qquad
\mathscr{F}\big(IC(\mathbb{O}_{\min})\big) = IC(\mathfrak{g}^{rs},\chi_{\rm std}),
$$
see for example \cite{Jut}.
Since Fourier transformation is functorial, similar as \cite[Claim 8.4]{Gi98}, we have
$\mathscr{F}\big(\rho_*\mathbb{C}_{\mathcal{Y}^\perp}\big)
=\pi_*\mathbb{C}_{\mathcal{Y}}$.
This implies
$$\pi_*\mathbb{C}_{\mathcal{Y}}
=IC(\mathfrak{g}^{rs},\chi_{\rm std})\oplus
IC(\mathfrak{g}^{rs})\otimes V.$$
Comparing with \eqref{eq:decofpiH}, the multiplicity space $V_\chi=0$ unless $\chi=\chi_{\rm std}$ or $\chi_{\rm inv}$ and $\dim V_{\chi}=1$ for $\chi=\chi_{\rm std}$.
Hence, the decomposition in the statement follows.
\end{proof}

\begin{lemma}\label{len:Symstd}
For any finite irreducible Coxeter group $W$ not of type $A$, we have
$$\dim (\operatorname{Sym}^2 \chi_{\rm std})^{W}=1,\qquad
\dim (\operatorname{Sym}^3 \chi_{\rm std})^{W}=0.$$
\end{lemma}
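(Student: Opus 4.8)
The plan is to prove both identities by invariant theory, reducing everything to the well-known structure of the ring of $W$-invariant polynomials on the reflection representation $\chi_{\rm std}$. Recall that $\dim (\operatorname{Sym}^d \chi_{\rm std})^W$ equals the number of degree-$d$ homogeneous $W$-invariant polynomials, counted with multiplicity; equivalently, writing the Hilbert--Poincar\'e series of the invariant ring via Chevalley's theorem as $\prod_{i=1}^{\ell} (1-t^{d_i})^{-1}$ where $d_1 \leq \cdots \leq d_\ell$ are the degrees of the fundamental invariants and $\ell = \dim \chi_{\rm std}$, the quantity $\dim(\operatorname{Sym}^d\chi_{\rm std})^W$ is the coefficient of $t^d$ in that series. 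So the whole lemma is a statement about these degrees: the smallest degree $d_1$ always equals $2$ (the invariant quadratic form, since $\chi_{\rm std}$ is irreducible and orthogonal), so the coefficient of $t^2$ is exactly $1$ provided no other $d_i$ equals $2$ — which holds precisely because $W$ is irreducible, forcing $d_2 \geq 3$. Likewise the coefficient of $t^3$ is the number of $d_i$ equal to $3$, which is $0$ for all irreducible Coxeter groups \emph{not} of type $A$.

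First I would recall Chevalley's theorem and the resulting product formula for the Hilbert series of $(\operatorname{Sym}^\bullet \chi_{\rm std})^W$, together with the classical fact that $\sum d_i = \ell + N$ where $N$ is the number of reflections and $\prod d_i = |W|$; more to the point, I would cite the tables of exponents/degrees for the finite irreducible Coxeter groups (Bourbaki). For the $\operatorname{Sym}^2$ statement: since $\chi_{\rm std}$ is an irreducible real representation preserved by $W$, it carries a (unique up to scalar) invariant symmetric bilinear form, so $\dim(\operatorname{Sym}^2\chi_{\rm std})^W \geq 1$; and since $W$ is irreducible of rank $\ell \geq 2$, we have $d_2 \geq 3 > 2$, so no further invariant quadratic appears and the dimension is exactly $1$. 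For the $\operatorname{Sym}^3$ statement I would go through the list: the only irreducible Coxeter groups with a fundamental invariant in degree $3$ are the type-$A$ groups (where $d_i = 2, 3, \dots, \ell+1$); for every other type ($B/C$, $D$, $E_6, E_7, E_8$, $F_4$, $H_3, H_4$, $I_2(m)$) the degrees are $2$ together with integers $\geq 4$ except possibly $\geq 4$, so $3$ never occurs, hence $\dim(\operatorname{Sym}^3\chi_{\rm std})^W = 0$.

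The cleanest way to present the degree-$3$ vanishing uniformly, rather than type-by-type, is to observe that a nonzero $W$-invariant cubic form on $\chi_{\rm std}$ would make the invariant ring's third graded piece nonzero, i.e. would contribute a degree-$3$ fundamental invariant; but the second-smallest degree $d_2$ of an irreducible Coxeter group is $\geq 3$, and it equals $3$ if and only if $W$ is of type $A_\ell$ ($\ell\geq 2$) or $I_2(3)=A_2$ — this is precisely the classification statement that type $A$ is singled out by having a cubic invariant. So the only genuine content is this one fact about Coxeter exponents, which I would cite. The main (and only) obstacle is that the argument is not self-contained: it rests on the classification of finite Coxeter groups and their invariant degrees. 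If a more hands-on proof is wanted for the specific $W$ relevant here (types $D_n$ and $E_{6,7,8}$), one can instead exhibit the invariant quadratic explicitly as $\sum x_i^2$ in the standard coordinates and check directly that there is no $W$-invariant cubic, e.g. because $-1 \in W$ for all these types (it is in $W(D_n)$ for $n$ even, in $W(E_7)$, $W(E_8)$, and in $W(D_n)$ acting on $\chi_{\rm std}$ the relevant sign considerations, plus for $E_6$ and $D_{\mathrm{odd}}$ a short separate check) so every odd-degree invariant vanishes — but the uniform statement still needs the classification, so I would keep the citation-based proof as the main line.
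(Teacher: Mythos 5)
Your proof is correct and takes essentially the same route as the paper: both invoke Chevalley's theorem to identify $(\operatorname{Sym}^\bullet\chi_{\rm std})^W$ with a polynomial algebra on generators of degrees $d_1\le\cdots\le d_\ell$, and then read off the two claims from $d_1=2$ together with the classification fact that $d_2\ge 4$ for every irreducible Coxeter group outside type $A$. The only difference is cosmetic — you spell out the Hilbert-series bookkeeping and sketch an optional alternative using $-1\in W$ — but the mathematical content and the citation of the degree tables are the same.
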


\begin{proof}
By a theorem of Chevalley, for example \cite[Section 3.5]{Hum}, we have an algebra isomorphism to the polynomial ring $\bigoplus_{d\geq 0} (\operatorname{Sym}^d \chi_{\rm std})^{W}\cong \mathbb{Q}[p_1,p_2,\ldots,p_n]$,
with $p_i$ homogeneous of degree $d_i$.
The numbers $d_1\leq d_2\leq \cdots \leq d_n$ are classified, and
the statement follows from the fact $d_1=2$ and $d_2\geq 4$ except type $A$, see \cite[Section 3.7]{Hum}.
\end{proof}

\begin{thm}\label{thm: type D}
    For a smooth hyperplane section $Y$ of a coadjoint Grassmannian $X$ of type $D$, the quantum cohomology $QH^*(Y)$ is non-semisimple.
\end{thm}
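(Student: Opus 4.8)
The plan is to exploit the monodromy $W$-action on $QH^*(Y)$ to put an extra $\mathbb{Z}/2$-grading on it whose ``odd'' part squares into a line, and then to produce a nonzero square-zero element. Write $X=D_n/P_2$ with $n\geq 4$ and $W=W(D_n)$; then $\dim_{\mathbb{C}}X=4n-7$, the Fano index is $r_X=2n-3$, so $r_Y=2n-4$ and $\dim_{\mathbb{C}}Y=4n-8=2r_Y$, and the primitive cohomology $H_{\rm prim}(Y)$ sits in cohomological degree $\dim_{\mathbb{C}}Y$. By the preceding lemma, $H^*(Y)=A_0\oplus A_1$ as $W$-modules, where $A_0:=H^*(Y)_{\rm inv}$ is the (trivial $W$-module of) ambient classes and $A_1:=H^*(Y)_{\rm std}\cong\chi_{\rm std}$ is the reflection representation, so $\dim A_1=n\geq 2$. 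By Proposition \ref{prop: monodromy} this $W$-action extends to $QH^*(Y)$ and commutes with the quantum product, and since $W$ fixes $H^2(Y)$ it fixes $q_Y$; hence for every specialization $q_Y=\eta$ the $\mathbb{C}$-algebra $A:=QH^*(Y)|_{q_Y=\eta}=H^*(Y)$ carries $W$ by algebra automorphisms. I would then show $A$ has a nonzero square-zero element for every $\eta$, giving non-semisimplicity.

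First I would verify that $A=A_0\oplus A_1$ is a $\mathbb{Z}/2$-graded algebra for $\star_\eta$. That $A_0$ is a subalgebra (containing the unit) is immediate since $W$ commutes with $\star_\eta$; that $A_0\star_\eta A_1\subseteq A_1$ follows because $A_0\otimes A_1$ is a sum of copies of $\chi_{\rm std}$, which can only map $W$-equivariantly into the $\chi_{\rm std}$-isotypic summand $A_1$ of $A$. The crux, and the step I expect to be the main obstacle, is $A_1\star_\eta A_1\subseteq A_0$. The symmetric $W$-equivariant pairing $A_1\times A_1\to A$ factors through $\operatorname{Sym}^2\chi_{\rm std}$, and its $A_1$-component is detected by the structure constants $\langle\beta_1,\beta_2,\beta_3^\vee\rangle^Y_d$ with all three classes primitive (for $\beta_3$ primitive, $\beta_3^\vee$ is again primitive, since primitive and ambient classes are Poincar\'e-orthogonal in the middle degree). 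As each primitive class has cohomological degree $\dim_{\mathbb{C}}Y$, dimension counting for genus-zero $3$-point invariants forces $3\dim_{\mathbb{C}}Y=2(\dim_{\mathbb{C}}Y+d\,r_Y)$, i.e. $d=1$ (using the borderline identity $\dim_{\mathbb{C}}Y=2r_Y$; the case $d=0$ is excluded by the vanishing of the triple cup product of three middle-degree classes, and $d\geq 2$ by degree). For $d=1$ this invariant is a symmetric $W$-invariant cubic form on $\chi_{\rm std}$, hence lies in $(\operatorname{Sym}^3\chi_{\rm std})^W$, which is $0$ by Lemma \ref{len:Symstd}; so it vanishes and $A_1\star_\eta A_1\subseteq A_0$.

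Next, since $A_0$ is a trivial $W$-module and $(\operatorname{Sym}^2\chi_{\rm std})^W$ is one-dimensional (Lemma \ref{len:Symstd} again), the bilinear map $A_1\times A_1\to A_0$ is forced to have the form $(\beta,\beta')\mapsto Q(\beta,\beta')\,\xi$ for a single vector $\xi\in A_0$, where $Q$ is the essentially unique $W$-invariant symmetric form on $\chi_{\rm std}$ — for instance the restriction of the Poincar\'e pairing of $Y$. Thus $\beta\star_\eta\beta=Q(\beta)\,\xi$ for all $\beta\in A_1$, so $A_1\star_\eta A_1\subseteq\mathbb{C}\xi$ has dimension at most $1$. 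As $\dim A_1=n\geq 2>1\geq\dim(A_1\star_\eta A_1)$, I would conclude exactly as in the proof of Lemma \ref{lemma: key}: the single quadratic equation $Q(\beta)=0$ in $n\geq 2$ unknowns has a nonzero solution $\beta_0\in A_1$, and then $\beta_0\star_\eta\beta_0=0$ with $\beta_0\neq 0$. Since $\eta$ was arbitrary, $QH^*(Y)$ is non-semisimple at every specialization of $q_Y$, in particular not generically semisimple. Finally I would remark that the identical argument covers the coadjoint Grassmannians of type $E$, replacing $W(D_n)$ and $\chi_{\rm std}$ by the relevant Weyl group and its reflection representation.
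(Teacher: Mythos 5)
Your proof is correct, and it makes the same core moves as the paper (the monodromy $W$-action extending to $QH^*(Y)$, the decomposition $H^*(Y)=H^*(Y)_{\rm inv}\oplus H^*(Y)_{\rm std}$ with $H^*(Y)_{\rm std}\cong\chi_{\rm std}$, and Lemma~\ref{len:Symstd}). However, the way you close the argument is genuinely different and arguably cleaner than the paper's. The paper proves the two closure relations $A_1\star A_1\subseteq A_0$ and $A_0\star A_1\subseteq A_1$, and then builds a hybrid $\mathbb{Z}_2$-graded subalgebra $\mathcal{B}$ by interleaving the index-periodicity grading (specifically $\mathcal{A}^{\bar 0}\oplus\mathcal{A}^{\overline{r_Y/2}}$, using that $r_Y$ is even) with the monodromy grading, placing $H^{2r_Y}(Y)_{\rm std}$ in the odd piece; it then compares $\dim\mathcal{B}^{\overline{0}}=n+2$ with $\dim\mathcal{B}^{\overline{1}}=2\dim H^{r_Y}(Y)+n$ and invokes Lemma~\ref{lemma: key}, which additionally requires the (true but extra) fact that $\dim H^{r_Y}(Y)>1$. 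You instead exploit the $1$-dimensionality of $(\operatorname{Sym}^2\chi_{\rm std})^W$ more directly: since $A_0$ is a trivial $W$-module, the $W$-equivariant symmetric map $\operatorname{Sym}^2 A_1\to A_0$ factors through the one-dimensional space of $W$-coinvariants, so $A_1\star_\eta A_1\subseteq\mathbb{C}\xi$ and $\beta\star_\eta\beta=Q(\beta)\xi$ for the (nondegenerate, unique up to scalar) $W$-invariant form $Q$; taking any nonzero isotropic $\beta_0$ for $Q$ in $\dim A_1=n\geq 2$ variables gives the square-zero element. This buys you economy — no dimension table, no need for $r_Y$ even, and the only cohomological input is $\operatorname{rank} G\geq 2$ — and it makes the type-$E$ extension equally transparent. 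Two small notes: (i) in your case-analysis for $A_1\star A_1\subseteq A_0$, the restriction to degree $d=1$ is not actually necessary — the specialized structure constant $(\beta_1\star_\eta\beta_2,\beta_3)_Y$ is, for any $\eta$, a single $W$-invariant symmetric cubic on $\chi_{\rm std}$ and vanishes wholesale by Lemma~\ref{len:Symstd}, which is precisely how the paper phrases it; (ii) you don't literally apply Lemma~\ref{lemma: key} (your $A_1\oplus\mathbb{C}\xi$ is not a graded subalgebra), but you are correctly reusing its proof mechanism (a single degree-$2$ homogeneous equation in $\geq 2$ complex variables has a nonzero root), so the logic is sound.
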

\begin{proof}
By Proposition \ref{prop: monodromy}, the symmetric cubic form
$$\big( \gamma_1\star_{q_Y=1}\gamma_2,\gamma_3\big)_{Y}\in \mathbb{C},\qquad \gamma_1,\gamma_2,\gamma_3\in H^\bullet(Y)$$
is $W$-invariant.
By Lemma \ref{len:Symstd}, when $\gamma_1,\gamma_2,\gamma_3\in H^*(Y)_{\rm std}$, we have
$( \gamma_1\star_{q_Y=1}\gamma_2,\gamma_3)_{Y} = 0$.
Since the Poincar\'e pairing is non-degenerate and $W$-invariant,
so is its restriction to $H^*(Y)_{\rm std}$ by Lemma \ref{len:Symstd}.
So we have $\gamma_1\star_{q_Y=1}\gamma_2\in H^*(Y)_{\rm inv}$.
 
If $\gamma_1,\gamma_2\in H^*(Y)_{\rm inv}$ and $\gamma_3\in H^*(Y)_{\rm std}$, then we have $(\gamma_1\star_{q_Y=1} \gamma_3,\gamma_2)_Y=(\gamma_1\star_{q_Y=1} \gamma_2,\gamma_3)_Y=0$.
Again, since the Poincar\'e pairing is non-degenerate and $W$-invariant, so is its restriction to $H^*(Y)_{\rm inv}$.
So we have $\gamma_1\star_{q_Y=1} \gamma_3\in H^*(Y)_{\rm std}$.

Notice that $\dim Y=2r_Y$, and $r_Y$ is even. Moreover,
$$H^{2i}(Y) = H^{2i}(Y)_{\rm inv} \text{ unless }i=r_Y.$$
By the discussion above, we can construct a subalgebra $\mathcal{B}$ of $\mathcal{A}$ with a $\mathbb{Z}_2$-grading by
\begin{align*}
\mathcal{B}^{\overline{0}}
&  
= H^0(Y)\oplus H^{2r_Y}(Y)_{\rm inv}\oplus H^{4r_Y}(Y),\\
\mathcal{B}^{\overline{1}}
& 
= H^{r_Y}(Y)\oplus H^{3r_Y}(Y)\oplus H^{2r_Y}(Y)_{\rm std}.
\end{align*}

Since
\begin{align*}
\dim H^{2r_Y}(Y)_{\rm std}
= \dim H^{2r_Y}(Y)_{\rm inv} & =\operatorname{rank}G,\\
\dim H^{0}(Y)
= \dim H^{4r_Y}(Y) & =1,\\
\dim H^{r_Y}(Y)=\dim H^{3r_Y}(Y) & >1,
\end{align*}
we have $\dim\mathcal{B}^{\overline{0}}
<\dim\mathcal{B}^{\overline{1}}$.
Hence, $\mathcal{B}$ contains a nonzero nilpotent element by Lemma \ref{lemma: key}.
\end{proof}

\makeatletter
\@setaddresses
\def\@setaddresses{}
\makeatother

\appendix
\section{Generic non-semisimplicity of small quantum cohomology of Kronecker moduli \\[6pt] by Pieter Belmans and Markus Reineke}\label{sec:appendix}
In this appendix,
we explain how the semisimplicity obstruction
from Theorem~\ref{mainthm:ss}
is strong enough to show that certain Kronecker quiver moduli
have non-semisimple small quantum cohomology.
Quiver moduli are moduli spaces of (semi)stable representations of a quiver,
and they admit many strong results,
see \cite{MR2484736} for a survey.
We will focus on the case where the quiver is the~$m$-Kronecker quiver with~$m$ arrows:
\begin{equation}
  \begin{tikzpicture}[baseline=(current bounding box.center), vertex/.style={circle, draw, inner sep=1.6pt}]
    \node[vertex] (v1) at (0,0) {};
    \node[vertex] (v2) at (2,0) {};
    \draw[->] (v1) edge [bend left]  (v2);
    \draw[->] (v1) edge [bend right] (v2);
    \node at (1,.1) {$\vdots$};
  \end{tikzpicture}\,,
\end{equation}
where we will throughout assume that~$m\geq 3$.
We will write~$(d,e)$ for the dimension vector,
where~$d,e\geq 1$.
We will assume that~$\gcd(d,e)=1$.
As there are only~2~vertices,
there is a unique non-trivial stability function,
for which stability coincides with semistability.
We denote the moduli space of (semi)stable representations of the~$m$-Kronecker quiver
with dimension vector~$(d,e)$
by~$\kronecker{m}{d}{e}$.

The important properties for us are that
\begin{enumerate}
  \item $\kronecker{m}{1}{e}\cong\Gr(e,m)$ and~$\kronecker{m}{d}{1}\cong\Gr(d,m)$;
  \item $\kronecker{m}{d}{e}$ is Hodge--Tate \cite[Theorem~3]{MR1324213};
  \item $\kronecker{m}{d}{e}$ is a smooth projective rational Fano variety
    of dimension~$mde-d^2-e^2+1$,
    Picard rank~1,
    and index~$m$,
    see \cite[Corollary~5.2]{MR4352662}.
\end{enumerate}

In addition to generic semisimplicity of small quantum cohomology of
Kronecker moduli which happen to be Grassmannians,
we have the following important result \cite[Theorem~5.23]{2412.15987v1},
which follows from an explicit presentation of the small quantum cohomology.
\begin{proposition}[Meng]
  \label{proposition:meng}
  The small quantum cohomology~$\QH(\kronecker{3}{2}{3})$ is generically semisimple.
\end{proposition}
Given that a version of Schofield's conjecture predicts that
the derived category of a quiver moduli space
(and thus~$\kronecker{m}{d}{e}$ in particular)
admits a full exceptional collection \cite{MR4954467},
and Dubrovin's conjecture then predicts that its big quantum cohomology
is generically semisimple,
one might (optimistically) wonder whether the small quantum cohomology is already semisimple,
as is the case in all known examples.
However,
the following shows that this is \emph{not} the case.
\begin{theorem}
  \label{theorem:non-semisimple}
  Assume that~$m\geq 5$ is odd.
  Then the small quantum cohomology~$\QH(\kronecker{m}{2}{m})$ is not generically semisimple.
\end{theorem}
We will obtain this as a consequence of Theorem~\ref{mainthm:ss},
and thus we can conclude that
this result is also strong enough to show that certain Kronecker moduli spaces
have generically non-semisimple small quantum cohomology,
similar to the application in Theorem~\ref{thm:GrEF}
for exceptional generalized Grassmannians.

\paragraph{Two lemmas}
Given a polynomial with integer coefficients
\begin{equation}
  P(q)=\sum_{i=0}^Na_iq^i\in\mathbb{Z}[q]
\end{equation}
and an integer~$n\geq 1$,
we define
\begin{equation}
  \tilde{b}_j\colonequals\sum_{k\equiv j\bmod n}\!\!a_k
\end{equation}
for $j\in\mathbb{Z}/n\mathbb{Z}$.
Later on,
the polynomial will be the even Poincar\'e polynomial of a smooth projective variety,
and the~$\tilde{b}_j$ will be the index-periodic even Betti numbers.

\begin{lemma}
  \label{lemma:integer-valued}
  Assume that $\tilde{b}_j\leq \tilde{b}_{jk}$ for all $j,k\in\mathbb{Z}/n\mathbb{Z}$.
  Then $P(\ee^{2\pi\ii/n})=\sum_{d|n}\mu(n/d)\tilde{b}_d$,
  in particular, it is an integer.
\end{lemma}

\begin{proof}
  By assumption we have that $\tilde{b}_j=\tilde{b}_{jk}$
  if~$k\in(\mathbb{Z}/n\mathbb{Z})^\times$.
  The orbits of $(\mathbb{Z}/n\mathbb{Z})^\times$ on~$\mathbb{Z}/n\mathbb{Z}$ correspond to the divisors of $n$,
  thus we can write
  \begin{equation}
    P(\ee^{2\pi\ii/n})
    =\sum_{j=1}^n\tilde{b}_j\ee^{2\pi\ii j/n}
    =\sum_{d|n}\tilde{b}_d\sum_{(j,n)=d}\ee^{2\pi\ii j/n}
    =\sum_{d|n}\tilde{b}_d\sum_{(j,n/d)=1}\ee^{2\pi\ii j/(n/d)}
    =\sum_{d|n}\mu(n/d)\tilde{b}_d
  \end{equation}
  where the last step uses standard properties of Ramanujam sums \cite[Theorem 271]{MR2445243},
  and the latter expression is indeed an integer.
\end{proof}

We also recall the $q$-Lucas theorem on evaluations of Gaussian binomial coefficients $\qbinom{a}{b}_q$
at roots of unity \cite[Proposition 2.2]{MR0656008}.
\begin{lemma}
  \label{lemma:q-lucas}
  For $a,b\in\mathbb{N}$ and $0\leq r,s<n$,
  we have
  \begin{equation}
    \qbinom{an+r}{bn+s}_{\ee^{2\pi\ii/n}}
    =
    \binom{a}{b}\cdot\qbinom{r}{s}_{\ee^{2\pi\ii/n}}.
  \end{equation}
\end{lemma}

\paragraph{Poincar\'e polynomials of Kronecker moduli}
Now we consider the Kronecker moduli spaces~$\kronecker{m}{2}{e}$
for $e$ odd and $m,e\geq 3$.
There is a duality
\begin{equation}
  \kronecker{m}{2}{e}
  \cong
  \kronecker{m}{2}{2m-e},
\end{equation}
(with both being empty when~$e>2m$)
see, e.g., \cite[Corollary 4.1]{epiga:14742},
thus we can assume $e\leq m$.

Their Poincar\'e polynomials admit a closed expression using a resolved Harder-Narasimhan type recursion \cite[Section 7]{MR1974891}.
\begin{lemma}
  \label{lemma:harder-narasimhan-recursion}
  Let~$e,m\geq 3$,
  and assume that $e\leq m$, and~$e$ is odd.
  The Poincar\'e polynomial $\poincare{m}{2}{e}(q)$ of $\kronecker{m}{2}{e}$ is given by
  \begin{equation}
    \poincare{m}{2}{e}(q)
    =
    \frac{1}{q(q-1)}\left(\frac{1}{q+1}\qbinom{2m}{e}_q-\sum_{k=0}^{(e-1)/2}q^{(m-e+k)k}\qbinom{m}{k}_q\qbinom{m}{e-k}_q\right).
  \end{equation}
\end{lemma}

We can now derive the main observation.

\begin{lemma}
  \label{lemma:strictly-imaginary}
  Let~$e,m\geq 3$,
  and assume that $e\leq m$, and~$e$ is odd.
  Then $\poincare{m}{2}{e}(\ee^{2\pi\ii/m})$ is strictly imaginary
  if and only if $e=m\geq 5$.
\end{lemma}

\begin{proof}
  If $e<m$,
  then all Gaussian binomial coefficients in Lemma~\ref{lemma:harder-narasimhan-recursion}
  evaluate to zero at $q=\ee^{2\pi\ii/m}$ by Lemma~\ref{lemma:q-lucas},
  because we always have~$r=0$ and~$s=e\geq 1$,
  and thus~$\poincare{m}{2}{e}(\ee^{2\pi\ii/m})=0$.

  If $e=m$,
  we find
  \begin{equation}
    \begin{aligned}
      \poincare{m}{2}{m}(\ee^{2\pi\ii/m})
      &=\frac{1}{\ee^{2\pi\ii/m}(\ee^{2\pi\ii/m}-1)}\left(\frac{1}{\ee^{2\pi\ii/m}+1}\cdot 2-1\right) \\
      &=-\frac{1}{\ee^{2\pi\ii/m}(\ee^{2\pi\ii/m}+1)},
    \end{aligned}
  \end{equation}
  as for~$k=0$ in the summation in Lemma~\ref{lemma:harder-narasimhan-recursion}
  we have~$\qbinom{m}{0}_{\ee^{2\pi\ii/m}}=\qbinom{m}{m}_{\ee^{2\pi\ii/m}}=1$ by definition,
  whilst all other Gaussian binomial coefficients in the sum
  evaluate to zero at $q=\ee^{2\pi\ii/m}$ by Lemma~\ref{lemma:q-lucas}
  because for~$k\geq 1$ we will have~$r=0$ and~$s\geq 1$,
  whereas~$\qbinom{2m}{m}_{\ee^{2\pi\ii/m}}=\binom{2}{1}\qbinom{0}{0}_{\ee^{2\pi\ii/m}}=2$
  by another application of Lemma~\ref{lemma:q-lucas}.
  This is strictly imaginary for $m\geq 5$.
\end{proof}

\begin{proof}[Proof of Theorem \ref{theorem:non-semisimple}]
  By Lemma~\ref{lemma:strictly-imaginary}
  we have that~$\poincare{m}{2}{m}(\ee^{2\pi\ii/m})$ is strictly imaginary (and thus nonzero).
  By Lemma~\ref{lemma:integer-valued}, this invalidates the criterion of Theorem~\ref{mainthm:ss},
  and thus the small quantum cohomology of~$\kronecker{m}{2}{m}$ is not generically semisimple.
\end{proof}

Let us illustrate what happens in the smallest example, where~$m=5$.
\begin{example}
  The Kronecker moduli space~$\kronecker{5}{2}{5}$ is a~22-dimensional smooth projective Fano variety,
  of index~5.
  Using the algorithm for Betti numbers of \cite[Corollary~6.9]{MR1974891},
  and implemented in~\cite{quivertools},
  or by working out the special case from Lemma~\ref{lemma:harder-narasimhan-recursion},
  we obtain that the even Betti numbers are
  \begin{equation}
    \label{equation:even-betti}
    1, 1, 3, 4, 8, 11, 17, 22, 30, 35, 41, 41, 41, 35, 30, 22, 17, 11, 8, 4, 3, 1, 1.
  \end{equation}
  Hence,
  the index-periodic even Betti numbers~$\tilde{\mathrm{b}}_0,\ldots,\tilde{\mathrm{b}}_4$
  are
  \begin{equation}
    78, 77, 78, 77, 77.
  \end{equation}
  We see that~$\tilde{\mathrm{b}}_2>\tilde{\mathrm{b}}_4$,
  hence by Theorem~\ref{mainthm:ss}
  we obtain that~$\QH(\kronecker{5}{2}{5})$ cannot be generically semisimple.
\end{example}

\paragraph{Acknowledgements}
P.~B.~was supported by
NWO (Dutch Research Council)
as part of the grant
\href{https://doi.org/10.61686/RZKLF82806}{\texttt{doi:10.61686/RZKLF82806}}.
M.~R.~thanks
University D\"usseldorf for the kind hospitality.
We would like to thank Maxim Smirnov
for interesting discussions,
and for asking a question about horospherical varieties,
which led us to consider the case of Kronecker moduli.

{\ } \\
\footnotesize
\indent\textsc{Mathematical Institute, Utrecht University, Budapestlaan 6, 3584CD Utrecht, Netherlands} \\
\indent\emph{Email address}: \url{p.belmans@uu.nl} \\

\textsc{Faculty of Mathematics, Ruhr-Universit\"at Bochum, Universit\"atsstra\ss e 150, 44780 Bochum, Germany} \\
\indent\emph{Email address}: \url{markus.reineke@ruhr-uni-bochum.de}

\end{document}